
\documentclass{daj}

\usepackage{amsmath,amsthm,amsfonts,amssymb,xcolor,enumitem,mathscinet}
\usepackage{graphicx}
\usepackage{mathtools}
\usepackage{etoolbox}

\definecolor{PTblue}{HTML}{0077BB}
\definecolor{PTteal}{HTML}{009988}
\definecolor{PTmagenta}{HTML}{EE3377}

\newtheorem{theorem}{Theorem}[section]
\newtheorem{lemma}[theorem]{Lemma}
\newtheorem{corollary}[theorem]{Corollary}

\theoremstyle{definition}
\newtheorem{definition}[theorem]{Definition}

\theoremstyle{remark}
\newtheorem{remark}[theorem]{Remark}
\newtheorem{conjecture}[theorem]{Conjecture}
\newtheorem{example}[theorem]{Example}

\newcommand{\RR}{\mathbb{R}}
\newcommand{\QQ}{\mathbb{Q}}
\newcommand{\ZZ}{\mathbb{Z}}
\newcommand{\MM}{\mathbb{M}}
\newcommand{\XX}{\mathbb{X}}
\newcommand{\YY}{\mathbb{Y}}

\newcommand{\HH}{\mathbb{H}}
\newcommand{\EE}{\mathbb{E}}
\newcommand{\PP}{\mathbb{P}}
\newcommand{\Haus}{\mathcal{H}}
\newcommand{\Pack}{\mathcal{P}}

\newcommand{\interior}{\mathop\mathrm{int}\nolimits}
\newcommand{\dist}{\mathop\mathrm{dist}\nolimits}
\newcommand{\diam}{\mathop\mathrm{diam}\nolimits}
\newcommand{\side}{\mathop\mathrm{side}\nolimits}

\newcommand{\spt}{\mathop\mathrm{spt}\nolimits}

\newcommand{\res}{\hbox{ {\vrule height .22cm}{\leaders\hrule\hskip.2cm} }} 
\newcommand{\gap}{\mathop\mathrm{gap}\nolimits}
\newcommand{\muse}{\mu_{\mathbf{s}}}
\newcommand{\leaves}{\mathsf{Leaves}}
\newcommand{\Child}{\mathsf{Child}}
\newcommand{\Outer}{\mathsf{Outer}}
\newcommand{\Inner}{\mathsf{Inner}}
\newcommand{\Top}{\mathop\mathsf{Top}}
\newcommand{\radius}{\mathop\mathrm{radius}\nolimits}
\newcommand{\dash}{{\!-\!}}
\newcommand{\essinf}{\mathop\mathrm{ess\,inf}}
\newcommand{\esssup}{\mathop\mathrm{ess\,sup}}

\numberwithin{figure}{section}
\numberwithin{equation}{section}

\dajAUTHORdetails{%
  title = {Square Packings and Rectifiable Doubling Measures}, 
  author = {Matthew Badger and Raanan Schul},
  plaintextauthor = {Matthew Badger, Raanan Schul},
    %
    %
    %
    %
    %
   %
  keywords = {doubling measures, rectifiable measures, Lipschitz images, square packings, Cantor sets, quasi-Bernoulli measures, Hausdorff dimension, packing dimension, Assouad dimension},
}   

\dajEDITORdetails{%
   year={2025},
   number={3},
   received={6 September 2023},   
   published={9 May 2025},  
   doi={10.19086/da.137798},       
}   

\begin{document}

\begin{frontmatter}[classification=text]

\title{Square Packings and Rectifiable Doubling Measures} 

\author[badger]{Matthew Badger\thanks{Supported by NSF DMS grant 2154047.}}
\author[schul]{Raanan Schul\thanks{Supported by NSF DMS grant 2154613.}}

\begin{abstract}
We prove that for all integers $2\leq m\leq d-1$, there exist doubling measures on $\mathbb{R}^d$ with full support that are $m$-rectifiable and purely $(m-1)$-unrectifiable in the sense of Federer (i.e.~without assuming $\mu\ll\mathcal{H}^m$). The corresponding result for 1-rectifiable measures is originally due to Garnett, Killip, and Schul (2010). Our construction of higher-dimensional Lipschitz images is informed by a simple observation about square packing in the plane: $N$ axis-parallel squares of side length $s$ pack inside of a square of side length $\lceil N^{1/2}\rceil s$. The approach is robust and when combined with standard metric geometry techniques allows for constructions in complete Ahlfors regular metric spaces. One consequence of the main theorem is that for each $m\in\{2,3,4\}$ and $s<m$, there exist doubling measures $\mu$ on the Heisenberg group $\mathbb{H}^1$ and Lipschitz maps $f:E\subset\mathbb{R}^m\rightarrow\mathbb{H}^1$ such that $\mu\ll\mathcal{H}^{s-\epsilon}$ for all $\epsilon>0$, $f(E)$ has Hausdorff dimension $s$, and $\mu(f(E))>0$. This is striking, because $\Haus^m(f(E))=0$ for every Lipschitz map $f:E\subset\mathbb{R}^m\rightarrow\mathbb{H}^1$ by a theorem of Ambrosio and Kirchheim (2000). Another application of the square packing construction is that every compact metric space $\XX$ of Assouad dimension strictly less than $m$ is a Lipschitz image of a compact set $E\subset[0,1]^m$. Of independent interest, we record the existence of doubling measures on complete Ahlfors regular metric spaces with prescribed lower and upper Hausdorff and packing dimensions.
\end{abstract}
\end{frontmatter}



\section{Introduction}\label{s:intro}

In geometric measure theory, a fundamental problem is to detect the interaction of measures in $\RR^d$ or metric space $\XX$ with various canonical families of lower-dimensional sets such as rectifiable curves or $C^1$ submanifolds; see the surveys \cite{ident} and \cite{rectifiability-survey} for a detailed introduction to this topic and \cite{AM-JFA,Bate-tangents,BHS, KL-higher-co-dim-UR} for some of the latest advances. Questions that we might ask include: When does every set in the family have measure zero? When does some set in the family have positive measure? Classically, this problem was exclusively investigated within the class of Radon\footnote{On any \emph{proper} metric space $\XX$, i.e.~a metric space in which closed balls are compact, a \emph{Radon measure} $\mu$ is a Borel measure that is finite on bounded sets; see e.g.~\cite[Chapter 7]{Folland}.} measures $\mu$ on $\RR^d$ satisfying \begin{equation}\label{upper-density} 0<\limsup_{r\downarrow 0} \frac{\mu(B(x,r))}{r^m} <\infty\quad\text{$\mu$-a.e.},\end{equation} where $1\leq m\leq d-1$ is the dimension of the model sets and $B(x,r)$ denotes the closed ball with center $x$ and radius $r$; some highlights include \cite{MR44,Fed47,Preiss}. In particular, any measure satisfying \eqref{upper-density} is strongly $m$-dimensional in the sense that $\mu(E)=0$ for every set $E$ with $\Haus^m(E)=0$, but there exists a set $F$ such that $\Haus^m\res F$ is $\sigma$-finite and $\mu(\RR^d\setminus F)=0$; see e.g.~\cite[\S2.2]{BLZ}. For the definition of the $s$-dimensional Hausdorff measure $\Haus^s$ and its basic properties, see e.g.~\cite{Rogers} or \cite{Mattila}. Over the last decade, there has emerged an effort to study the concept of rectifiability within the larger class of arbitrary locally finite measures, without imposing the condition \eqref{upper-density}. We now possess complete pictures of the interaction of Radon measures in $\RR^d$ with rectifiable curves \cite{BS3} (also see \cite{Lerman,GKS,BS1,AM-rectifiable,BS2,MO-curves,BLZ}) and with $m$-dimensional Lipschitz graphs \cite{Badger-Naples,Dabrowski-cones} for arbitrary $1\leq m\leq d-1$. In both cases, the general solution was obtained after first studying the problem for doubling measures.

In this paper, we report some initial progress on the problem of testing when a Radon measure assigns full measure to a countable family of $m$-dimensional Lipschitz images. We leverage a new construction of Lipschitz maps from subsets of $\RR^m$, $m\geq 2$ (see \S\ref{s:Lipschitz}). Consistent with the convention used by Federer \cite[\S3.2.14]{Federer}, we say that a Borel measure $\mu$ on a metric space $\XX$ is \emph{$m$-rectifiable} if there exist bounded sets $E_i\subset\RR^m$ and Lipschitz maps $f_i:E_i\rightarrow\XX$ such that $\mu(\XX\setminus\bigcup_1^\infty f_i(E_i))=0$. We say that $\mu$ is \emph{purely $m$-unrectifiable} if $\mu(f(E))=0$ for every bounded set $E\subset\RR^m$ and Lipschitz map $f:E\rightarrow\XX$. By standard Lipschitz extension theorems, when $\XX=\RR^d$, one may replace the domain of the maps with $[0,1]^m$ or $\RR^m$ or with arbitrary sets $E\subset\RR^m$ without changing the class of rectifiable measures. Note that every $m$-rectifiable measure is automatically $(m+1)$-rectifiable by considering maps of the form $F(x_1,\dots,x_m,x_{m+1})=f(x_1,\dots,x_m)$. A measure $\mu$ on $\XX$ is \emph{doubling} if \eqref{doubling} below holds for some constant $1\leq D<\infty$. We say that $\XX$ is \emph{Ahlfors $q$-regular} if $q\in[0,\infty)$ and there exists a constant $C>1$ such that $C^{-1}r^q\leq \Haus^q(B(x,r))\leq C r^q$ for all $x\in\XX$ and $0<r<\diam X$. Our goal is to prove:

\begin{theorem}\label{t:main} Let $\XX$ be a complete Ahlfors $q$-regular metric space. For all integers $m\geq 1$ with $q>m-1$, there exists a doubling measure $\mu$ on $\XX$ such that $\mu$ is $m$-rectifiable and purely $(m-1)$-unrectifiable.

More precisely, for all integers $m\geq 1$ with $q>m-1$ and for all real-valued dimensions $0<s_H\leq s_P<q$ with $m-1<s_P< m$, we can find a Radon measure $\mu$ on $\XX$ and a constant $1\leq D<\infty$ depending only on $\XX$, $s_H$, and $s_P$ such that
\begin{equation}\label{doubling} 0<\mu(B(x,2r))\leq D\mu(B(x,r))<\infty\quad\text{for all $x\in\XX$ and $r>0$};\end{equation}
\begin{equation}\label{haus-dim-s}\liminf_{r\downarrow 0} \frac{\log\mu(B(x,r))}{\log r}=s_H\quad\text{at $\mu$-a.e.~$x\in\XX$;}\end{equation}
\begin{equation}\label{pack-dim-s}\limsup_{r\downarrow 0} \frac{\log\mu(B(x,r))}{\log r}=s_P\quad\text{at $\mu$-a.e.~$x\in\XX$;}\end{equation}
\begin{equation}\label{unrectifiable}\text{$\mu(f(E))=0$ whenever $f:E\subset\RR^{m-1}\rightarrow\XX$ is Lipschitz; }\end{equation}
\begin{equation}\label{m-rectifiable}\text{there exist Lipschitz maps $f_i:E_i\subset\RR^m\rightarrow\XX$ such that }\mu\left(\XX\setminus \textstyle\bigcup_1^\infty f_i(E_i)\right)=0.\end{equation} When $\XX=\RR^d$ and $1\leq m\leq d-1$, we also know that \begin{equation}\label{bi-lip-vanish}\text{$\mu(g(\RR^m))=0$ whenever $g:\RR^m\rightarrow \RR^d$ is a bi-Lipschitz embedding.}\end{equation}
\end{theorem}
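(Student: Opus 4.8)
The plan is to derive all of \eqref{doubling}--\eqref{m-rectifiable} from one master construction in $\XX$ together with a twin construction in $[0,1]^m$, and then to read off \eqref{unrectifiable} and \eqref{bi-lip-vanish} from \eqref{pack-dim-s}. First I would fix a Christ--David dyadic filtration of the complete Ahlfors $q$-regular space $\XX$ and, inside it, a nested family $\mathcal{Q}_0\supset\mathcal{Q}_1\supset\cdots$ of roughly round ``cubes'': each $Q\in\mathcal{Q}_k$ has diameter $\asymp\ell_k$ and is assigned exactly $N_k$ well-separated children in $\mathcal{Q}_{k+1}$ of diameter $\asymp\ell_{k+1}$, with pairwise gaps $\gtrsim\ell_{k+1}$; Ahlfors regularity provides such children, together with residual ``gap'' regions, as long as $N_k\lesssim(\ell_k/\ell_{k+1})^q$. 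Put $\mu_0$ on $K=\bigcap_k\bigcup\mathcal{Q}_k$ by equidistributing mass among children. The parameters must do three jobs at once: fit inside $\XX$; realize the prescribed Hausdorff and packing exponents; and admit a Lipschitz parametrization by a subset of $\RR^m$. I would take a single large integer $\bar N$ and let each step be either a ``Hausdorff step'' with $N_k=\bar N$ and $\ell_{k+1}/\ell_k=\bar N^{-1/s_H}$ or a ``packing step'' with $N_k=\bar N$ and $\ell_{k+1}/\ell_k=\bar N^{-1/s_P}$, so that the step's marginal slope $-\log\bar N/\log(\ell_{k+1}/\ell_k)$ is exactly $s_H$ or $s_P$; since $s_H\le s_P<q$, both ratios are $\le\bar N^{-1/q}$, so the fitting bound holds for $\bar N$ large.

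The key point is the Lipschitz map, and this is exactly where the cube-packing observation of \S\ref{s:Lipschitz} and the hypothesis $s_P<m$ enter. I would build a companion Cantor set $E\subset[0,1]^m$ in which a generation-$k$ cube of side $L_k$ holds its $N_k$ children, of side $L_{k+1}$, inside a $\lceil N_k^{1/m}\rceil^m$ subgrid; by the packing observation this is possible with $L_{k+1}\asymp L_k N_k^{-1/m}$ and sibling gaps again $\asymp L_{k+1}$. Let $f\colon E\to K$ send generation-$k$ cubes to generation-$k$ cubes compatibly with nesting; completeness of $\XX$ makes $f$ well-defined and continuous. If $x\ne y$ lie in $E$ with deepest common generation $k$, then $|f(x)-f(y)|\le\diam Q\asymp\ell_k$ while $|x-y|\gtrsim\gap_k\asymp L_{k+1}$, so $f$ is $C$-Lipschitz with $C\asymp\sup_k \ell_k/L_{k+1}$. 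Telescoping, $\ell_k/L_{k+1}\asymp c_0^{-k}\,N_k^{1/m}\prod_{j<k}\bigl((\ell_{j+1}/\ell_j)N_j^{1/m}\bigr)$, where $c_0<1$ is the fixed grid constant, and each factor $(\ell_{j+1}/\ell_j)N_j^{1/m}=\bar N^{1/m-1/s_{(j)}}$ with $s_{(j)}\in\{s_H,s_P\}$; because $s_{(j)}\le s_P<m$ the exponent $1/m-1/s_{(j)}$ is negative, so each factor is $<1$ and, for $\bar N$ large enough (depending only on $m,s_H,s_P,c_0$), smaller than $c_0^2$, whence $\ell_k/L_{k+1}\to0$ and $C<\infty$. (A one-dimensional ``queue'' of children would replace $N_j^{1/m}$ by $N_j$ and would only work for $s_P<1$; the extra dimensions of the domain, exploited through square packing, are precisely what raises the threshold to $s_P<m$.) Iterating in rescaled copies produces the countably many Lipschitz images for \eqref{m-rectifiable}.

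Next I would fix the remaining choices and patch to full support. Arrange the Hausdorff and packing steps in alternating runs whose lengths grow fast (with each packing run long enough to outweigh the heavier weights $\tfrac1{s_H}\log\bar N$ of the preceding Hausdorff steps), so that the running weighted average of the marginal slopes over the first $k$ steps has $\liminf=s_H$ and $\limsup=s_P$; a short computation shows that $\log\mu(B(x,r))/\log r$ tracks this running average up to errors that vanish as $r\downarrow0$ (inside a single step $\mu(B(x,r))$ alternately stays flat and grows like $r^q$, but over a scale interval of bounded logarithmic width), giving \eqref{haus-dim-s} and \eqref{pack-dim-s} at $\mu$-a.e.\ point. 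To make $\spt\mu=\XX$ I would recursively insert into the gap region of each cube a rescaled copy of the whole construction carrying a tiny fraction (say $\bar N^{-10}$) of that cube's mass; since every copy uses the same parameters, this preserves the Lipschitz constants, leaves the $\liminf/\limsup$ local dimensions unchanged (the inserted mass is of lower order), and yields a measure $\mu$ that is doubling with constant $D=D(\XX,s_H,s_P)$. Checking \eqref{doubling} across gap boundaries and across the fast scale $\ell_{k+1}\ll\ell_k$ inside a step is the most bookkeeping-intensive point, but it is routine given the Ahlfors regularity of $\XX$.

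Finally I would deduce \eqref{unrectifiable} and \eqref{bi-lip-vanish}. From \eqref{pack-dim-s}, $\limsup_{r\downarrow0}\log\mu(B(x,r))/\log r=s_P$ $\mu$-a.e., so for every $s<s_P$ one has $\liminf_{r\downarrow0}\mu(B(x,r))/r^{s}=0$ a.e., and the standard comparison theorem for packing measures shows that $\mu$ vanishes on every set of finite $s$-dimensional packing measure. Since a Lipschitz image of a bounded $E\subset\RR^{m-1}$ has packing dimension $\le m-1<s_P$, choosing any $s\in(m-1,s_P)$ gives $\mu(f(E))=0$, i.e.\ \eqref{unrectifiable}; indeed $\mu$ kills every set of packing dimension $\le m-1$. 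The hard part will be \eqref{bi-lip-vanish}: a bi-Lipschitz image $g(\RR^m)\subset\RR^d$ has packing dimension $m>s_P$, so the argument above does not apply, and I would instead run the $\RR^d$ version of the construction with an extra rigidity — using the codimension $d-m\ge1$ to place the children at every \emph{packing} step so that $\spt\mu$ stays uniformly far, in normalized Hausdorff distance, from every $m$-dimensional affine plane. Then, were $\mu(g(\RR^m))>0$, at a $\mu$-density point $x$ of $g(\RR^m)$ the bulk of $\mu$ near $x$ would sit on $g(\RR^m)$ and be uniformly non-flat along the packing scales descending to $x$, which I would play against the uniform rectifiability of bi-Lipschitz images of $\RR^m$ (flat at ``most'' scales in a Carleson sense) to produce a common scale at which $g(\RR^m)$ is both flat and non-flat — a contradiction. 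Making this clash bite at a genuine $\mu$-density point, which need not be $\Haus^m$-typical on $g(\RR^m)$, is the delicate step; and the restriction $m\le d-1$ is essential, since for $d=m$ every bi-Lipschitz $g\colon\RR^m\to\RR^m$ is onto and $\mu(g(\RR^m))=\mu(\RR^d)>0$.
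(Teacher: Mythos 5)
Your overall architecture---build a Cantor set first, parameterize it by square packing, then sprinkle tiny copies into the gaps to get full support---has two genuine failures, and both are exactly the points the paper's quasi-Bernoulli machinery (\S\ref{s:bernoulli}) exists to solve. First, \eqref{doubling}: your inserted gap copies restart their own Hausdorff/packing schedule and carry a flat $\bar{N}^{-10}$ fraction of the containing cube's mass, so at the interface between a gap copy and a child of the main construction the two local masses at a common small scale $r$ are governed by offset schedules; over long runs one side scales like $r^{s_H}$ while the other scales like $r^{s_P}$, so their ratio oscillates unboundedly as $r\downarrow 0$. Since after patching both supports reach the interface, you can center $B(x,r)$ on the light side with $B(x,2r)$ catching the heavy side, and the doubling ratio blows up: this is not ``routine bookkeeping'' but the crux, and the paper handles it by using a single global schedule of target dimensions $s_k$ for all cubes and by assigning every child near the boundary of its parent the same outer weight $\alpha$ independent of the parent (Lemma \ref{l:2AR-count} and the ``doubling on small radii'' step of Lemma \ref{qb-doubling}). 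Second, \eqref{m-rectifiable}: diverting a constant fraction $\bar{N}^{-10}$ to a gap copy at every generation means the mass a single copy retains on its own Cantor set after $k$ generations is $(1-\bar{N}^{-10})^k\mu(Q_0)\to 0$; hence $\mu$-a.e.\ point falls into nested gap copies infinitely often and the union, over the countably many copies, of the sets you actually parameterize is $\mu$-null. The paper never separates ``structural'' from ``sprinkled'' mass: the measure has full support from the start, and for each $\theta<1$ the pruned subtree of descendants that are central children at least a fixed fraction of the time captures measure $\geq\theta\mu(Q_0)$ by Azuma's inequality (Lemma \ref{l:measure-t}) while having few enough children (Lemma \ref{l:KQ-bound}) to satisfy the summability hypothesis of Theorem \ref{t:pack}. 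This measure-concentration step is the heart of the rectifiability proof and is missing from your proposal.

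Third, you declare \eqref{bi-lip-vanish} the hard part and leave the key step of your flatness-versus-nonflatness argument unresolved, but once \eqref{doubling} holds with full support it is essentially free: $g(\RR^m)$ is Ahlfors $m$-regular, so its Assouad dimension is $m\leq d-1<d$, hence it is porous, and a doubling measure with support all of $\RR^d$ vanishes on porous sets by the Lebesgue--Besicovitch differentiation theorem (this is \S\ref{ss:porous}; the introduction even remarks that \eqref{bi-lip-vanish} holds for \emph{every} doubling measure on $\RR^d$, which is why bi-Lipschitz methods cannot prove the theorem). What you do get right matches the paper's mechanism: \eqref{unrectifiable} from \eqref{pack-dim-s} since $s_P>m-1$, and the square-packing telescoping in which each factor $N_j^{1/m}\,(\ell_{j+1}/\ell_j)$ is less than one precisely because $s_P<m$, which is the same reason the series in Theorem \ref{t:pack} converges in \S\ref{ss:simple}. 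But as written, \eqref{doubling}, \eqref{m-rectifiable}, and \eqref{bi-lip-vanish} are not established.
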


The existence of a Radon measure $\mu$ on $\RR^d$ satisfying \eqref{doubling}, \eqref{m-rectifiable}, and \eqref{bi-lip-vanish} with $m=1$ is due to Garnett, Killip, and Schul \cite{GKS} (see the sentence ``In closing...'' on p.~1678). There is vast ocean between the cases $m=1$ and $m\geq 2$ insofar as a simple metric characterization\footnote{Wa\.zewski's Theorem: Let $\XX$ be a metric space and let $\Gamma\subset \XX$ be nonempty. Then $\Gamma=f([0,1])$ for some Lipschitz map $f:[0,1]\rightarrow\XX$ if and only if $\Gamma$ is compact, connected, and $\Haus^1(\Gamma)<\infty$. See \cite{AO-curves}.} of Lipschitz curves has been known since the 1920s, but no such result is available for higher-dimensional Lipschitz images. The best parameterization method for $m$-dimensional surfaces currently available when $m\geq 2$ is David and Toro's bi-Lipschitz variant of the Reifenberg algorithm \cite{DT,ENV-Banach}, but a bi-Lipschitz technique is useless for proving Theorem \ref{t:main} because \eqref{bi-lip-vanish} holds for any measure on $\RR^d$ satisfying the doubling property \eqref{doubling}. Property \eqref{haus-dim-s} is equivalent to the statement that there exists a Borel set $F\subset\XX$ of Hausdorff dimension $s_H$ such that $\mu(\XX\setminus F)=0$ and $\mu(E)=0$ whenever $E\subset\XX$ is a Borel set of Hausdorff dimension less than $s_H$. Property \eqref{pack-dim-s} is equivalent to the same assertion with Hausdorff dimension replaced by packing dimension (see \S\ref{ss:dimension}). Since Lipschitz maps do not increase packing dimension, \eqref{pack-dim-s} implies \eqref{unrectifiable} when $s_P>m-1$.

\begin{example}\label{big-list-of-examples} We emphasize that Theorem \ref{t:main}
makes no assumptions on the connectedness properties of the metric space. \begin{enumerate}
\item There exist doubling measures $\mu$ on $\RR^3$ of Hausdorff dimension $s_H=0.0001$ and packing dimension $s_P=1.9999$ that are 2-rectifiable and purely 1-unrectifiable. In fact, the measures can take the form of generalized Bernoulli products (see \S\ref{s:bernoulli}).
\item Any compact self-similar set of Hausdorff dimension $q$ in $\RR^d$ that satisfies the open set condition is Ahlfors $q$-regular and supports a $\lceil q\rceil$-rectifiable doubling measure that is purely $(\lceil q\rceil-1)$-unrectifiable. This large family of examples includes Cantor sets, which are totally disconnected.
\item The Koch snowflake curve in $\RR^2$ contains no non-trivial rectifiable subcurves, but is Ahlfors $\log_3(4)$-regular. Thus, the snowflake curve supports $1$-rectifiable doubling measures of Hausdorff and packing dimension $1-\epsilon$ for any $\epsilon>0$.
\item When $I=[0,1]^m$ is equipped with the \emph{snowflake metric} $d(x,y)=|x-y|^{m/s}$ for some $s>m$, the space $I$ is Ahlfors $s$-regular and $\Haus^s\res I$ is purely $m$-unrectifiable (because $s>m$). Nevertheless, $I$ supports an $m$-rectifiable doubling measure that is purely $(m-1)$-unrectifiable.
\item The first Heisenberg group $\mathbb{H}^1$ is a nonabelian Carnot group that is topologically equivalent to $\RR^3$, but equipped with a metric so that $\mathbb{H}^1$ has Hausdorff dimension $4$ and is Ahlfors $4$-regular. By \cite[Theorem 7.2]{AK}, the Hausdorff measures $\Haus^m\res\HH^1$ are purely $m$-unrectifiable for all $m\in\{2,3,4\}$. For further results on non-embedding of $\RR^m$ into Heisenberg groups, see e.g.~\cite{HS-Gromov-nonembedding} and the references therein. Even so, for all $m\in\{2,3,4\}$ and $s<m$, there exist doubling measures $\mu$ on $\mathbb{H}^1$ and Lipschitz maps $f:E\subset\RR^m\rightarrow\HH^1$ such that $\mu\ll\Haus^{s-\epsilon}$ for all $\epsilon>0$, $\dim_H f(E)=s$, and $\mu(f(E))>0$. That is, doubling measures on $\HH^1$ can charge Lipschitz images of Euclidean spaces of almost maximal dimension.
\item Let $\XX=L\cup Q$ be the union of a line segment $L=[0,1]\times\{0\}^2$ and a cube $Q=[1,2]\times[0,1]^2$, equipped with the subspace metric from $\RR^3$. While the space $\XX$ is not Ahlfors $q$-regular for any $q$, it is complete, doubling, and $\dim_H\XX=3$. Since $Q$ is Ahlfors $3$-regular, there exists a doubling measure $\nu$ with $\spt\nu= Q$ that is $2$-rectifiable and purely 1-unrectifiable. The measure $\mu:=\nu+\Haus^1\res L$ has $\spt\mu=\XX$, $\mu$ is 2-rectifiable, and $\mu$ is not $1$-rectifiable. However, $\mu$ is not purely $1$-unrectifiable, because $\mu\res L$ is (trivially!)~$1$-rectifiable.
\end{enumerate}\end{example}

The following seems plausible, but is beyond the scope of the present paper. Also see Conjecture \ref{all-d} for a related open problem.

\begin{conjecture} Theorem \ref{t:main} also holds when $s_P=m-1$, $s_P=m$, or $s_P=q$.\end{conjecture}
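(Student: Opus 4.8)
The three endpoint values of $s_P$ behave very differently, and I would handle them by separate arguments; in each case the targets are \eqref{doubling}--\eqref{m-rectifiable} (together with \eqref{bi-lip-vanish}, which costs nothing since it holds for every doubling measure on $\RR^d$), and I will freely use the reformulations of \eqref{haus-dim-s} and \eqref{pack-dim-s} in terms of Borel sets of prescribed Hausdorff, resp.\ packing, dimension. The coincident sub-cases $s_H=s_P$ at these endpoints are included in what follows.

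\emph{Case $s_P=q$.} Keeping the constraint $m-1<s_P<m$, this forces $m-1<q<m$, so $m=\lceil q\rceil$ and $\XX$ has Assouad dimension $q<m$ (an Ahlfors $q$-regular space has Assouad dimension $q$: the bound $\le q$ comes from the standard disjoint-ball estimate and the bound $\ge q$ from $\overline{\dim}_{\mathrm{box}}=q$). Since complete Ahlfors regular spaces are proper, write $\XX=\bigcup_k\overline B(x_0,k)$ with each ball compact; applying the theorem announced in the introduction — every compact metric space of Assouad dimension strictly less than $m$ is a Lipschitz image of a compact subset of $[0,1]^m$ — we get $\XX=\bigcup_k f_k(E_k)$ with $E_k\subset[0,1]^m$ compact and $f_k$ Lipschitz, so \emph{every} Radon measure on $\XX$ is automatically $m$-rectifiable. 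It therefore suffices to produce a doubling measure of full support realizing \eqref{haus-dim-s} and \eqref{pack-dim-s} with the given $s_H\le s_P=q$: take $\Haus^q\!\res\XX$ when $s_H=q$, and otherwise the prescribed-dimension doubling-measure construction recorded in this paper (built from the generalized Bernoulli products of \S\ref{s:bernoulli}) run with upper packing dimension at the maximal value $q$ — a sub-polynomial variant of the interpolation already carried out there, the upper endpoint being the easy one, realized by behaving like $\Haus^q$ along a sparse sequence of scales. Pure $(m-1)$-unrectifiability holds because $\dim_P\mu=q>m-1$ and Lipschitz maps do not raise packing dimension.

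\emph{Case $s_P=m-1$ (so $m\ge2$ and $q>m-1$).} Here the $m$-rectifiability half of Theorem \ref{t:main} applies unchanged — its square-packing construction of \S\ref{s:Lipschitz} needs only $s_P\le m$, never $s_P>m-1$ (and if in addition $q<m$, $m$-rectifiability is again free, as above) — so only \eqref{unrectifiable} is at issue, the implication ``$\dim_P\mu>m-1\Rightarrow$ \eqref{unrectifiable}'' having just broken. I would instead arrange that $\limsup_{r\downarrow0}\mu(B(x,r))/r^{m-1}=\infty$ at $\mu$-a.e.\ $x$, which forces $\mu\perp\Haus^{m-1}$; since $\Haus^{m-1}(f(E))\le(\mathrm{Lip}\,f)^{m-1}\Haus^{m-1}(E)<\infty$ for every bounded $E\subset\RR^{m-1}$ and Lipschitz $f$, this yields $\mu(f(E))=0$, hence \eqref{unrectifiable}. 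When $s_H<m-1$ the blow-up is automatic from \eqref{haus-dim-s}, since $\liminf\log\mu(B(x,r))/\log r=s_H<m-1$ forces $\mu(B(x,r))/r^{m-1}=r^{s_H-(m-1)+o(1)}\to\infty$ along a sequence $r\downarrow0$. When $s_H=s_P=m-1$ one instead inflates the Bernoulli weights by a slowly varying, sub-polynomial factor (of size $|\log r|^{c}$) along a sparse sequence of scales: this leaves $\log\mu(B(x,r))/\log r\to m-1$ — so \eqref{haus-dim-s} and \eqref{pack-dim-s} hold with $s_H=s_P=m-1$ — but sends $\mu(B(x,r))/r^{m-1}\to\infty$ along that sequence. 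The only care needed is to keep the perturbed weights slowly varying so that \eqref{doubling} survives.

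\emph{Case $s_P=m$ (so $q>m$) — the main obstacle.} Now $\XX$ has Assouad dimension $q>m$, so $m$-rectifiability genuinely relies on the square-packing maps, and there is no positive budget $m-s_P$ for the inefficiency of the packing: where the strict regime could afford a fixed amount of ``wasted'' dimension per stage, here it must be driven to zero. For $m=1$ there is nothing to prove — $\lceil N^{1/1}\rceil=N$, the packing is lossless, and the construction already reaches $s_P=1$. For $m\ge2$ the plan is to re-run the construction using a rapidly increasing number $N_k\to\infty$ of children at the $k$-th packing stage, chosen so that $\sum_k N_k^{-1/m}<\infty$; then the one-stage inflation $\lceil N_k^{1/m}\rceil/N_k^{1/m}=1+O(N_k^{-1/m})$ has convergent product, so the stage maps still compose — after the usual metric-geometry gluing — to a Lipschitz map on a compact $E\subset\RR^m$, while along the packing stages the running exponent $\sum_{j<k}\log N_j\big/\sum_{j<k}\log\lceil N_j^{1/m}\rceil$ stays $\le m$ (because $\lceil N^{1/m}\rceil^m\ge N$) and tends to $m$; interleaving ordinary low-dimensional stages to pull the lower exponent down to $s_H$ then gives precisely \eqref{haus-dim-s} and \eqref{pack-dim-s} with $s_H\le s_P=m$, and pure $(m-1)$-unrectifiability once more follows from $\dim_P\mu=m>m-1$. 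The hard part — where I expect essentially all the work to lie — is reconciling $N_k\to\infty$ with the doubling property \eqref{doubling}: a packing stage with a huge number of children cannot be executed at a single scale, so each must be unfolded into many slowly varying sub-stages along a geometric cascade of scales (there is room for this precisely because the target is Ahlfors $q$-regular with $q>m$), all the while maintaining the square-packing layout in the $\RR^m$ domain so that the composed domain map stays Lipschitz, and keeping the dimension bookkeeping tight enough to land exactly on $s_H$ and on $s_P=m$ without overshooting.
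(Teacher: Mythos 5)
This statement is one of the paper's open conjectures---there is no proof in the paper to compare against---so your proposal can only be judged on its own terms, and as written it does not close any of the three endpoint cases. The decisive error is in the case $s_P=m-1$: you argue that forcing $\limsup_{r\downarrow0}\mu(B(x,r))/r^{m-1}=\infty$ at $\mu$-a.e.\ point gives $\mu\perp\Haus^{m-1}$ and hence $\mu(f(E))=0$ for every Lipschitz $f:E\subset\RR^{m-1}\to\XX$ with $E$ bounded. The last implication is false: singularity only says that $\mu$ is carried by an $\Haus^{m-1}$-null set, which is perfectly compatible with $\mu$ assigning positive mass to a Lipschitz image of a bounded subset of $\RR^{m-1}$ (such an image has finite, possibly zero, $\Haus^{m-1}$-measure). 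Indeed, the measures of Garnett--Killip--Schul and of this paper are exactly of this type---singular, with infinite upper density along sparse scales---and yet rectifiable; that is the whole point of working in Federer's sense without $\mu\ll\Haus^m$. The density condition that would give \eqref{unrectifiable} at the endpoint goes the \emph{other} way: vanishing upper $(m-1)$-density $\mu$-a.e.\ kills all sets of finite $\Haus^{m-1}$-measure (by the standard density comparison, cf.\ the covering argument behind Theorem \ref{t:mm}), whereas inflating the weights so that the density blows up is precisely the regime in which a doubling measure \emph{may} charge $(m-1)$-dimensional Lipschitz images. So your modification aims at the wrong target, and whether the quasi-Bernoulli measure with $s^*=m-1$ (or any doubling measure with $\overline{\dim}_P\,\mu=m-1$ and the other properties) is purely $(m-1)$-unrectifiable remains exactly the open question.

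The other two cases are also not established. For $s_P=q$ (so $m-1<q<m$): the subcase $s_H=s_P=q$ is indeed easy, since Theorem \ref{t:null} makes every Radon measure on $\XX$ $m$-rectifiable and $\Haus^q\res\XX$ does the rest; but for $s_H<s_P=q$ you need a doubling measure with $\underline{\dim}_H\,\mu=s_H<q=\overline{\dim}_P\,\mu$, and the paper states explicitly (see the paragraph preceding Conjecture \ref{all-d}) that this is not known---the construction of $\muse$ forces $b\uparrow\infty$ as $s^*\uparrow q$---so ``a sub-polynomial variant of the interpolation, behaving like $\Haus^q$ along a sparse sequence of scales'' is an assertion standing in for the missing construction, not an argument. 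For $s_P=m$ you correctly identify that \eqref{tau61}--\eqref{check-S} leave no room at $s_P/m=1$ and propose packing stages with $N_k\to\infty$, $\sum_k N_k^{-1/m}<\infty$, but you concede that the compatibility of such stages with \eqref{doubling} (unfolding one stage over a cascade of scales while keeping the outer weights scale-independent, as the proof of Lemma \ref{qb-doubling} requires), the validity of the concentration and counting estimates (Lemmas \ref{l:measure-t} and \ref{l:KQ-bound}) at exponent exactly $m$, and the dimension bookkeeping for \eqref{haus-dim-s}--\eqref{pack-dim-s} are all unresolved; that is where the entire difficulty lies, so this case is a plan rather than a proof.
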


Our construction of the Lipschitz maps from (subsets of) $\RR^m$ into metric spaces $\XX$ is informed by the following simple observation about square packings.

\begin{lemma}\label{root-packing} Let $m,k\geq 2$ be integers. Any collection of $k^m$ axis-parallel cubes in $\RR^m$ of descending side lengths $s_0\geq s_1\geq \cdots \geq s_k\geq s_{k+1}\geq\cdots \geq s_{k^m-1}$ can be packed inside an axis-parallel cube of side length \begin{equation}\label{sum-law} s=s_{0^m}+s_{1^m}+s_{2^m}+\cdots+s_{(k-1)^m}.\end{equation} When $k=2$, this is the best possible bound, independent of the values of $s_2,\dots,s_{2^m-1}$.
\end{lemma}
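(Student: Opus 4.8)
The plan is to pack the cubes greedily in "shells" organized by a base-$k$ coordinate system along one axis, and to bound the total width of each shell by the side length of its largest representative, which is exactly the term appearing in \eqref{sum-law}. Concretely, I would think of the target cube $[0,s]^m$ and subdivide the first coordinate into $k$ consecutive slabs $S_1,\dots,S_k$, where slab $S_j$ has width $w_j := s_{(j-1)^m}$, so that $\sum_j w_j = s$ by \eqref{sum-law}. Into slab $S_j$ I would place the cubes whose index $i$ satisfies $(j-1)^m \le i \le j^m - 1$, i.e.\ the $j^m-(j-1)^m$ cubes ranked from $(j-1)^m$ through $j^m-1$; note $\sum_{j=1}^k \big(j^m-(j-1)^m\big) = k^m$, so every cube is assigned to exactly one slab, and each cube in slab $S_j$ has side length at most $s_{(j-1)^m}=w_j$, hence fits in the first-coordinate direction. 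It remains to show that the $j^m-(j-1)^m$ cubes assigned to $S_j$ can be arranged, without overlap, inside the $(m-1)$-dimensional cross-section $[0,s]^{m-1}$ of the slab.

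For the cross-sectional packing I would induct on $m$. The base case $m=1$ is the trivial statement that $k^1=k$ intervals of descending lengths $s_0\ge\cdots\ge s_{k-1}$ stack end-to-end inside an interval of length $s_0+s_1+\cdots+s_{k-1}$. For the inductive step, observe that the cubes assigned to slab $S_j$ number $j^m-(j-1)^m$, which is at most $mk^{m-1}$, and more usefully each has side length at most $w_j=s_{(j-1)^m}$. Rather than treat the slabs separately, I think the cleanest route is to prove the lemma directly by induction on $m$ as follows: assume the result in dimension $m-1$. Split the $k^m$ given cubes into $k$ groups $G_1,\dots,G_k$ where $G_j$ consists of the cubes ranked $(j-1)k^{m-1}$ through $jk^{m-1}-1$ — that is, $G_j$ has exactly $k^{m-1}$ cubes, and every cube in $G_j$ has side length at most $s_{(j-1)k^{m-1}}$. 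By the $(m-1)$-dimensional case applied to $G_j$ (whose $k^{m-1}$ side lengths are in descending order), the cubes of $G_j$ pack into an $(m-1)$-cube of side length $\sigma_j := s_{(j-1)k^{m-1}} + s_{(j-1)k^{m-1}+1^{m-1}} + \cdots + s_{(j-1)k^{m-1}+(k-1)^{m-1}}$, and each such $\sigma_j \le k\cdot s_{(j-1)k^{m-1}} \le k\,s_{(j-1)^m}$ is at most the width we want to allot; stacking these $k$ slabs of thickness $s_{(j-1)k^{m-1}}\ge$ (the max side length in $G_j$) along the $m$-th axis gives a box of dimensions $(\max_j \sigma_j)^{m-1}\times(\sum_j s_{(j-1)k^{m-1}})$. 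The bookkeeping that makes all of these fit inside a single cube of side $s = s_{0^m}+s_{1^m}+\cdots+s_{(k-1)^m}$ is the delicate point, since $(j-1)k^{m-1}$ is generally smaller than $(j-1)^m$ and one must check the resulting bound is still dominated by \eqref{sum-law}; I would reconcile this by choosing the grouping so that group $G_j$ is assigned exactly the indices $i$ with $(j-1)^m\le i<j^m$ (there are $j^m-(j-1)^m \le mk^{m-1} \le k^{m-1}$ of them only when... ) — in fact the correct grouping uses the "layer" decomposition of $\{0,1,\dots,k^m-1\}$ into the $k$ sets $L_j=\{i: (j-1)^m\le i\le j^m-1\}$, and I would verify by an elementary counting argument (a telescoping/Abel-summation identity for $j^m-(j-1)^m$) that each $L_j$ splits further into at most $k-1$ sub-blocks each of size at most $k^{m-1}$, allowing the inductive hypothesis to be applied sub-block by sub-block within a slab of width $s_{(j-1)^m}$.

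\textbf{The main obstacle.} The genuine difficulty is the combinatorial bookkeeping in the inductive step: one must partition the sorted index set $\{0,\dots,k^m-1\}$ into pieces so that (i) each piece is small enough — at most $k^{m-1}$ cubes — to invoke the $(m-1)$-dimensional statement, and (ii) the resulting widths telescope to exactly $s_{0^m}+s_{1^m}+\cdots+s_{(k-1)^m}$ and no more. Getting both to hold simultaneously forces a specific recursive grouping scheme (essentially: recurse on the base-$k$ digits of the index), and verifying the width bound amounts to the identity that the sum of the "leading" side lengths over all sub-blocks at every level equals $\sum_{j=0}^{k-1}s_{j^m}$. I expect this to reduce to a clean double-counting argument but it requires care to state precisely.

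\textbf{Sharpness when $k=2$.} For the last sentence, with $k=2$ the claim is that $2^m$ cubes of descending sides $s_0\ge s_1\ge\cdots\ge s_{2^m-1}$ pack into a cube of side $s_0+s_1$, and this is optimal regardless of $s_2,\dots,s_{2^m-1}$. Optimality is immediate: take $s_0=s_1$ and $s_2=\cdots=s_{2^m-1}$ positive but arbitrarily small; then any packing of the two largest cubes (each of side $s_0$) already requires a container of side at least $2s_0 = s_0+s_1$ along some axis — since two disjoint cubes of side $s_0$ cannot both project into an interval of length $<2s_0$ — so the bound $s_0+s_1$ cannot be improved. The achievability half is the $k=2$ instance of the construction above.
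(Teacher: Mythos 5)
There is a genuine gap: neither of your two packing schemes actually delivers the bound \eqref{sum-law}, and the ``recursive base-$k$ bookkeeping'' you defer to at the end is precisely the missing idea, not a routine verification. Your first scheme (slab $S_j$ of width $s_{(j-1)^m}$ along one axis receiving all cubes with indices $(j-1)^m\leq i\leq j^m-1$) fails outright: take $m=2$, $k=2$, $s_0=s_1=s_2=s_3=1$, so $s=s_0+s_1=2$; then slab $S_2$ is a $1\times 2$ rectangle that is supposed to contain the three unit squares $Q_1,Q_2,Q_3$, which is impossible (area $3>2$). The point is that the $(j+1)^m-j^m$ cubes of the $j$-th layer cannot be confined to a single slab; in the correct packing they occupy an L-shaped \emph{shell}. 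Your second scheme (blocks $G_j$ of $k^{m-1}$ consecutive cubes, packed by induction in dimension $m-1$ and stacked along the $m$-th axis) controls the height by $\sum_{j}s_{(j-1)k^{m-1}}\leq\sum_j s_{(j-1)^m}$, but the cross-sectional side it produces is $\sum_{i=0}^{k-1}s_{i^{m-1}}$, which is in general \emph{larger} than $\sum_{i=0}^{k-1}s_{i^m}$ (e.g.\ $m=3$, $k=3$: $s_0+s_1+s_4\geq s_0+s_1+s_8$, strictly if $s_4>s_8$), so it does not prove \eqref{sum-law}; you flag this as ``the delicate point'' but never resolve it. The inequality $\sigma_j\leq k\,s_{(j-1)^m}$ you invoke goes the wrong way, since $k\,s_0\geq s$.

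The idea that closes the gap (and is the paper's proof) is to build a $k\times\cdots\times k$ grid directly in dimension $m$, with grid lines on every axis at the cumulative positions $0,\ s_{0^m},\ s_{0^m}+s_{1^m},\ \dots$, so the cell indexed by $(i_1,\dots,i_m)\in\{0,\dots,k-1\}^m$ has dimensions $s_{i_1^m}\times\cdots\times s_{i_m^m}$ and hence contains an axis-parallel cube of side $s_{\max(i_1,\dots,i_m)^m}$. The number of cells with $\max(i_1,\dots,i_m)=j$ is exactly $(j+1)^m-j^m$, which matches the number of original cubes $Q_{j^m},\dots,Q_{(j+1)^m-1}$, each of side at most $s_{j^m}$ by monotonicity; assigning them bijectively to those cells packs everything in a cube of side exactly $s_{0^m}+s_{1^m}+\cdots+s_{(k-1)^m}$, with no induction on $m$ and no residual bookkeeping. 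On the sharpness claim for $k=2$: your projection argument (some coordinate axis on which the two largest cubes have essentially disjoint projections) is the right mechanism, but you only ran it for $s_0=s_1$; the statement asserts optimality for every fixed pair $s_0\geq s_1$, and the same argument gives that any cube containing both $Q_0$ and $Q_1$ has side at least $s_0+s_1$.
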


\begin{figure}\begin{center}\includegraphics[width=.3\textwidth]{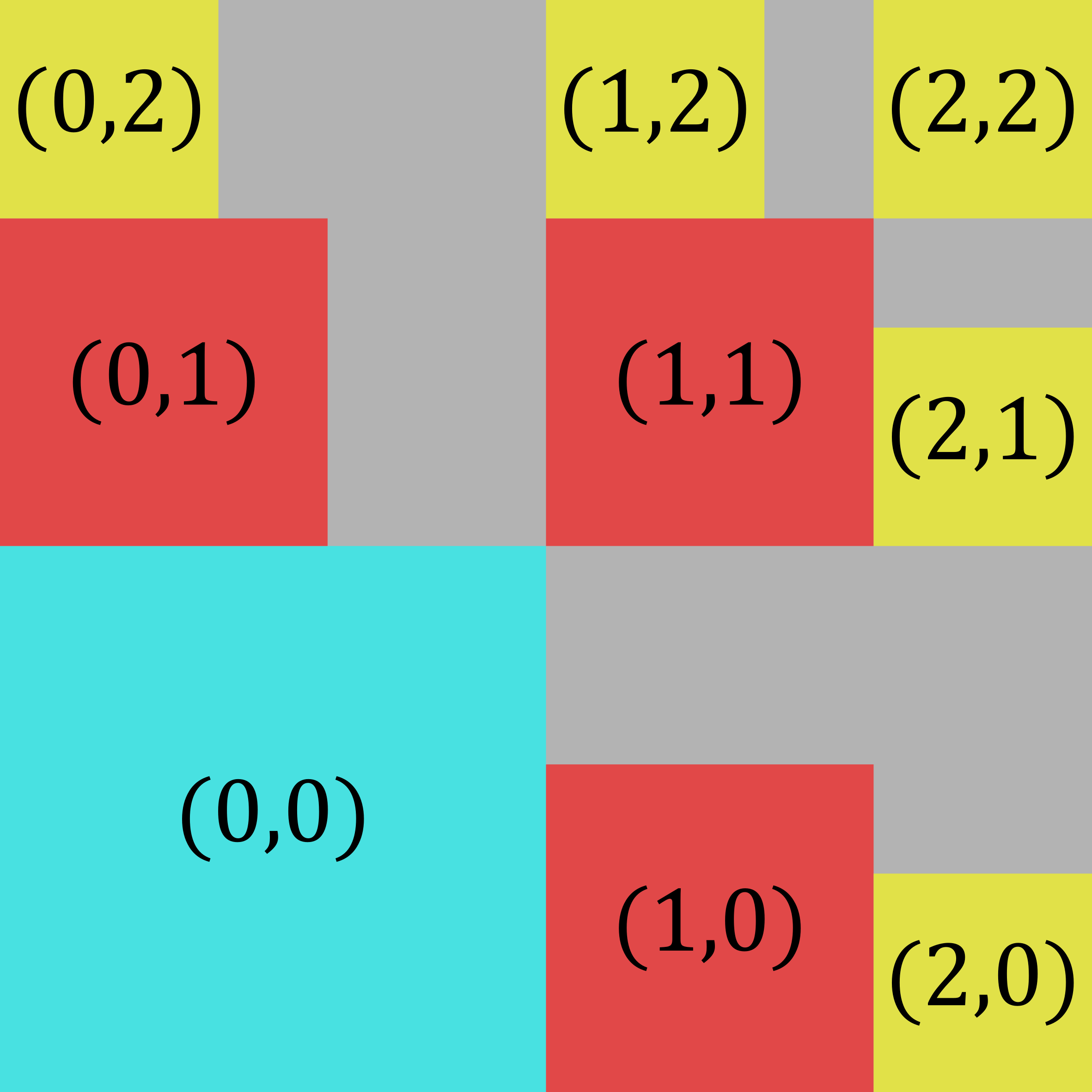}\end{center}\caption{The $3\times 3$ grid in the proof of Lemma \ref{root-packing} when $m=2$, $k=3$, and $s_0>s_1=s_2=s_3>s_4=s_5=s_6=s_7=s_8$.}\label{fig:grid}\end{figure}

\begin{proof} Let $Q_0,\dots, Q_{k^m-1}$ denote the cubes in the hypothesis. Create a $\underbrace{k\times\cdots\times k}_m$ grid of auxiliary cubes indexed by tuples $\{0,\dots,k-1\}^m$, where the cube in position $(i_1,\dots,i_m)$ has side length $s_{\mathbf{i}(i_1,\dots,i_m)}$, \begin{equation}\mathbf{i}(i_1,\dots,i_m):=\max\{i_1,\dots,i_m\}^m.\end{equation} See Figure \ref{fig:grid}. Inside of the grid, there are $(j+1)^m-j^m$ cubes of side length $s_{j^m}$ for each $0\leq j\leq k-1$. Because we have $s_{j^m}\geq s_{j^m+k}$ for all $k\geq 0$, the $(j+1)^m-j^m$ original cubes $Q_{j^m},\dots, Q_{(j+1)^m-1}$ may be arranged in one-to-one fashion to sit inside of the $(j+1)^m-j^m$ auxiliary cubes with $\mathbf{i}(i_1,\dots,i_m)=j^m$. By design, the grid of auxiliary cubes pack inside a cube of side length given by \eqref{sum-law}. Thus, so do the original cubes.

For the final claim, simply note that the side length of any cube $Q$ in $\RR^m$ that contains both $Q_0$ and $Q_1$ is at least $s_0+s_1$.
\end{proof}

\begin{corollary}\label{c-packing} It is possible to pack between $(k-1)^m+1$ and $k^m$ cubes in $\RR^m$ of equal side length $s$ into a cube of side length \begin{equation}\label{root-law} ks=\lceil\text{$m$-th root of number of cubes}\rceil\cdot \text{side length of a cube}.\end{equation}\end{corollary}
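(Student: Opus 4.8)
The plan is to deduce Corollary \ref{c-packing} directly from Lemma \ref{root-packing}; the only work is a small amount of bookkeeping with ceilings. Suppose we are given $N$ axis-parallel cubes in $\RR^m$ of common side length $s$, where $(k-1)^m+1\leq N\leq k^m$ for some integer $k\geq 1$. First I would record the elementary identity $\lceil N^{1/m}\rceil=k$ on exactly this range of $N$: the inequality $(k-1)^m<N$ gives $N^{1/m}>k-1$, hence $\lceil N^{1/m}\rceil\geq k$, while $N\leq k^m$ gives $N^{1/m}\leq k$, hence $\lceil N^{1/m}\rceil\leq k$. Thus $ks=\lceil N^{1/m}\rceil\cdot s$, which is the right-hand side of \eqref{root-law}.

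Next, if $N<k^m$ I would enlarge the collection by appending $k^m-N$ further cubes of side length $s$, so that there are now exactly $k^m$ cubes, all of side length $s$. This is a (weakly) descending sequence $s_0=s_1=\cdots=s_{k^m-1}=s$, so Lemma \ref{root-packing} applies when $k\geq 2$ (the case $k=1$ is trivial, since then $N=1$ and a single cube of side length $s$ sits inside a cube of side length $s$). The lemma produces a packing of all $k^m$ cubes inside an axis-parallel cube of side length $s_{0^m}+s_{1^m}+\cdots+s_{(k-1)^m}$. Each index $j^m$ with $0\leq j\leq k-1$ satisfies $j^m\leq (k-1)^m<N$, so every term $s_{j^m}$ equals $s$, and therefore this side length is exactly $ks$. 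Restricting the packing to the original $N$ cubes, which form a subcollection, yields the asserted packing into a cube of side length $ks$.

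I do not expect any real obstacle: the geometric content is entirely carried by Lemma \ref{root-packing}, and Corollary \ref{c-packing} is merely its specialization to equal side lengths, phrased in the ``$m$-th root of the number of cubes'' form that will be convenient in \S\ref{s:Lipschitz}. (For equal cubes one could even bypass the lemma and simply observe that $k^m$ congruent cubes of side $s$ tile the standard $k\times\cdots\times k$ subdivision of a cube of side $ks$; I would present the lemma-based argument only for uniformity with the surrounding construction.)
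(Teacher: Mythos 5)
Your proposal is correct and follows exactly the route the paper intends: the corollary is an immediate specialization of Lemma \ref{root-packing} to equal side lengths (pad to $k^m$ cubes, note $\lceil N^{1/m}\rceil=k$ and $s_{j^m}=s$ for all $0\leq j\leq k-1$, so the containing cube has side $ks$), with the $k=1$ case trivial. The paper offers no separate argument, so there is nothing further to compare.
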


\begin{remark} By now classical results of Moon and Moser \cite{Moon-Moser} ($m=2$) and Meir and Moser \cite{Meir-Moser} ($m\geq 3$), any countable set of cubes in $\RR^m$ of total volume $V$ can be packed inside a cube of total volume $2^{m-1}V$. Even though they are simple, the preceding results indicate that total area of a list of squares is not a useful quantity for determining an \emph{optimal} square packing. See Figure \ref{fig:packing}.%
\begin{figure}\begin{center}\includegraphics[width=.6\textwidth]{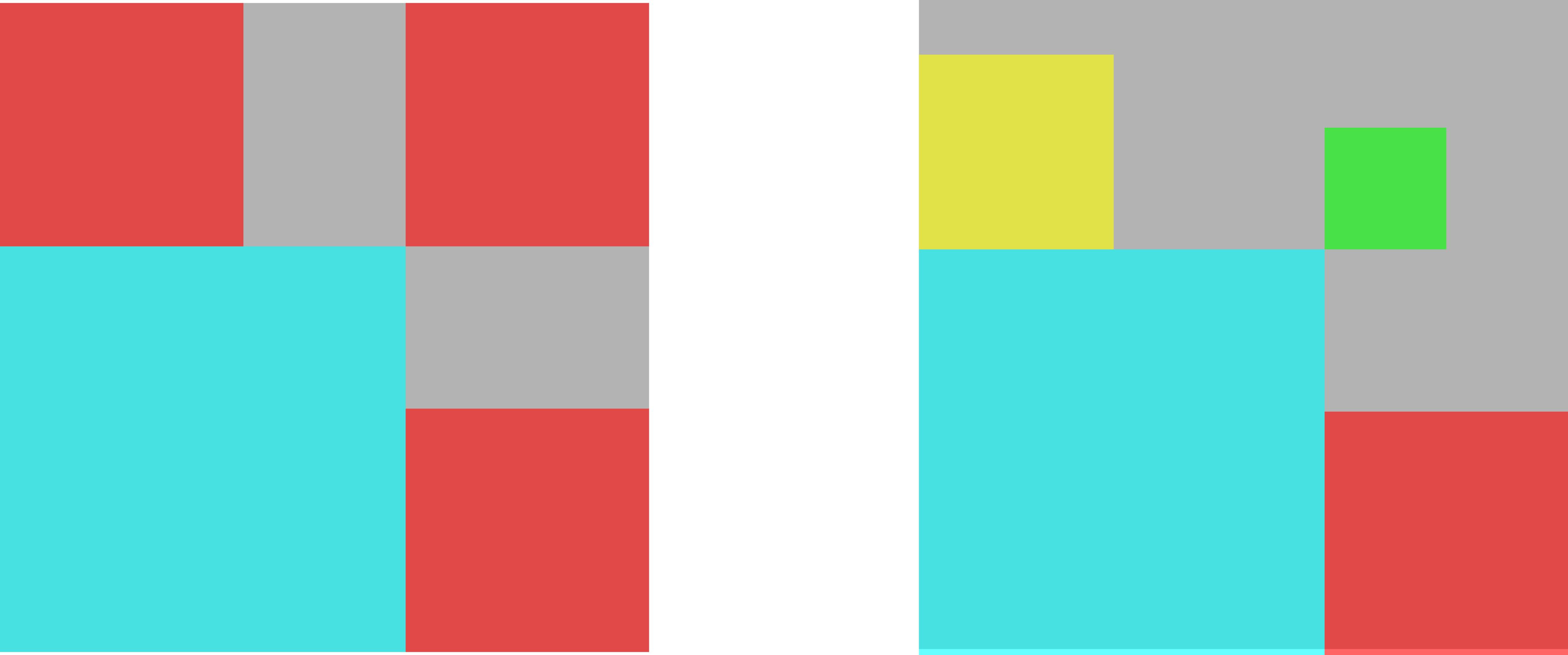}\end{center}\caption{Left: squares with side lengths $s_0>s_1=s_2=s_3$. Right: squares with side lengths $s_0>s_1>s_2'>s_3'$. Both sets of squares fit inside of a square of side length $s=s_0+s_1$. Adjusting the side lengths so that $s_1\approx s_0$ and $s_2',s_3'\ll s_0$, the ratio of the total area of the four squares on the left to the total area of the corresponding squares on the right can be made arbitrarily close to 2. Thus, scaling the picture on the right, there are lists of squares with the same total area, but different optimal packings.}\label{fig:packing}\end{figure}%
\end{remark}

\begin{remark}\label{r:tree-square} More generally, consider the following \emph{2-dimensional embedding problem} for \emph{trees of sets} in a metric space (there are a variety of possible definitions, see e.g.~\cite[\S2]{BV} or Definition \ref{def:tree} in the next section):
Given a tree of sets $\mathcal{T}$ in a metric space $\XX$, build (if it is possible to do so) a combinatorially equivalent tree $\mathcal{S}$ of nested squares in the plane and a map $Q\in\mathcal{T} \overset{S}{\mapsto} S_Q\in\mathcal{S}$ such that (i) $S_{R}\subset S_{Q}$ whenever $R$ is a descendant of $Q$ in $\mathcal{T}$ and (ii) $\diam S_Q\geq \diam Q$ for all $Q\in\mathcal{T}$. Furthermore, assuming that at least one solution exists, minimize the side length of $S_{\Top(\mathcal{T})}$.

Based on \cite[\S3]{BS2}, which handled  a related 1-dimensional problem, one might naively guess that $\sum_{Q\in\mathcal{T}}(\diam Q)^2<\infty$ implies existence of the tree $\mathcal{S}$ and map $S$.\footnote{When $\XX$ is locally quasiconvex, we now know that $\sum_{Q\in\mathcal{T}}(\diam Q)^2<\infty$ implies there exists a H\"older continuous map $f:[0,1]\rightarrow \XX$ with $|f(x)-f(y)|\leq H|x-y|^{1/2}$ for all $x,y\in[0,1]$ such that $f([0,1])$ contains $\leaves(\mathcal{T})$; see \cite{BV,Balogh-Zust}. For further related results, see \cite{AS-TST,BNV,Hyde-TST}.} However, the inadequacy of area for optimal square packings gives us a clear reason why this cannot be the case. A better candidate for a sufficient test for existence based on Corollary \ref{c-packing} appears to be finiteness of the maximal \emph{total diameter} of sets in a \emph{subtree} formed by keeping only \emph{square root many} children of each set in the tree (rounded up). 

For a concrete example, let us construct a tree $\mathcal{T}=\bigcup_{n=0}^\infty\mathcal{T}_n$ of axis-parallel cubes in $\RR^3$ as follows. Initialize $\mathcal{T}_0=\{[0,1]^3\}$. Assume that $\mathcal{T}_{n-1}$ has been defined for some $n\geq 1$. For each $Q\in\mathcal{T}_{n-1}$, include 9 subcubes $Q_1,\dots, Q_9$ of $Q$ of side length $s_n=\frac{1}{n}3^{-n}$ in the set $\mathcal{T}_n$ (8 subcubes in the corners, 1 subcube in the center). In the $n$-th level $\mathcal{T}_n$ of the tree, there are $9^n$ cubes of diameter $\sqrt{3}s_n$, where $s_0=1$. On the one hand, \begin{equation}\label{converging-sum}\sum_{Q\in\mathcal{T}}(\diam Q)^2 = 3+\sum_{n=1}^\infty 9^n\cdot 3s_n^2=3+\sum_{n=1}^\infty\frac{3}{n^2}<\infty.\end{equation} Thus, the Cantor set $E_1:=\bigcap_{n=0}^\infty \bigcup_{Q\in\mathcal{T}_n}Q$ of the leaves of the tree $\mathcal{T}$ has $\Haus^2(E_1)=0$. Moreover, it can be shown that $E_1$ has Hausdorff dimension $2$. On the other hand, suppose that $\mathcal{T}^{1/2}$ is any subtree of $\mathcal{T}$ such that the number of children of a cube in $\mathcal{T}^{1/2}$ is the square root of the number of children of that cube in $\mathcal{T}$. Then  \begin{equation}\label{diverging-sum}
\sum_{Q\in\mathcal{T}^{1/2}} \diam Q \geq \sum_{n=1}^\infty \sqrt{9^n}\cdot\sqrt{3} s_{n} = \sum_{n=1}^\infty \frac{\sqrt{3}}{n}=\infty. \end{equation} Appealing to Corollary \ref{c-packing}, we see that $\mathcal{T}$ cannot be represented as a tree $\mathcal{S}$ of nested squares with $\diam S_Q\geq \diam Q$ for all $Q\in\mathcal{T}$, because the maximal square $S_{[0,1]^3}$ in $\mathcal{S}$ would need to have infinite side length by \eqref{diverging-sum}.\end{remark}

\begin{remark}\label{r:alberti-csornyei} The Cantor set $E_1\subset\RR^3$ described in the previous remark is not contained in a Lipschitz image of $\RR^2$; a robust proof of this fact was communicated to the authors by G.~Alberti and M.~Cs\"ornyei in 2019.\end{remark}

\begin{remark}\label{ex:alpha} Let $\alpha>1$. Modify the tree in Remark \ref{r:tree-square} so that the cubes in $\mathcal{T}_n$ have side length $s_n=\frac{1}{n^\alpha}3^{-n}$ when $n\geq 1$. Each cube $Q\in\mathcal{T}_n$ has $N_n=9$ children and diameter $D_n=\sqrt{3}s_n$. Since $\alpha>1$, it follows that \begin{equation}S_\alpha:=\sum_{j=0}^\infty \left(\prod_{i=0}^{j} \lceil N_i^{1/2}\rceil \right) D_j= \sum_{j=0}^\infty 3^{j+1}D_j=3\sqrt{3}+3\sqrt{3}\sum_{j=1}^\infty \frac{1}{n^{\alpha}}<\infty.\end{equation} Therefore, by Theorem \ref{t:pack} / Corollary \ref{c:pack}, the Cantor set $E_\alpha=\bigcap_{n=0}^\infty \bigcup\mathcal{T}_n$ is contained in the image of an $S_\alpha$-Lipschitz map $f:[0,1]^2\rightarrow\RR^3$. An example of this kind was found by the first author and V.~Vellis in 2019. It helped lead to the results in \S2.\end{remark}

\begin{remark}Each of the Cantor sets $E_1$ and $E_\alpha$ ($\alpha>1$) are $\Haus^2$ null sets of Hausdorff dimension 2. One possible interpretation of the existence / non-existence of Lipschitz maps is that the sets $E_\alpha$ are (distorted) copies of null sets from $\RR^2$ inside of $\RR^3$, whereas the set $E_1$ is a ``new'' 2-dimensional null set in $\RR^3$ that does not exist in $\RR^2$.  \end{remark}

The rest of the paper is organized as follows. In \S\ref{s:Lipschitz}, we show how to use Lemma \ref{root-packing} to build Lipschitz images of $[0,1]^m$ containing the leaves of a tree of sets in a metric space. Simple applications to measures with positive lower density and finite upper density and to sets with small Assouad dimension are given in \S\ref{s:density}. The second half of the paper is devoted to the proof of the main theorem. In \S\ref{s:background}, we provide necessary background on the geometry of metric spaces and dimension of measures. In \S\ref{s:bernoulli}, we define and establish basic estimates for a family of quasi-Bernoulli measures $\muse$ on a complete Ahlfors $q$-regular metric space $\XX$, where $\mathbf{s}=(s_k)_{k\geq 1}$ is a sequence of ``target dimensions''. These measures are variants of the classic Bernoulli products on $[0,1]$. When $s_*=\inf_{k\geq 1}s_k>0$, the measure $\muse$ is doubling, and when $s=\lim_{k\rightarrow\infty} s_k< q$, the measure $\muse$ has exact Hausdorff and packing dimension $s$. In \S\ref{s:rect}, we record the proof of Theorem \ref{t:main}. In particular, we use the ``square packing construction'' of Lipschitz maps to show that when $s<m$, the measure $\muse$ is $m$-rectifiable.

\begin{remark}[prevalent notation] Throughout the paper, we use the letters $i,j,k,l,n$ interchangeably for indexing countable families, but reserve the letter $m$ for the dimension of Euclidean cube packings or the dimension of Euclidean space in the domain of Lipschitz maps $f:E\subset\RR^m\rightarrow\XX$ that appear in the definition of $m$-rectifiable measures. We always write $d$ for the dimension of Euclidean space in the event of a Euclidean codomain $\XX=\RR^d$. The letter $b$ is used exclusively for the scaling factor $b>1$ in a family of metric $b$-adic cubes and the letter $q$ is used exclusively for the dimension of an Ahlfors regular metric space. The letter $s$ generally refers to the dimension of a Hausdorff measure $\Haus^s$ or packing measure $\Pack^s$, but is sometimes used for side length of a square or cube. Greek letters such as $\delta,\epsilon,\tau$ represent errors or small parameters and usually take values in the range $(0,1)$, except for $\mu$ and $\nu$, which are reserved for measures. We write $c_{p_1,p_2,\dots}$ and $C_{p_1,p_2,\dots}$ to denote indeterminate positive and finite constants with values that can be bounded above and below using the listed parameters $p_1,p_2,\dots$. As is nowadays common, the notation $x\lesssim_{p_1,p_2,\dots} y$ is short hand for $x\leq C_{p_1,p_2,\dots} y$, when we don't need to manipulate $C_{p_1,p_2,\dots}$ in subsequent expressions. The notation $x\ll y$ or $y\gg x$ is sometimes used for emphasis and is meant to be read as ``$x$ is much smaller than $y$'' or ``$y$ is much larger than $x$''. We typically denote the underlying distance between points $x$ and $y$ in a metric space by $|x-y|$. The \emph{diameter} of a nonempty set $E$ in a metric space is $\diam E:=\sup\{|x-y|:x,y\in E\}$. The \emph{gap} between nonempty sets $A$ and $B$ in a metric space is $\gap(A,B):=\inf\{|x-y|:x\in A,y\in B\}$. (This terminology comes from variational analysis. The notation $\dist(A,B)$ is used more often in the literature, but ``distance'' is problematic, since $\dist(A,C)\leq \dist(A,B)+\dist(B,C)$ usually fails.) Finally, just in case it is not familiar to the reader, we mention that the \emph{ceiling} of a real number $x$ is $\lceil x\rceil := \inf\{n\in\ZZ:x\leq n\}$. \end{remark}

\section{Square packings and Lipschitz maps}\label{s:Lipschitz}

Because it requires no more effort, we adopt a very weak definition of a tree of sets in a metric space that allows for overlap and repetition. The only coherence condition is that each set is contained in its parent.

\begin{definition}\label{def:tree} Let $\XX$ be a metric space. We call $\mathcal{T}=\bigsqcup_{j=0}^\infty\mathcal{T}_j$ a \emph{tree of sets} in $\XX$ if \begin{itemize}
\item $\mathcal{T}_j$ is a finite multiset\footnote{A finite multiset is a finite unordered list with repetition allowed. For example, $\{1,2,2\}$ and $\{1,1,2,2,2\}$ are multisets with 3 and 5 elements, respectively. Their disjoint union $\{1,2,2\}\sqcup \{1,1,2,2,2\}$ is the multiset $\{1,1,1,2,2,2,2,2\}$ with 8 elements. A function between multisets is a function between sets formed by assigning any repeated elements a different color. For example, we could define an injective function $f:\{1,2,2\}\rightarrow\{1,1,2,2,2,2\}$ by forming sets $\{1,2,\color{PTblue}2\color{black}\}$ and $\{1,\color{PTblue}1\color{black},2,\color{PTblue}2\color{black},\color{PTmagenta}2\color{black},\color{PTteal}2\color{black}\}$ and defining $f(1)=2$, $f(2)=\color{PTblue}2\color{black}$, and $f(\color{PTblue}2\color{black})=\color{PTmagenta}2\color{black}$.} of nonempty subsets of $\XX$ for all $j\geq 0$;
\item $\#\mathcal{T}_0=1$ (counting multiplicity); and,
\item there is a function $\uparrow:\bigsqcup_{j=1}^\infty \mathcal{T}_j\rightarrow \mathcal{T}$ such that $Q\subset Q^\uparrow\in\mathcal{T}_{j-1}$ for all $j\geq 1$ and $Q\in\mathcal{T}_j$.
\end{itemize} We call $Q^\uparrow$ the \emph{parent} of $Q$ and call $Q$ a \emph{child} of $Q^\uparrow$. For all $j\geq 0$ and $Q\in\mathcal{T}_j$, we let $\Child(Q):=\{R\in\mathcal{T}_{j+1}:Q=R^\uparrow\}$ denote the \emph{set of children} of $Q$. We denote the unique set in $\mathcal{T}_0$ by $\Top(\mathcal{T})$. An \emph{infinite branch} in $\mathcal{T}$ is a sequence of sets $Q_j\in\mathcal{T}_j$ such that $Q_{j+1}\in\Child(Q_j)$ for all $j\geq 0$. Finally, the \emph{set of leaves} of $\mathcal{T}$ is defined by \begin{equation}\leaves(\mathcal{T}):=\bigcap_{j=0}^\infty \bigcup_{Q\in\mathcal{T}_j} Q.\end{equation}
\end{definition}

\begin{remark}Let $\mathcal{T}$ be a tree of sets in a metric space $\XX$. If every set $Q\in\mathcal{T}$ is closed, then $\leaves(\mathcal{T})$ is closed. If every set $Q\in\mathcal{T}$ is Borel, then $\leaves(\mathcal{T})$ is Borel.
\end{remark}

Recall that a map $f$ between metric spaces is $L$-Lipschitz if $|f(x)-f(y)|\leq L|x-y|$ for all $x$ and $y$ in the domain of $f$. Here and throughout the paper, we use the convention that $|\cdot-\cdot|$ denotes distance between points in the appropriate metric. We let $\ell^m_p$ denote $\RR^m$ equipped with the $\ell_p$ norm. In particular, $\ell^m_2$  and $\ell^m_\infty$ are equipped with the standard Euclidean norm and the supremum norm, respectively.

\begin{lemma}[square packing construction of Lipschitz maps] \label{l:pack} Let $\mathcal{T}=\bigsqcup_{j=0}^\infty \mathcal{T}_j$ be a tree of sets in a metric space $\XX$. For each $j\geq 0$, assign \begin{equation}\label{N-def} N_j:=\max_{Q\in\mathcal{T}_j}\#\Child(Q)\quad \text{and}\quad D_j:=\max_{Q\in\mathcal{T}_j}\,\diam Q.\end{equation}  Let $m,l\geq 1$ be integers and suppose that $\mathcal{T}_l\neq\emptyset$. Compute the finite quantity \begin{equation}\label{S-condition} s:=\sum_{j=0}^{l-1}\left(\prod_{i=0}^{j-1} \lceil N_i^{1/m}\rceil\right) (\lceil N_j^{1/m}\rceil-1)D_j.\end{equation} (When $j=0$, $\prod_{i=0}^{j-1} \lceil N_i^{1/m}\rceil=1$.) For any multiset $F=\{x_Q\in Q:Q\in\mathcal{T}_l\}$, there exists a set $E\subset \ell_\infty^m\cap [0,s]^m$ with $\#E=\#F<\infty$ and a bijective 1-Lipschitz map $f:E\rightarrow F$.
\end{lemma}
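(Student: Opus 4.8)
The plan is to realize $F$ as a $1$-Lipschitz image of a finite point set sitting inside a nested family of axis-parallel cubes in $\ell_\infty^m$, where the nesting mirrors $\mathcal{T}$ truncated at level $l$ and the side lengths are dictated by the telescoping sum defining $s$. Write $k_j:=\lceil N_j^{1/m}\rceil$ (so $k_j\geq 1$, since $\mathcal{T}_l\neq\emptyset$ forces $N_j\geq 1$ for $j<l$) and define side lengths by $\sigma_l:=0$ and, for $j=l-1,l-2,\dots,0$, by $\sigma_j:=k_j\sigma_{j+1}+(k_j-1)D_j$. Unwinding the recursion gives $\sigma_j=\sum_{i=j}^{l-1}\bigl(\prod_{t=j}^{i-1}k_t\bigr)(k_i-1)D_i$, so in particular $\sigma_0=s$.

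First I would construct, by downward induction on the level $j=0,1,\dots,l$, a family of axis-parallel cubes $\{S_Q\subset\ell_\infty^m:Q\in\mathcal{T}_j,\ 0\le j\le l\}$ with the properties: $S_{\Top(\mathcal{T})}=[0,s]^m$; each $S_Q$ with $Q\in\mathcal{T}_j$ has side length $\sigma_j$; $S_R\subset S_Q$ whenever $R\in\Child(Q)$; and $\gap(S_R,S_{R'})\ge D_j$ (in the $\ell_\infty$ metric) for every $Q\in\mathcal{T}_j$ with $j<l$ and every pair of distinct children $R,R'\in\Child(Q)$. The inductive step is the cube packing of Lemma \ref{root-packing}/Corollary \ref{c-packing} with separation built in: a set $Q\in\mathcal{T}_j$ has $n:=\#\Child(Q)\le N_j\le k_j^m$ children, and inside the cube $S_Q$ of side $\sigma_j=k_j\sigma_{j+1}+(k_j-1)D_j$ one places a $k_j\times\cdots\times k_j$ grid of cubes of side $\sigma_{j+1}$, the cube indexed by $\mathbf{a}\in\{0,\dots,k_j-1\}^m$ being a fixed translate of $\prod_{t=1}^m[a_t(\sigma_{j+1}+D_j),\,a_t(\sigma_{j+1}+D_j)+\sigma_{j+1}]$. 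The bounding box of this grid has side exactly $(k_j-1)(\sigma_{j+1}+D_j)+\sigma_{j+1}=\sigma_j$, so it can be placed to coincide with $S_Q$; and any two grid cubes differ in some coordinate $t$, where their $t$-projections are separated by at least $(\sigma_{j+1}+D_j)-\sigma_{j+1}=D_j$, hence $\gap(\cdot,\cdot)\ge D_j$ in $\ell_\infty$. Assigning the $n$ children of $Q$ injectively to $n$ of the $k_j^m$ grid cubes defines $S_R$ for $R\in\Child(Q)$ and completes the step.

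Then I would read off the map. For $Q\in\mathcal{T}_l$ the cube $S_Q$ has side $\sigma_l=0$, i.e.\ is a single point $e_Q$; set $E:=\{e_Q:Q\in\mathcal{T}_l\}\subset S_{\Top(\mathcal{T})}=\ell_\infty^m\cap[0,s]^m$ and $f(e_Q):=x_Q$. The one estimate that makes everything work is: if $Q\ne Q'$ in $\mathcal{T}_l$ and $j\le l-1$ is the largest level at which $Q$ and $Q'$ have a common ancestor $A\in\mathcal{T}_j$ (such $j$ exists because $\#\mathcal{T}_0=1$), then the level-$(j+1)$ ancestors $R\ne R'$ of $Q,Q'$ are distinct children of $A$, so by nestedness $e_Q\in S_R$ and $e_{Q'}\in S_{R'}$, whence
\[ |e_Q-e_{Q'}|_{\ell_\infty}\ \ge\ \gap(S_R,S_{R'})\ \ge\ D_j\ \ge\ \diam A\ \ge\ |x_Q-x_{Q'}|, \]
using $x_Q\in Q\subset A$ and $x_{Q'}\in Q'\subset A$. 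This single chain shows simultaneously that the $e_Q$ are pairwise distinct (so $\#E=\#\mathcal{T}_l=\#F$) and that $f$ is $1$-Lipschitz, and $f$ is a bijection of multisets onto $F$ by construction.

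The main obstacle is really only bookkeeping: getting the arithmetic identity $\sigma_0=s$ and checking that the padded grid's bounding box has side exactly $\sigma_j$, so that no room is wasted and the top cube has side precisely $s$; the reduction "separation needed at the splitting level $\le$ diameter of the common ancestor $\le D_j$" is then immediate from nestedness. One genuinely degenerate case deserves a remark: if $D_j=0$ for some $j\le l-1$, then every set at levels $\ge j$ is a single point, $\sigma_j=0$, and the chain above only yields $|e_Q-e_{Q'}|_{\ell_\infty}\ge 0$; but in that case $\diam A=0$ forces $x_Q=x_{Q'}$, so the affected sub-multiset of $F$ is a repeated point and $f$ remains a $1$-Lipschitz bijection of multisets (and if literal distinctness of the $e_Q$ is wanted one perturbs within the ambient cube, which has positive side as soon as $s>0$). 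This technicality does not affect any application.
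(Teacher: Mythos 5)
Your proof is correct and is essentially the paper's argument: you realize the truncated tree as a nested family of axis-parallel cubes in $\ell_\infty^m$, packing the children of each node in a padded grid with mutual gaps at least $D_j$, governed by the same side-length recursion $\sigma_j=\lceil N_j^{1/m}\rceil\sigma_{j+1}+(\lceil N_j^{1/m}\rceil-1)D_j$ that the paper solves (as \eqref{recursion-formula}) to obtain \eqref{S-condition}; the only difference is organizational, namely an explicit top-down construction with a deepest-common-ancestor Lipschitz estimate in place of the paper's recursion on the depth $l$ via Lemma \ref{root-packing} and its two-case Lipschitz check. Your remark about the degenerate case $D_j=0$ corresponds to the multiset/coloring convention the paper uses implicitly (its proof likewise only gives injectivity ``as a multiset map''), so it is not a gap.
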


\begin{figure}[t]\begin{center}\includegraphics[width=.45\textwidth]{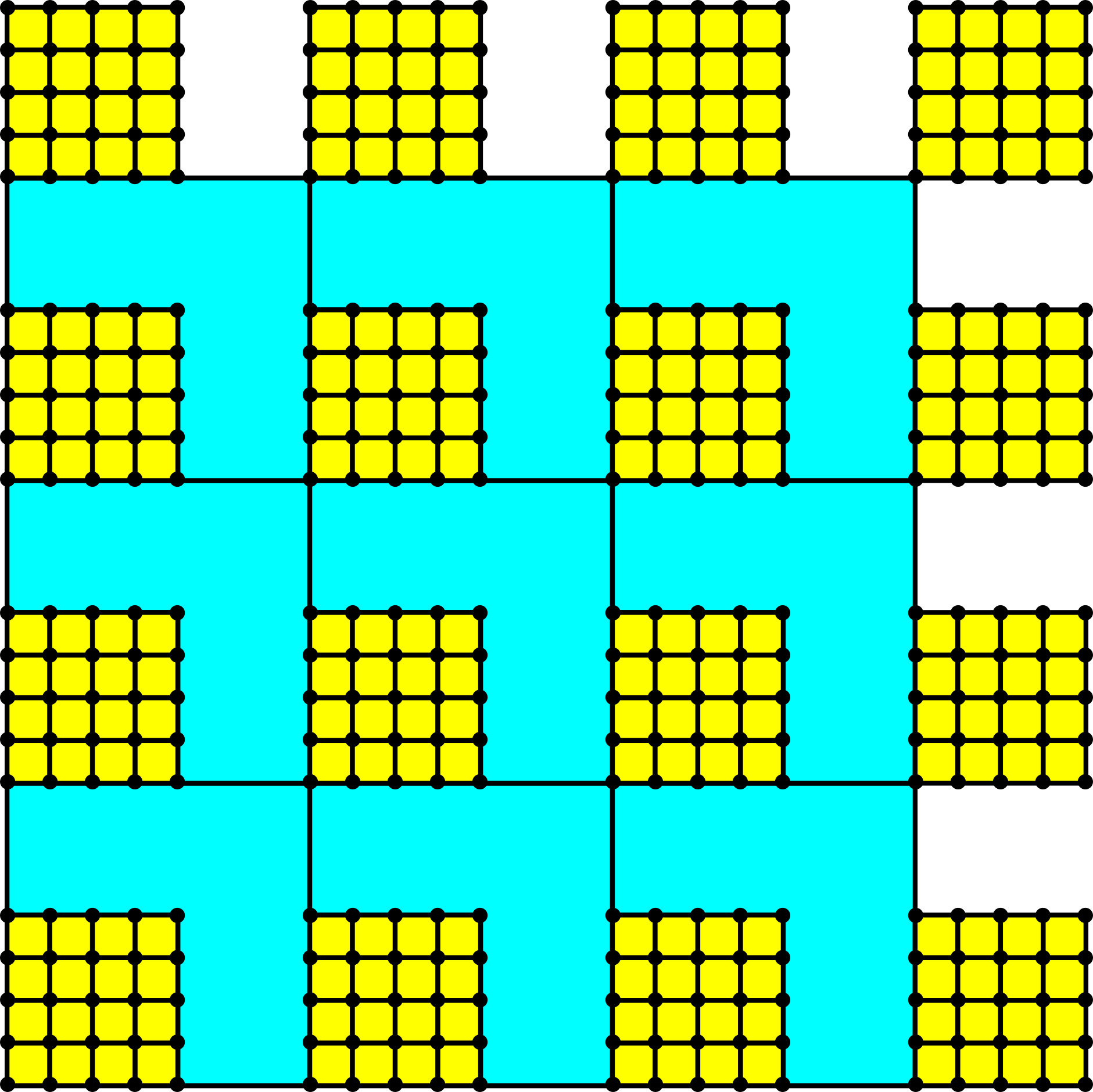}\end{center}\caption{Illustration of the domain of the Lipschitz map $f$ in the ``square packing construction'' with dimension $m=2$, tree depth $l=2$, and the maximal number of children of sets in $\mathcal{T}_0$ and $\mathcal{T}_1$ given by $N_0=16$ and $N_1=25$, respectively. The side length of each of yellow square is $D_1$ and the side length of each ``block'' of yellow squares is $(\lceil N_1^{1/2}\rceil -1)D_1$. The side length of each blue square is equal to the side length of a yellow block plus $D_0$. All together, the domain sits inside a square of side length $(\lceil N_0^{1/2}\rceil-1)D_0+\lceil N_0^{1/2}\rceil(\lceil N_1^{1/2}\rceil-1) D_1$.}\label{fig:two-sizes}\end{figure}

\begin{proof} Let $\mathcal{T}$, $m$, $l$, and $F$ be given. We want to build a 1-Lipschitz map $f$ whose image is $F$. Fix any coloring on $F$. For each point $x'\in F$, we must decide how to place a point $x=g(x')$ in $\RR^m$ such that for every pair of points $x',y'\in F$ on the image side of $f$, we have $|x-y|\geq |x'-y'|$ on the domain side of $f$. (That is, $g$ must be \emph{distance non-decreasing}.) The familial relationships in $\mathcal{T}$, the quantities $N_j$ and $D_j$, and Lemma \ref{root-packing} will tell us one way that we can accomplish this. Further, the assignment $g$ from points in $F$ to points in $\RR^m$ will be one-to-one. Once we have placed a point $x$ in $\RR^m$ for each $x'\in F$, we simply assign $E:=g(F)$ and define $f:E\rightarrow F$ to be the inverse of the bijection $g:F\rightarrow E$. The map $f$ is 1-Lipschitz, because $|f(x)-f(y)|=|x'-y'|\leq |x-y|$ by the stipulation above.

The description of the assignment $g$ is recursive. Suppose that we know how to do the construction for trees of depth $l-1$. Let $\mathcal{T}_l$ be nonempty and let $F=\{x_Q:Q\in\mathcal{T}_l\}$. Then the truncated tree $\bigsqcup_{j=0}^l\mathcal{T}_j$ is a disjoint union of $N_0=\#\Child(\Top(\mathcal{T}_0))=\#\mathcal{T}_1$ trees $\mathcal{T}^{1},\cdots,\mathcal{T}^{N_0}$ of depth $l-1$, where $\{\Top(\mathcal{T}^i):1\leq i\leq N_0\}=\Child(\Top(\mathcal{T}_0))=\mathcal{T}_1$. Notice that the $j$-th level of any $\mathcal{T}^i$ is made up of sets belonging to the $(j+1)$-st level of $\mathcal{T}$. Hence $N_j(\mathcal{T}^i)\leq N_{j+1}(\mathcal{T})$ and $D_j(\mathcal{T}^i)\leq D_{j+1}(\mathcal{T})$ for all $0\leq j\leq l-2$. Write $F=\bigsqcup_{i=1}^{N_0}F^i$, where $F^i=\{x_Q:Q\in\mathcal{T}_l\text{ descends from} \Top(\mathcal{T}^i)\}$. Since we know how to do the construction for trees of depth $l-1$, for each index $1\leq i\leq N_0$, we can find a set $E_i\subset \RR^m$ and an injective 1-Lipschitz map $f_i:E_i\rightarrow \XX$ such that $f(E_i)=F_i$ and $E_i$ is contained in some cube $S_i$ in $\RR^m$ of side length at most $s_{l-1}$ for some number $s_{l-1}>0$ depending only on $m$ and $N_1,\dots,N_{l-1}$ and $D_1,\dots,D_{l-1}$. (Assuming momentarily that \eqref{S-condition} is correct, then \begin{equation}\label{previous-s} s_{l-1}:= \sum_{j=0}^{l-2}\left(\prod_{i=0}^{j-1} \lceil N_{i+1}^{1/m}\rceil\right) (\lceil N_{j+1}^{1/m}\rceil-1)D_{j+1}=\sum_{j=1}^{l-1}\left(\prod_{i=1}^{j-1}
\lceil N_{i}^{1/m}\rceil\right) (\lceil N_j^{1/m}\rceil -1)D_j\end{equation} will suffice.) By Lemma \ref{root-packing}, we can pack a collection of $\left(\lceil N_0^{1/m}\rceil-1\right)^m$ ``blue'' cubes of side length $(s_{l-1}+D_0)$ and $\lceil N_0^{1/m}\rceil^m-\left(\lceil N_0^{1/m}\rceil-1\right)^m$ ``red'' cubes of side length $s_{l-1}$ inside a cube of side length \begin{equation}\label{recursion-formula} s_l:=(\lceil N_0^{1/m}\rceil-1)(s_{l-1}+D_0)+s_{l-1}=\lceil N_0^{1/m}\rceil s_{l-1}+(\lceil N_0^{1/m}\rceil-1)D_0.\end{equation} Moreover, as in the proof of the lemma, we can arrange things so that the red cubes sit ``to the right'' of the blue cubes in each coordinate and any two distinct red cubes are separated by a distance at least the side length of a blue cube minus the side length of a red cube. See Figure \ref{fig:grid}, focusing only on the blue and red squares. To proceed, place translated copies $\vec S_i$ of the cubes $S_i$ (and hence translated copies $\vec E_i$ of the sets $E_i$) inside the collection of blue and red cubes in a one-to-one fashion. We can always do this, because $N_0\leq \lceil N_0^{1/m}\rceil^m$ and the side length of any $S_i$ is less than or equal to the side length of a blue or red cube. In Figure \ref{fig:two-sizes}, the cubes $\vec S_i$ are the ``yellow blocks''; the yellow blocks on the right and top of the figure cover the red cubes. When placing a cube $\vec S_i$ inside a blue cube, let's stipulate that we place $\vec S_i$ as far ``to the left'' as possible in each coordinate. This ensures than any two distinct $\vec S_i$ are separated by a distance at least $D_0$. We now define $E=\bigcup_{i=1}^{N_0} \vec E_i$ and define $f:E\rightarrow F$ by setting $f|_{\vec E_i}(\vec x)=f_i(x)$ for all $1\leq i\leq N_0$ and all $\vec x\in \vec E_i$, where $x$ is the unique point in $E_i$ corresponding to $\vec x$.

The map $f$ is injective (as a multiset map), because each $f_i$ is injective and their targets $F_i$ are disjoint inside of $F$ (with the fixed coloring). Next, let's check that $f$ is 1-Lipschitz.  Let $\vec x,\vec y\in E$. If $\vec x$ and $\vec y$ both belong to $\vec E_i$ for some $i$, then $|f(\vec x)-f(\vec y)|=|f_i(x)-f_i(y)|\leq |x-y|=|\vec x-\vec y|$, because $f_i$ is 1-Lipschitz and translation is an isometry. Suppose instead that $\vec x\in\vec E_i$ and $\vec y\in\vec E_j$ for some $i\neq j$. Then $|\vec x-\vec y|\geq D_0=\diam \Top(\mathcal{T})\geq |f(\vec x)-f(\vec y)|$. In both cases, we checked that $|f(\vec x)-f(\vec y)|\leq |\vec x-\vec y|$. Thus, $f$ is 1-Lipschitz, as claimed.

Solving the recurrence relation \eqref{recursion-formula} with initial condition $s_0=0$ yields \eqref{S-condition}. (If $l=0$, then $F=\{x_{\Top(\mathcal{T})}\}$ consists of a single point and the domain $E=\{0\}$ of $f$ is a ``cube'' of side length $s_0=0$. Note that $s_{l-1}=(\lceil N_1^{1/m}\rceil)s_{l-2}+(\lceil N_1^{1/m}\rceil-1) D_1$, etc.) Alternatively, one may verify \eqref{S-condition} using induction. Indeed, substituting \eqref{previous-s} into \eqref{recursion-formula}, we have \begin{align*}
s_l&= \lceil N_0^{1/m}\rceil s_{l-1}+(\lceil N_0^{1/m}\rceil-1) D_0\\
&=\left(\sum_{j=1}^{l-1}\left(\prod_{i=0}^{j-1}
\lceil N_{i}^{1/m}\rceil\right) (\lceil N_j^{1/m}\rceil -1)D_j\right)+(\lceil N_0^{1/m}\rceil-1)D_0\\
&=\sum_{j=0}^{l-1}\left(\prod_{i=0}^{j-1} \lceil N_i^{1/m}\rceil\right) (\lceil N_j^{1/m}\rceil-1)D_j. \qedhere\end{align*}\end{proof}

\begin{lemma}[each leaf sits at the end of a branch] \label{l:branch} Let $\mathcal{T}=\bigsqcup_{j=0}^\infty\mathcal{T}_j$ be a tree of sets in a metric space $\XX$. For each $j\geq 0$, let $D_j$ be given by \eqref{N-def}. If $\lim_{j\rightarrow\infty} D_j=0$, then for each $z\in\leaves(\mathcal{T})$, there exists an infinite branch $(Q_j^z)_{j=0}^\infty$ in $\mathcal{T}$ such that $\{z\}=\bigcap_{j=0}^\infty Q_j^z$. \end{lemma}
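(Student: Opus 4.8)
The plan is to extract the branch by König's lemma applied to the subtree of $\mathcal{T}$ consisting of those sets that contain $z$, using the hypothesis $\lim_{j\to\infty}D_j=0$ only at the very end to collapse the resulting nested intersection to a single point.

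First I would fix $z\in\leaves(\mathcal{T})$ and set $\mathcal{G}_j:=\{Q\in\mathcal{T}_j:z\in Q\}$ for each $j\geq 0$, regarded as a sub-multiset of $\mathcal{T}_j$ (the coloring used to disambiguate repeated sets plays no role in what follows, since the parent function of $\mathcal{T}$ is unambiguous). Because $\leaves(\mathcal{T})=\bigcap_{j\geq 0}\bigcup_{Q\in\mathcal{T}_j}Q$, every $\mathcal{G}_j$ is nonempty; in particular $z\in\Top(\mathcal{T})$, so $\mathcal{G}_0=\mathcal{T}_0=\{\Top(\mathcal{T})\}$. The key structural point is that $\mathcal{G}:=\bigsqcup_{j\geq 0}\mathcal{G}_j$ is again a tree of sets in $\XX$: if $Q\in\mathcal{G}_j$ with $j\geq 1$, then $z\in Q\subseteq Q^\uparrow\in\mathcal{T}_{j-1}$, so $Q^\uparrow\in\mathcal{G}_{j-1}$, and the parent function $\uparrow$ of $\mathcal{T}$ therefore restricts to a parent function on $\mathcal{G}$. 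Moreover every node of $\mathcal{G}$ has only finitely many children, since the children of $Q$ in $\mathcal{G}$ form a sub-multiset of $\Child(Q)$ and $\#\Child(Q)\leq\#\mathcal{T}_{j+1}<\infty$.

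Next I would invoke König's lemma in the form: a rooted tree with infinitely many nodes in which every node has finite degree contains an infinite branch. Here $\mathcal{G}$ is infinite (it has a nonempty level $\mathcal{G}_j$ for every $j$) and finitely branching (by the previous paragraph), so there is an infinite branch $(Q_j^z)_{j=0}^\infty$ with $Q_j^z\in\mathcal{G}_j$ and $Q_{j+1}^z\in\Child(Q_j^z)$ for all $j$; this is in particular an infinite branch in $\mathcal{T}$, and $z\in Q_j^z$ for every $j$. (If one prefers a self-contained argument, the same branch can be built by dependent choice: call a node $Q\in\mathcal{G}$ \emph{heavy} if the subtree of $\mathcal{G}$ rooted at $Q$ meets every level of $\mathcal{G}$; note $\Top(\mathcal{T})$ is heavy, and if $Q$ is heavy then a finite pigeonhole over the finitely many children of $Q$ in $\mathcal{G}$ — using that an ancestor of a $\mathcal{G}$-node is a $\mathcal{G}$-node — shows some child of $Q$ is heavy. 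Recursing produces a branch of heavy nodes.)

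Finally, since $z\in Q_j^z$ and $\diam Q_j^z\leq D_j$ for every $j$, we have $Q_j^z\subseteq B(z,D_j)$, hence $\bigcap_{j=0}^\infty Q_j^z\subseteq\bigcap_{j=0}^\infty B(z,D_j)=\{z\}$ because $D_j\to 0$; together with $z\in\bigcap_{j=0}^\infty Q_j^z$ this yields $\{z\}=\bigcap_{j=0}^\infty Q_j^z$, as required. The only step needing any care is the König's lemma argument: the definition of $\leaves(\mathcal{T})$ as an intersection of unions does not by itself provide a branch, since a leaf could a priori be witnessed at different levels by sets lying along different branches, and finiteness of each $\mathcal{T}_j$ combined with the nesting $Q\subseteq Q^\uparrow$ is precisely what rules this out.
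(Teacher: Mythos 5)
Your proposal is correct and follows essentially the same route as the paper: restrict attention to the sets containing $z$ (which is implicit in the paper's appeal to K\"onig's lemma together with $\#\mathcal{T}_j<\infty$), extract an infinite branch through $z$, and then use $\diam Q_j^z\leq D_j\rightarrow 0$ to conclude that the nested intersection is exactly $\{z\}$. The only difference is that you spell out the K\"onig's lemma step (via the subtree $\mathcal{G}$ and the ``heavy node'' recursion) which the paper leaves to a citation.
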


\begin{proof} Let $z\in\leaves(\mathcal{T})$. Since $\#\mathcal{T}_j<\infty$ for all $j\geq 0$, there exists an infinite branch $(Q_j^z)_{j=0}^\infty$ in $\mathcal{T}$ such that $z\in\bigcap_{j=0}^\infty Q_j^z$ by K\"onig's lemma; see e.g.~\cite{Konig-history} or \cite[p.~190]{DST}. Further, since every child is contained in its parent, $\diam\bigcap_{j=0}^\infty Q_j^z \leq \lim_{j\rightarrow\infty} D_j=0$.\end{proof}

\begin{theorem}[sufficient condition for the set of leaves to lie in a Lipschitz image] \label{t:pack} Let $\mathcal{T}=\bigsqcup_{j=0}^\infty\mathcal{T}_j$ be a tree of sets in a metric space $\XX$. For each $j\geq 0$, let $N_j$ and $D_j$ be given by \eqref{N-def}. If the closure of $\Top(\mathcal{T})$ is compact and \begin{equation}\label{root-condition} S:=\sum_{j=0}^\infty \left(\prod_{i=0}^j \lceil N_{j}^{1/m}\rceil \right) D_j<\infty,\end{equation} then there exists a compact set $E\subset \ell^m_\infty\cap [0,1]^m$ and a $S$-Lipschitz map $f:E\rightarrow\XX$ such that $f(E)$ contains $\leaves(\mathcal{T})$.\end{theorem}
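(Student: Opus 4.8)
The plan is to run the packing construction from the proof of Lemma~\ref{l:pack} all the way down the infinite tree, extract a nested family of cubes indexed by $\mathcal{T}$, read off a distance non-decreasing map defined on $\leaves(\mathcal{T})$, and then rescale and complete. If $\leaves(\mathcal{T})=\emptyset$ the conclusion is vacuous, so assume $\leaves(\mathcal{T})\neq\emptyset$; then $\mathcal{T}_j\neq\emptyset$ and $N_j\ge 1$ for every $j$, so every factor $\lceil N_i^{1/m}\rceil$ in \eqref{root-condition} is at least $1$ and finiteness of $S$ forces $\sum_{j\ge 0}D_j\le S<\infty$. In particular $D_j\to 0$, so Lemma~\ref{l:branch} applies: for each $z\in\leaves(\mathcal{T})$ we may fix (not necessarily uniquely, since the sets in $\mathcal{T}$ may overlap) an infinite branch $(Q^z_j)_{j\ge 0}$ with $\{z\}=\bigcap_j Q^z_j$.

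For $j\ge 0$ set $\sigma_j:=\sum_{k\ge j}\bigl(\prod_{i=j}^{k-1}\lceil N_i^{1/m}\rceil\bigr)(\lceil N_k^{1/m}\rceil-1)D_k$, with the empty product equal to $1$. Each summand is at most $\prod_{i=0}^{k}\lceil N_i^{1/m}\rceil\,D_k$, so $\sigma_j$ is bounded by the tail of the series in \eqref{root-condition}; hence $\sigma_j<\infty$, $\sigma_0\le S$, and $\sigma_j\to 0$. Splitting off the $k=j$ term gives the recursion $\sigma_j=\lceil N_j^{1/m}\rceil\,\sigma_{j+1}+(\lceil N_j^{1/m}\rceil-1)D_j$. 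I would now assign, recursively in $j$, to every $Q\in\mathcal{T}_j$ a closed axis-parallel cube $S_Q\subset\ell^m_\infty$ of side length $\sigma_j$: take $S_{\Top(\mathcal{T})}=[0,\sigma_0]^m$; given $S_Q$ with $Q\in\mathcal{T}_j$ and writing $n:=\lceil N_j^{1/m}\rceil$, translate into $S_Q$ the array of $n^m$ cubes of side $\sigma_{j+1}$ whose cell indexed by $(k_1,\dots,k_m)\in\{0,\dots,n-1\}^m$ occupies $\prod_{c=1}^m[\,k_c(\sigma_{j+1}+D_j),\ k_c(\sigma_{j+1}+D_j)+\sigma_{j+1}\,]$. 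This array has side $n\sigma_{j+1}+(n-1)D_j=\sigma_j$, so it fits in $S_Q$; any two of its cells are $\ell_\infty$-separated by at least $D_j$; and since $n^m\ge N_j\ge\#\Child(Q)$, we may declare $\{S_R:R\in\Child(Q)\}$ to be a sub-collection of distinct cells. (This is the packing of Lemma~\ref{root-packing}/Corollary~\ref{c-packing}, spaced out with a $D_j$ buffer exactly as in the proof of Lemma~\ref{l:pack}.) Then $S_R\subset S_Q$ whenever $R\in\Child(Q)$, while $\gap(S_R,S_{R'})\ge D_j$ for distinct $R,R'\in\Child(Q)$ with $Q\in\mathcal{T}_j$.

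For $z\in\leaves(\mathcal{T})$ the cubes $S_{Q^z_0}\supseteq S_{Q^z_1}\supseteq\cdots$ are nested, nonempty, and compact with side lengths $\sigma_j\to 0$, so $\bigcap_j S_{Q^z_j}$ is a single point $\phi(z)\in[0,\sigma_0]^m$. If $z\neq z'$, let $j$ be the largest index with $Q^z_j=Q^{z'}_j=:Q$; then $Q^z_{j+1}$ and $Q^{z'}_{j+1}$ are distinct children of $Q$ and $\phi(z),\phi(z')$ lie in the corresponding $D_j$-separated cubes, so $|\phi(z)-\phi(z')|\ge\gap(S_{Q^z_{j+1}},S_{Q^{z'}_{j+1}})\ge D_j\ge\diam Q\ge|z-z'|$. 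Hence $\phi$ is injective and distance non-decreasing, and $f_0:=\phi^{-1}$ is $1$-Lipschitz from $\phi(\leaves(\mathcal{T}))$ onto $\leaves(\mathcal{T})$. Assuming $\sigma_0>0$ (if $\sigma_0=0$ then $\phi$ is constant, $\leaves(\mathcal{T})$ is a single point, and one maps $\{0\}$ onto it), put $E:=\overline{\sigma_0^{-1}\phi(\leaves(\mathcal{T}))}$, a compact subset of $\ell^m_\infty\cap[0,1]^m$. The map $x\mapsto f_0(\sigma_0 x)$ is $\sigma_0$-Lipschitz on the dense subset $\sigma_0^{-1}\phi(\leaves(\mathcal{T}))$ of $E$ with values in the complete metric space $\overline{\Top(\mathcal{T})}$, so it extends to a $\sigma_0$-Lipschitz, hence $S$-Lipschitz, map $f:E\to\overline{\Top(\mathcal{T})}\subset\XX$ with $f(E)\supseteq\leaves(\mathcal{T})$.

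The main obstacle is setting up the recursive cube assignment correctly: defining the side lengths $\sigma_j$ by the right infinite series, verifying the one-step recursion and $\sigma_j\to 0$, and checking that the buffered grid simultaneously fits inside $S_Q$ and keeps sibling cubes $D_j$-apart. Everything after that---taking nested intersections to define $\phi$, the separation estimate, and the extension by completeness---is routine; the only subtleties worth flagging are that overlapping sets in $\mathcal{T}$ force us to \emph{choose} a branch through each leaf, and that compactness of $\overline{\Top(\mathcal{T})}$ (rather than completeness of $\XX$) is what makes the final Lipschitz extension land back inside $\XX$.
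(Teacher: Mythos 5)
Your argument is correct, but it takes a genuinely different route from the paper. The paper's proof treats Lemma \ref{l:pack} as a black box: for each depth $l$ it produces a finite set $E_l\subset[0,1]^m$ and an $S$-Lipschitz map $f_l$ onto a set of representative points of $\mathcal{T}_l$, and then passes to the limit by viewing the graphs of the $f_l$ as elements of the hyperspace of closed subsets of $[0,1]^m\times\XX$ and invoking Blaschke's selection theorem (an Arzel\`a--Ascoli argument with variable domains); containment of $\leaves(\mathcal{T})$ in the limiting image is then extracted through Lemma \ref{l:branch} plus a further subsequence argument, which is needed precisely because the finite-level maps are not assumed injective in a way compatible with the limit. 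You instead run the packing construction directly on the infinite tree: the tail sums $\sigma_j$ (which satisfy the same recursion as \eqref{previous-s}--\eqref{recursion-formula} and are bounded by the tails of \eqref{root-condition}) give level-uniform side lengths for a nested family of buffered cubes $S_Q$, each leaf is coded by the nested intersection along a chosen branch, and the separation-by-$D_j$ estimate makes the coding map $\phi$ distance non-decreasing, so $f_0=\phi^{-1}$ is $1$-Lipschitz on $\phi(\leaves(\mathcal{T}))$ and extends to the closure by completeness of $\overline{\Top(\mathcal{T})}$ after rescaling by $\sigma_0\leq S$. What your approach buys: it avoids hyperspace compactness entirely (you only use completeness of $\overline{\Top(\mathcal{T})}$, not compactness, so the hypothesis could be weakened, e.g.\ to $\XX$ complete), it produces a map that is injective on the preimage of the leaves, and it gives the marginally sharper Lipschitz constant $\sigma_0\leq S$. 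What the paper's approach buys is that the finite-depth combinatorics are quarantined inside Lemma \ref{l:pack}, so no infinite bookkeeping of cubes is needed. Two small points you should spell out: since the sets in $\mathcal{T}$ are multisets with a parent function, the cubes $S_Q$ must be assigned to tree \emph{nodes} (distinct children receive distinct cells even if equal as sets), and the ``largest index $j$ with $Q^z_j=Q^{z'}_j$'' exists because each node has a unique parent, so the agreement set of two branches is a proper initial segment of $\{0,1,2,\dots\}$; both are one-line verifications, and your degenerate case $\sigma_0=0$ is handled correctly since the distance non-decreasing estimate forces $\leaves(\mathcal{T})$ to be a single point there.
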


\begin{proof} Replacing $\XX$ with $\overline{\Top(\mathcal{T})}$, we may assume without loss of generality that $\XX$ is compact. Further, we may assume that $\leaves(\mathcal{T})\neq\emptyset$, otherwise there is nothing to show. In particular, $\mathcal{T}_l$ is nonempty for all $l\geq 0$. Note that the quantity $s=s(l)$ in \eqref{S-condition} is bounded above by $S$ in \eqref{root-condition} for all $l\geq 1$. For each $l\geq 1$, choose any set $F_l=\{x_Q:Q\in\mathcal{T}_l\text{ and }\Child(Q)\neq\emptyset\}$. After rescaling the domains of maps provided by Lemma \ref{l:pack}, for each $l\geq 1$, we can find a finite set $E_l\subset \ell^m_\infty\cap [0,1]^m$ and an $S$-Lipschitz map $f_l:E_l\rightarrow \XX$ such that $f_l(E_l)=F_l$.

To proceed, we employ a variation of the proof of the Arzel\`a-Ascoli theorem for a sequence of uniformly equicontinuous functions with \emph{variable domains}. Let $\YY=[0,1]^m\times \XX$ be equipped with the product metric and let the space $\mathfrak{C}(\YY)$ of nonempty closed subsets of $\YY$ be equipped with the Hausdorff metric. Since $[0,1]^m$ and $\XX$ are compact, so are $\YY$ and $\mathfrak{C}(\YY)$; see e.g.~\cite{Beer}. The latter fact is often called \emph{Blaschke's selection theorem}. Thus, the sequence of graphs $\Gamma_l=\{(x,f_l(x)):x\in E_l\}$ in $\mathfrak{C}(\YY)$ have a subsequence $\Gamma_{l_i}$ that converge to some set $\Gamma$ in $\mathfrak{C}(\YY)$ as $i\rightarrow\infty$. We claim that $\Gamma$ is also a graph. Indeed, suppose that $(x,y)$ and $(x,y')$ belong to $\Gamma$. Then we can find sequences $x_i,x'_i\in E_{l_i}$ such that $(x_i,f_{l_i}(x_i))\rightarrow (x,y)$ and $(x'_i,f_{l_i}(x'_i))\rightarrow (x,y')$ in $\YY$ as $i\rightarrow\infty$. Hence, by the uniform Lipschitz condition, $$|y-y'|\leq \liminf_{i\rightarrow\infty} |f_{l_i}(x_i)-f_{l_i}(x'_i)| \leq \liminf_{i\rightarrow\infty} S|x_i-x'_i|=0.$$ Write $E=\{x:(x,y)\in\Gamma\}\subset[0,1]^m$ and define $f:E\rightarrow\XX$ according to the rule $(x,f(x))\in\Gamma$ for all $x\in E$. The domain $E$ is compact, because $\Gamma$ is compact. It is easy to see that $f$ is $S$-Lipschitz. Given $x,x'\in E$, there exist sequences $(x_i,f_{l_i}(x_i))\rightarrow (x,f(x))$ and $(x'_i,f_{l_i}(x'_i))\rightarrow (x',f(x'))$ in $\YY$ as $i\rightarrow\infty$. Therefore,  $$|f(x)-f(x')|=\lim_{i\rightarrow\infty} |f_{l_i}(x_i)-f_{l_i}(x'_i)|\leq \liminf_{i\rightarrow\infty} S|x_i-x'_i|=S|x-x'|.$$ Finally, the image set $f(E)$ contains $\leaves(\mathcal{T})$ by Lemma \ref{l:branch} and compactness of $\YY$. Indeed, we may use the lemma, because $\sum_{j=0}^\infty D_j\leq S<\infty$ implies $\lim_{j\rightarrow \infty} D_j=0$. Thus, given $z\in\leaves(\mathcal{T})$,  there exists an infinite branch $(Q_j^z)_{j=0}^\infty$ in $\mathcal{T}$ such that $\{z\}=\bigcap_{j=0}^\infty Q_j^z$. In particular, the points $z_i:=x_{Q^{z}_{l_i}}\in F_{l_i}$ converge to $z$ in $\XX$ as $i\rightarrow\infty$. For each $i$, choose any point $x_i\in E_{l_i}$ such that $f_{l_i}(x_i)=z_i$. Since we do not assume each $f_{l_i}$ is injective, there is no reason to suspect that the sequence $x_i$ converges. Nevertheless, by compactness of $\YY$, we may find a subsequence $(x_{i_j},z_{i_j})$ that converges to some point $(x,y)\in\YY$ as $j\rightarrow\infty$. Of course, $y=z$, since $z_{i_j}\rightarrow z$ as $j\rightarrow\infty$. Further, since $(x_{i_j},z_{i_j})\in\Gamma_{l_{i_j}}$ for all $j$ and $\Gamma_{l_{i_j}}\rightarrow \Gamma$ in $\mathfrak{C}(\YY)$ as $j\rightarrow\infty$, the limit  point $(x,z)\in\Gamma$. That is, $x\in E$ and $z=f(x)\in f(E)$.
\end{proof}

\begin{remark}[no rounding]\label{r:no-ceil} If $C=\sum_{i=0}^\infty N_i^{-1/m}<\infty$, then \begin{equation}S\leq e^C\sum_{j=0}^\infty \left(\prod_{i=0}^j N_{j}^{1/m} \right) D_j,\end{equation} because $\lceil x\rceil \leq (1+x^{-1})x$ for all $x>0$ and $\prod_{i=0}^\infty (1+N_i^{-1/m})\leq e^C$. Thus, to check the hypothesis of Theorem \ref{t:pack} on a tree, in which the maximum number of children $N_j\rightarrow\infty$ quickly as $j\rightarrow\infty$, we can effectively ignore the ceiling function in \eqref{root-condition}. We will use this observation in \S\ref{s:rect}.\end{remark}

\begin{remark}\label{r:other-p} Let $1\leq p<\infty$. Because $\|x\|_\infty\leq \|x\|_p$ for all $x\in\RR^m$, both Lemma \ref{l:pack} and Theorem \ref{t:pack} remain valid with the domain of the map $f$ in the conclusion replaced by a set $E\subset \ell_p^m\cap[0,s]^m$ or $E\subset\ell_p^m\cap[0,1]^m$, respectively. \end{remark}

\begin{corollary}[Lipschitz maps from $\ell_2^m$] \label{c:pack} In addition to the hypothesis of Lemma \ref{l:pack} or Theorem \ref{t:pack}, suppose that $\XX$ is a Hilbert space. In the first setting, there exists a 1-Lipschitz map $g:\ell_2^m\cap [0,s]^m\rightarrow \XX$ such that $g([0,s]^m)\supset F$. In the second setting, there exists a $S$-Lipschitz map $g:\ell_2^m\cap [0,1]^m\rightarrow\XX$ such that $g([0,1]^m)\supset\leaves(\mathcal{T})$.   \end{corollary}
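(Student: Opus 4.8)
The plan is to reduce everything to Kirszbraun's extension theorem: if $H_1$ and $H_2$ are Hilbert spaces, $A\subseteq H_1$, and $\phi\colon A\rightarrow H_2$ is $L$-Lipschitz, then $\phi$ extends to an $L$-Lipschitz map $\widetilde\phi\colon H_1\rightarrow H_2$ (see, e.g., \cite[2.10.43]{Federer}). Here both $\ell_2^m=\RR^m$ (with the Euclidean norm) and the codomain $\XX$ are Hilbert spaces, so the theorem applies verbatim, with the sharp Lipschitz constant.

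First I would pass from the $\ell_\infty^m$ domains in Lemma \ref{l:pack} and Theorem \ref{t:pack} to $\ell_2^m$ domains via Remark \ref{r:other-p} (with $p=2$): since $\|\cdot\|_\infty\leq\|\cdot\|_2$ on $\RR^m$, the map $f$ produced in the first setting is 1-Lipschitz as a map $E\rightarrow F$ with $E\subseteq\ell_2^m\cap[0,s]^m$, and the map $f$ produced in the second setting is $S$-Lipschitz as a map $E\rightarrow\XX$ with $E\subseteq\ell_2^m\cap[0,1]^m$ and $f(E)\supseteq\leaves(\mathcal{T})$.

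Next I would apply Kirszbraun. In the first setting, extend $f$ to a 1-Lipschitz map $\widetilde g\colon\ell_2^m\rightarrow\XX$ and set $g:=\widetilde g|_{[0,s]^m}$; then $g$ is 1-Lipschitz and $g([0,s]^m)\supseteq g(E)=f(E)=F$, using $E\subseteq[0,s]^m$. In the second setting, apply Kirszbraun to the 1-Lipschitz map $S^{-1}f$ and multiply the resulting extension by $S$ (equivalently, use the $L$-Lipschitz form of the theorem directly) to obtain an $S$-Lipschitz map $\widetilde g\colon\ell_2^m\rightarrow\XX$ extending $f$; setting $g:=\widetilde g|_{[0,1]^m}$ gives an $S$-Lipschitz map with $g([0,1]^m)\supseteq f(E)\supseteq\leaves(\mathcal{T})$.

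There is no genuine obstacle here; the one point requiring care—and the reason the hypothesis ``$\XX$ is a Hilbert space'' cannot be relaxed to a general Banach or metric space target—is that Kirszbraun's theorem really does need the codomain to have the appropriate nonpositive-curvature property, and over a Hilbert codomain it holds with no loss in the Lipschitz constant, which is precisely what lets us carry the constants $1$ and $S$ through unchanged. Everything else (restricting the extension to the cube, and inheriting containment of $F$ or $\leaves(\mathcal{T})$ in the image from $f$) is routine bookkeeping.
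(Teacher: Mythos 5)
Your proposal is correct and follows essentially the same route as the paper: pass to an $\ell_2^m$ domain via Remark \ref{r:other-p}, extend by Kirszbraun's theorem \cite[2.10.43]{Federer} without loss in the Lipschitz constant, and restrict to the cube to get the containment of $F$ (resp.\ $\leaves(\mathcal{T})$) in the image. The only cosmetic difference is that you spell out the rescaling for the $S$-Lipschitz case, which the paper leaves implicit.
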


\begin{proof} Let $f:E\rightarrow \XX$ be given by Lemma \ref{l:pack}, taking the domain $E\subset \ell_2^m\cap[0,S]^m$. Since $\XX$ is a Hilbert space, we may apply Kirzbraun's theorem \cite[2.10.43]{Federer} to extend $f$ to a 1-Lipschitz map $h:\ell_2^m\rightarrow\XX$. Then the restriction $g:=h|_{\ell_2^m\cap [0,S]^m}$ is 1-Lipschitz and $g([0,S]^m)=h([0,S]^m)\supset h(E)=f(E)=F$. A similar argument works in the setting of Theorem \ref{t:pack}.\end{proof}

\begin{remark} Using packings with rotated squares or cubes, it is possible to improve Corollary \ref{c:pack} in certain situations. For example, let $\mathcal{T}$ be a tree of sets in a metric space $\XX$ such that $\Top(\mathcal{T})$ is contained in a compact set and $N_j=5$ for all $j\geq 0$. Because $\lceil \sqrt{5}\rceil = 3$, Corollary \ref{c:pack} tells us that $\leaves(\mathcal{T})$ is contained in a Lipschitz image of a Euclidean square if \begin{equation}\sum_{j=0}^\infty 3^{j+1} D_j<\infty.\end{equation} However, allowing rotations, 5 unit squares in the Euclidean plane can be packed inside a square of side length $2+\frac{1}{\sqrt{2}}$ by arranging four axis-parallel squares around a square that is rotated by $45^\circ$. Thus, \emph{mutatis mutandis}, the proofs above show that $\leaves(\mathcal{T})$ sits inside a Lipschitz image of a Euclidean square if \begin{equation}\label{5-rate-2}\sum_{j=0}^\infty (2+\tfrac{1}{\sqrt{2}})^{j+1} D_j<\infty.\end{equation} See Figure \ref{fig:5-packing-2}. The problem of finding the minimal side length $s$ of a square containing $N$ unit squares (with rotations) is open except for some sporadic values of $N$. See the survey \cite{square-packing-survey} for an illuminating introduction.

\begin{figure}\begin{center}\includegraphics[width=.3\textwidth]{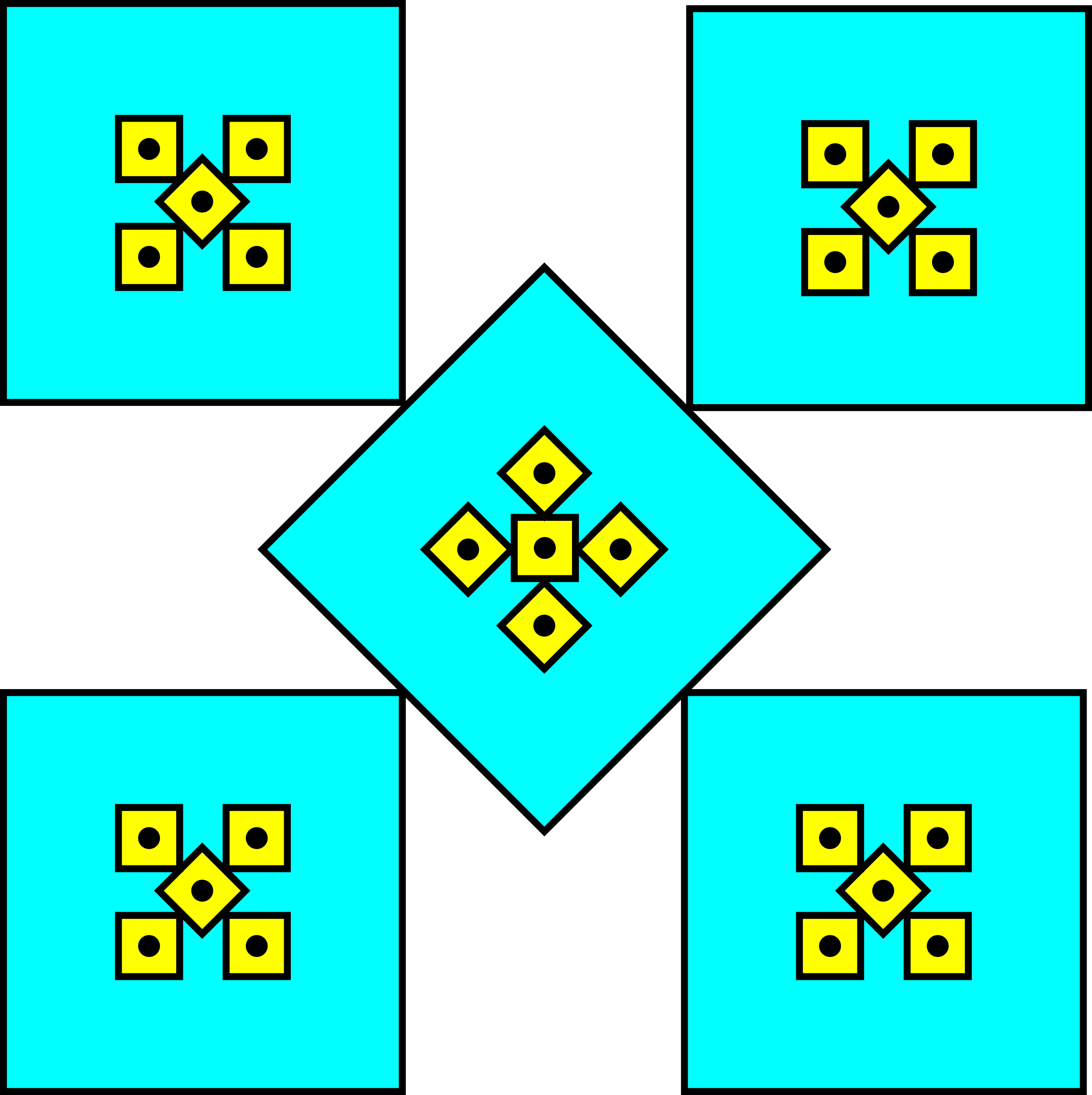}\end{center}\caption{Modified construction of the Lipschitz map $f$ in Lemma \ref{l:pack} with dimension $m=2$, tree depth $l=2$, and maximal number of children of sets in $\mathcal{T}_0$ and $\mathcal{T}_1$ given by $N_0=N_1=5$. The side length of each of yellow square is $D_1$ and the side length of each ``block'' of yellow squares is $(2+\frac{1}{\sqrt{2}})D_1$. The side length of each blue square is equal to the side length of a yellow block plus $D_0$. All together, the domain sits inside a square of side length $(2+\frac{1}{\sqrt{2}})D_0+(2+\frac{1}{\sqrt{2}})^2 D_1$.}\label{fig:5-packing-2}\end{figure}

A clever reader may also notice that $\leaves(\mathcal{T})$ is contained in a Lipschitz image of a Euclidean square when $N_j=5$ for all $j$ if  $\sum_{j=0}^\infty 5^{2j+1}D_{2j}<\infty$. (Skip odd generations.) This is a further improvement over \eqref{5-rate-2}, because $5<(2+\frac{1}{\sqrt{2}})^2$.
\end{remark}

\begin{remark}[square packings and H\"older maps] \label{r:holder} It is easy to modify the statements and proofs above to produce H\"older maps in place of Lipschitz maps. To produce maps satisfying $|f(x)-f(y)|\leq H|x-y|^{1/s}$ simply replace the quantity $D_j$ by $D_j^s$.\end{remark}

\section{Densities, Assouad dimension, and rectifiability}\label{s:density}

As an initial application of Theorem \ref{t:pack}, we extend Martin and Mattila's \cite[Theorem 4.1(1)]{MM1988} on the rectifiability of Hausdorff measures on $s$-sets in $\RR^d$ when $s<m$ (also see \cite[Theorem C]{BV}) to general measures on complete metric spaces. To build a tree, \cite{MM1988} and \cite{BV} each used the Besicovitch covering theorem, which is not available in a general metric space. We are able to avoid reliance on a covering theorem by utilizing $r$-nets, i.e.~maximal subsets of $r$-separated points.

\begin{theorem}\label{t:mm} Let $\mu$ be a finite Borel measure on a complete metric space $\XX$. For every integer $m\geq 1$ and real-valued dimension $s\in[0,m)$, the measure \begin{equation}\label{lu-density}\mu\res\left\{x\in\XX: 0<\liminf_{r\downarrow 0} \frac{\mu(B(x,r))}{r^s}\leq \limsup_{r\downarrow 0} \frac{\mu(B(x,r))}{r^s}<\infty\right\}\end{equation} is $m$-rectifiable. In fact, the set described in \eqref{lu-density} is contained in a countable union of images of Lipschitz maps of the form $f:E\subset[0,1]^m\rightarrow\XX$.
\end{theorem}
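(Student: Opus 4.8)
The strategy is to reduce the statement to an application of Theorem~\ref{t:pack} after restricting to a set on which the density ratios are uniformly controlled. First I would decompose $\XX$ into countably many pieces on which the densities are comparable to fixed constants. For positive integers $p$, set
\begin{equation*}
A_p:=\left\{x\in\XX: \tfrac1p \le \liminf_{r\downarrow 0}\frac{\mu(B(x,r))}{r^s}\le \limsup_{r\downarrow0}\frac{\mu(B(x,r))}{r^s}\le p,\ \text{and}\ \tfrac{\mu(B(x,r))}{r^s}\in[\tfrac{1}{2p},2p]\ \text{for all }0<r<\tfrac1p\right\}.
\end{equation*}
The set in \eqref{lu-density} is $\bigcup_p A_p$ up to a $\mu$-null set, so it suffices to show each $A_p$ is covered by countably many Lipschitz images of subsets of $[0,1]^m$; after further partitioning $A_p$ into boundedly many pieces of small diameter, I may assume $\diam A_p$ is as small as I like. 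So fix such a bounded set $A=A_p$ of small diameter $\Delta$, with $\mu(B(x,r))\le 2p\,r^s$ and $\mu(B(x,r))\ge \tfrac{1}{2p}\,r^s$ for all $x\in A$ and $0<r<\Delta$.

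Next I would build a tree of sets $\mathcal T$ whose leaves contain (or equal the closure of) $A$, using $r$-nets as the excerpt suggests in place of the Besicovitch covering theorem. Fix a small ratio $b\in(0,1)$ (to be chosen). Inductively, having chosen a maximal $b^{\,j}\Delta$-separated subset $\{z\}$ of $A$ at scale $j$, let $\mathcal T_j$ consist of the closed balls $\overline B(z, 2b^{\,j}\Delta)$ over points $z$ in the net; declare $\overline B(z', 2b^{\,j+1}\Delta)$ a child of $\overline B(z,2b^{\,j}\Delta)$ if $z'$ belongs to the sphere-neighborhood so that the child ball is contained in the parent (this works since $2b^{\,j+1}\Delta + b^{\,j+1}\Delta \le 2b^{\,j}\Delta$ once $b\le \tfrac13$, and every net point at level $j+1$ lies within $b^{\,j}\Delta$ of some net point at level $j$ by maximality). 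Then $D_j \le 4b^{\,j}\Delta$. The key estimate is a bound on $N_j=\max_Q \#\Child(Q)$: the children of $\overline B(z,2b^{\,j}\Delta)$ correspond to $b^{\,j+1}\Delta$-separated points lying in $B(z, 3b^{\,j}\Delta)$, and standard packing gives at most
\begin{equation*}
\#\Child(Q) \le \frac{\mu(B(z, 4b^{\,j}\Delta))}{\min_{z'}\mu(B(z', \tfrac12 b^{\,j+1}\Delta))} \le \frac{2p\,(4b^{\,j}\Delta)^s}{\tfrac{1}{2p}(\tfrac12 b^{\,j+1}\Delta)^s}= C_{p,s}\, b^{-s},
\end{equation*}
a bound independent of $j$. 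So $N_j \le N := C_{p,s} b^{-s}$ for all $j$, and $\leaves(\mathcal T)\supset \overline A$ since $\diam\to 0$ and $A$ is covered at every level.

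Now I would verify the hypothesis \eqref{root-condition} of Theorem~\ref{t:pack}. Using Remark~\ref{r:no-ceil} (or directly, since $N$ is a fixed constant), the relevant series is controlled by
\begin{equation*}
S \lesssim \sum_{j=0}^\infty \left(\lceil N^{1/m}\rceil\right)^{j+1} D_j \lesssim_{N,\Delta}\ \sum_{j=0}^\infty \left(\lceil N^{1/m}\rceil\, b\right)^{j},
\end{equation*}
which converges precisely when $\lceil N^{1/m}\rceil\, b < 1$. Since $N = C_{p,s}\,b^{-s}$, we have $N^{1/m} = C_{p,s}^{1/m} b^{-s/m}$, so $\lceil N^{1/m}\rceil\, b \lesssim b^{\,1 - s/m}$; as $s<m$ this tends to $0$ as $b\downarrow 0$, so choosing $b$ small enough (depending only on $p,s,m$) makes $S<\infty$. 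The closure of $\Top(\mathcal T)$ is compact because $\XX$ is complete and $\Top(\mathcal T)$ is a closed ball of a totally bounded set — here I should note $\overline A$ is totally bounded, which follows from the net construction (at each scale finitely many balls cover it, since $\mu$ finite and balls have measure bounded below). Theorem~\ref{t:pack} then yields a compact $E\subset [0,1]^m$ and an $S$-Lipschitz $f:E\to\XX$ with $f(E)\supset \leaves(\mathcal T)\supset A$. Running this over the countably many pieces $A_p$ (and their small-diameter subdivisions) covers the set \eqref{lu-density} by countably many images of Lipschitz maps $f:E\subset[0,1]^m\to\XX$, proving $m$-rectifiability.

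\textbf{Main obstacle.} The delicate point is the interplay between the two free parameters: the contraction ratio $b$ controls $D_j$ geometrically but inflates $N_j$ like $b^{-s}$, and one needs the combined growth rate $\lceil N^{1/m}\rceil\, b$ to stay below $1$. This is exactly where $s<m$ is used and where the square-packing gain (an $m$-th root on the number of children) is essential — with the trivial bound $N$ instead of $N^{1/m}$ the series would diverge for every $b$. A secondary technical nuisance is making the parent-child containment and the covering property $\leaves(\mathcal T)\supset A$ hold simultaneously with honest constants; this forces the ``$2\times$'' inflation in the ball radii and the restriction $b\le\tfrac13$, but none of it affects the asymptotics.
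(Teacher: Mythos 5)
Your proposal follows essentially the same route as the paper: decompose the set \eqref{lu-density} into pieces with uniform two-sided density bounds at all small scales, build a tree from maximal $r$-nets at geometrically decreasing scales, bound the number of children by a constant of the form $C_{p,s}$ times (scale ratio)$^{-s}$ independent of the level, and use $s<m$ to make the series \eqref{root-condition} converge for an extreme enough scale ratio before invoking Theorem \ref{t:pack}; this is exactly the paper's argument with $G_n$, $\rho_l$-nets, and $\sigma_l$-balls.

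One technical point needs repair: you take the tree sets to be closed balls $\overline B(z,2b^j\Delta)$ of $\XX$, so $\overline{\Top(\mathcal{T})}$ is a closed ball of $\XX$, which in a general complete metric space need not be compact (your remark that it is ``a closed ball of a totally bounded set'' does not help, since the ball is not contained in $A$ and the density bounds say nothing about points of $\XX\setminus A$). The fix is the paper's device: use $A\cap B(z,\cdot)$ (or $\overline A\cap B(z,\cdot)$) as the tree sets, with top set $\overline A$, whose compactness you have already justified via the lower density bound, finiteness of $\mu$, and completeness of $\XX$; the child-counting and the series estimate are unaffected. (Also, by maximality of the level-$j$ net, a level-$(j{+}1)$ net point lies within $b^j\Delta$, not $b^{j+1}\Delta$, of a level-$j$ net point, so the containment requires $b\leq 1/2$ rather than your stated inequality; harmless since you send $b\downarrow 0$ anyway.)
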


\begin{proof} For each $n\geq 2$, let $G_n:=\{x\in\XX: r^s/n\leq \mu(B(x,r))\leq nr^s\text{ for all }0<r<1/n\}$. Because the set of points with positive lower $s$-density and finite upper $s$-density can be written as $\bigcup_{n=2}^\infty G_n$, it suffices to fix $G=G_n$ for some $n\geq 2$ and prove that there exists a compact set $E\subset[0,1]^m$ and a Lipschitz map $f:E\rightarrow\XX$ such that $f(E)\supset G$. The Lipschitz constant of $f$ will depend only on $\mu(\XX)$, $m$, $s$, $n$, and $\diam G$.

\emph{The set $G$ is compact.} Since $\XX$ is complete, it suffices to prove that $G$ is closed and totally bounded. To show that $G$ is closed, suppose that $x_1,x_2,\dots\in G$ and $\lim_{k\rightarrow\infty} x_k=x$ for some $x\in \XX$ and let $0<r<1/n$. Since $B(x_k,r-|x-x_k|)\subset B(x,r)\subset B(x_k,r+|x-x_k|)$ and $0<r-|x-x_k|\leq r+|x-x_k|<1/n$ when $k$ is sufficiently large, we have \begin{equation*}\frac{1}{n}r^s = \lim_{k\rightarrow\infty} \frac{1}{n}(r-|x-x_k|)^s\leq \liminf_{k\rightarrow\infty} \mu(B(x_k,r-|x-x_k|))\leq \mu(B(x,r))\end{equation*} and \begin{equation*} \mu(B(x,r)) \leq \liminf_{k\rightarrow\infty} \mu(B(x_k,r+|x-x_k|))\leq \lim_{k\rightarrow\infty} n(r+|x-x_k|)^s=nr^s.\end{equation*} Hence $x\in G$ and the set $G$ is closed. To show that $G$ is totally bounded, let $0<r<2/n$ and let $G_r$ be a maximal set of points in $G$ such that $|x-y|>r$ for all distinct $x,y\in G$. The set $G_r$ is finite, because $$\frac{1}{n}\left(r/2\right)^s\cdot\#G_r \leq \sum_{x\in G_r} \mu\left(B\left(x,r/2\right)\right)\leq \mu(\XX)<\infty.$$  As $G\subset \bigcup_{x\in G_r} B(x,r)$ and $\#G_r<\infty$ for any $0<r<2/n$, it follows that $G$ can be covered by finitely many balls of any prescribed radius. That is, $G$ is totally bounded.

\emph{We build a tree of sets $\mathcal{T}=\bigcup_{l=0}^\infty \mathcal{T}_l$ with $\leaves(\mathcal{T})=G$.} Let $b\gg 1$ be a large number, to be specified below, with $b\rightarrow\infty$ as $s\rightarrow m$ or $n\rightarrow\infty$. At the top level, assign $\mathcal{T}_0=\{G\}$. For each $l\geq 1$, assign $\rho_l:=(1/n)b^{-l}$, $\sigma_l:=\rho_l+\rho_{l+1}+\cdots=[b/(b-1)](1/n)b^{-l}$, and $$\mathcal{T}_l:=\{G\cap B(x,\sigma_l):x\in G_{\rho_l}\}.$$ We define the parental structure as follows. As $\#\mathcal{T}_0=1$, the parent of any set in $\mathcal{T}_1$ is automatically determined. For each $l\geq 2$ and $x\in G_{\rho_l}$, choose any $x^\uparrow\in G_{\rho_{l-1}}$ such that $|x-x^\uparrow|\leq \rho_{l-1}$; then assign $G\cap B(x,\sigma_l)^\uparrow = G\cap B(x^\uparrow,\sigma_{l-1})$. Clearly, every set in $\mathcal{T}_l$ is contained in its parent, since $\rho_{l-1}+\sigma_l=\sigma_{l-1}$. Thus, $\mathcal{T}=\bigcup_{l=0}^\infty \mathcal{T}_l$ is a tree of sets in the sense of Definition \ref{def:tree}. Observe that $\bigcup\mathcal{T}_l=G$ for every $l\geq 0$. It follows that $\leaves(\mathcal{T})=\bigcap_{l=0}^\infty \bigcup\mathcal{T}_l=G$.

\emph{Rectifiability.} For each $l\geq 0$, let $N_l=\max_{F\in\mathcal{T}_l}\#\Child(F)$ and $D_l=\max_{F\in\mathcal{T}_l}\diam F$. Of course, $N_0=\#\mathcal{T}_1=\#G_{\rho_1}\lesssim_{s,n,b} \mu(\XX)$, $D_0=\diam G$, and $D_l\leq 2\sigma_l$ for all $l\geq 1$. To bound $N_l$ for $l\geq 1$, let $F=G\cap B(z,\sigma_l)\in \mathcal{T}_l$ for some $z\in G_{\rho_l}$. Using the pairwise disjointness of $\{B(x,\frac{1}{2}\rho_{l+1}):x\in G_{\rho_{l+1}}\}$ and the definition of $G$, we have \begin{equation*} \frac{1}{n}\left(\frac{1}{2n} b^{-(l+1)}\right)^s \#\Child(F) \leq \sum_{\Child(F)}\mu(B(x,\tfrac{1}{2}\rho_{l+1}))\leq \mu(B(z,\sigma_l))\leq n\left(\frac{b}{(b-1)n}b^{-l}\right)^s.\end{equation*} Hence \begin{equation*}N_l \leq n^2\left(\frac{2b^2}{b-1}\right)^s=:P_l\quad\text{for all }l\geq 1.\end{equation*} Note that, since $n> 1$ and $b\geq 2$ (at the end of the day, much bigger than 2), $$\lceil P_l^{1/m}\rceil \leq P_l^{1/m}+1=n^{2/m}\left(\frac{2b^2}{b-1}\right)^{s/m}+1<n^{2/m}(4b)^{s/m}+1<n^{2/m}(5b)^{s/m}=:C.$$ Thus, recalling that $s<m$, we see that \begin{align*}\sum_{l=1}^\infty C^l\sigma_l \leq \frac{b}{n(b-1)}\sum_{l=1}^\infty \left(\frac{n^{2/m}(5b)^{s/m}}{b}\right)^l<\infty \end{align*} provided that $b$ is large enough depending only on $m$, $s$, and $n$. Therefore, \begin{align*}
S:=\sum_{l=0}^\infty \left(\prod_{j=0}^l \lceil N_j^{1/m}\rceil\right)D_j < \lceil N_0^{1/m}\rceil\left(D_0+ 2\sum_{l=1}^\infty C^l\sigma_l\right)<\infty.\end{align*} By Theorem \ref{t:pack}, it follows that $G=\leaves(\mathcal{T})\subset f(E)$ for some compact set $E\subset[0,1]^m$ and some $S$-Lipschitz map $f:E\rightarrow\XX$. Reviewing dependencies, we see that the Lipschitz constant $S\lesssim_{m,s,n}(1+\mu(\XX)^{1/m})(1+\diam G)$.
\end{proof}

In a metric space $\XX$, a nonempty set $F\subset\XX$ is said to be \emph{$s$-homogeneous} if there exists $C>1$ such that for every bounded set $A\subset F$ and for every $\delta\in(0,1)$, there exist $C\delta^{-s}$ or fewer sets $A_1,\dots,A_n\subset F$ with $\diam A_i\leq \delta\diam A$ for all $i$ such that $A\subset A_1\cup\cdots\cup A_n$. That is, bounded subsets of $F$ can be covered by a controlled number of uniformly smaller sets. The \emph{Assouad dimension} of $F$ (see e.g.~\cite{Luukk} or \cite{Fraser-book}) can be defined as \begin{equation} \dim_A F:=\inf\{s\geq 0:\text{$F$ is $s$-homogeneous}\}.\end{equation} It is easy to see that $\dim_H F\leq \dim_A F=\dim_A{\overline{F}}$ for all $F\subset\XX$. Also, if $\dim_A \XX<\infty$, then every bounded set in $\XX$ is totally bounded and the space $\XX$ is separable. In particular, on any metric space $\XX$ with $0<\dim_A \XX<\infty$, we can find a countable dense subset $F$ of $\XX$ and this set has $\dim_H F=0< \dim_A\XX=\dim_A F$.

As a second application of the square packing construction, we extend the Lipschitz case of Badger and Vellis' \cite[Theorem 3.2]{BV} from $\RR^d$ to complete metric spaces.

\begin{theorem}\label{t:null} Let $\XX$ be a complete metric space. If $F\subset\XX$ is nonempty, $m\geq 1$ is an integer, and $\dim_A F<m$, then there exists a closed set $E\subset\RR^m$ and a Lipschitz map $f:E\rightarrow\XX$ such that $f(E)\supset F$. When $F$ is bounded, we may take $E\subset[0,1]^m$.\end{theorem}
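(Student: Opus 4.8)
The plan is to mimic the structure of the proof of Theorem \ref{t:mm}, replacing the density-based counting estimates with the $s$-homogeneity hypothesis to control the branching of a tree whose leaves recover $F$. First I would reduce to the bounded case: pick $s$ with $\dim_A F < s < m$, and since $\dim_A F = \dim_A \overline{F}$ and every bounded set in a space of finite Assouad dimension is totally bounded, note that $\overline{F}$ is complete and (locally) totally bounded, hence we may work with $\overline{F}$ in place of $\XX$ and assume $F$ is closed. For the unbounded case, write $F = \bigcup_{k} (F \cap \overline{B(x_0,k)})$ as a countable union of bounded closed pieces, apply the bounded case to each, and concatenate the resulting Lipschitz maps on disjoint translated copies of their domains inside $\RR^m$ (this is exactly the kind of ``place domains far apart'' gluing used in Lemma \ref{l:pack}); so the crux is the bounded case with $E \subset [0,1]^m$.

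For the bounded case I would build a tree of sets $\mathcal{T} = \bigsqcup_{l\geq 0} \mathcal{T}_l$ with $\leaves(\mathcal{T}) = \overline{F}$. Set $\mathcal{T}_0 = \{\overline{F}\}$, fix a scale factor $b > 1$ to be chosen large, and at level $l$ take $\rho_l = b^{-l} \diam F$ and a maximal $\rho_l$-separated net $F_{\rho_l}$ in $\overline{F}$, letting $\mathcal{T}_l = \{\overline{F} \cap B(x, \sigma_l) : x \in F_{\rho_l}\}$ where $\sigma_l = \sum_{j\geq l}\rho_j = \frac{b}{b-1}\rho_l$, with parents assigned by choosing for each $x \in F_{\rho_l}$ some $x^\uparrow \in F_{\rho_{l-1}}$ within $\rho_{l-1}$. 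As in Theorem \ref{t:mm}, $\rho_{l-1} + \sigma_l = \sigma_{l-1}$ guarantees containment in parents, $\bigcup \mathcal{T}_l = \overline{F}$ for every $l$, and hence $\leaves(\mathcal{T}) = \overline{F} \supset F$. The diameter bound is $D_l \leq 2\sigma_l \asymp_b b^{-l}\diam F$, decaying geometrically. The essential new input is the child count: a set $\overline{F}\cap B(z,\sigma_l) \in \mathcal{T}_l$ is a bounded subset $A$ of $F$ (more precisely of $\overline{F}$, but $s$-homogeneity passes to closures) of diameter $\lesssim_b b^{-l}\diam F$, and each child corresponds to a point of the $\rho_{l+1}$-net inside it; applying $s$-homogeneity with $\delta \asymp_b 1/b$ covers $A$ by $\lesssim_{C} b^{s}$ sets of diameter $\lesssim \rho_{l+1}/2$, and since each such set contains at most one point of a $\rho_{l+1}$-separated net, we get $N_l = \max_{F'\in\mathcal{T}_l} \#\Child(F') \lesssim_{C} b^{s}$ for all $l \geq 1$ (and $N_0$ is finite by total boundedness of $\overline{F}$, depending on $\diam F$ and $b$).

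With $N_l \lesssim_{C,s} b^{s}$ uniformly in $l$, the ceiling $\lceil N_l^{1/m}\rceil$ is bounded by some $C' \asymp b^{s/m} + 1$, and then $\sum_{l=0}^{\infty}\big(\prod_{j=0}^{l}\lceil N_j^{1/m}\rceil\big) D_l \lesssim \sum_{l} (C')^{l}\cdot b^{-l}\diam F$, which converges as soon as $C'/b < 1$, i.e.\ $b^{s/m}/b + b^{-1} < 1$; since $s/m < 1$ this holds for all sufficiently large $b$ (depending only on $m$, $s$, and the homogeneity constant $C$). So the hypothesis $S < \infty$ of Theorem \ref{t:pack} is satisfied, and since $\overline{\Top(\mathcal{T})} = \overline{F}$ is compact, Theorem \ref{t:pack} yields a compact $E \subset \ell^m_\infty \cap [0,1]^m$ and an $S$-Lipschitz $f : E \to \XX$ with $f(E) \supset \leaves(\mathcal{T}) = \overline{F} \supset F$. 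The main obstacle I anticipate is purely bookkeeping: being careful that the $s$-homogeneity covering estimate, which is stated for bounded subsets $A$ with covering sets of diameter $\leq \delta \diam A$, interacts correctly with net-separation at an \emph{absolute} scale $\rho_{l+1}$ rather than a relative one — one must bound $\diam A$ from below away from $0$ or, more cleanly, apply homogeneity inside the slightly larger ball $B(z, 2\sigma_l)$ and choose $\delta$ as a fixed function of $b$ so that $\delta \cdot \diam B(z,2\sigma_l) \leq \rho_{l+1}/2$; this forces $b$ large but causes no circularity, exactly as in Theorem \ref{t:mm}. Everything else — closedness of the relevant sets, finiteness of $N_0$, tracking that the Lipschitz constant depends only on $m$, $\dim_A F$ (via $C$), and $\diam F$ — is routine.
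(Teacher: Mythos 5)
Your proposal is correct, but it takes a somewhat different route from the paper in both halves. For the bounded case, the paper does not use nets at all: it feeds the $s$-homogeneity covering directly into the tree, taking $\mathcal{T}_0=\{F\}$ and, for each $Q\in\mathcal{T}_l$, letting the children be the sets $Q\cap A_1,\dots,Q\cap A_N$ produced by homogeneity with $\delta=b^{-1}$, so that $N_l\leq Cb^s$ and $D_l\leq b^{-l}\diam F$ fall out immediately and $\leaves(\mathcal{T})=F$ itself; your net-based tree (modeled on Theorem \ref{t:mm}, with homogeneity used only to count net points) also works and yields $\leaves(\mathcal{T})=\overline{F}\supset F$, but it forces exactly the relative-versus-absolute-scale bookkeeping you flag, which the paper's construction avoids entirely --- this is what the paper means by homogeneity being ``perfectly suited to building uniform trees.'' For the unbounded case, the paper exhausts $F$ by $F_n=F\cap B(x_0,n)$, rescales the domains to $[0,n]^m$ so that the Lipschitz constants become a single $L=L(m,s,C)$, and then passes to a limit map via the Blaschke selection theorem and a diagonalization; your alternative of concatenating the pieces on disjoint, widely separated translated cubes in $\RR^m$ is legitimate and arguably more elementary (a locally finite union of compacta is closed, and gaps growing like $\max(j,k)$ handle the cross-piece Lipschitz estimate since the $k$-th image lies in $\overline{B(x_0,k)}$), but note that it too requires the same rescaling step you did not make explicit: the bounded-case Lipschitz constant grows like $\diam F_k\sim k$ on a unit-cube domain, so without stretching the $k$-th domain to side length $\sim k$ (and spacing the copies proportionally) the glued map would not be Lipschitz. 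With that one detail supplied, both your bounded-case and unbounded-case arguments are complete, and the dependence of the final constant only on $m$, $s$, and $C$ comes out the same way as in the paper.
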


\begin{proof} The proof for bounded sets is even easier than the proof of Theorem \ref{t:mm}, as the definition of Assouad dimension is perfectly suited to building uniform trees. The proof for unbounded sets will follow from estimates on the Lipschitz constant in the bounded case and effective use of the Attouch-Wets topology (see e.g.~\cite{Beer} or \cite{lsa}).

Suppose that $F\subset\XX$ is nonempty, bounded, and $\dim_A F<m$ for some integer $m\geq 1$. Note for later that the closure $\overline{F}$ of $F$ is compact, since $\dim_A \overline{F}=\dim_A F<\infty$ and $\overline{F}$ is bounded imply that $\overline{F}$ is totally bounded and $\XX$ is complete implies that $\overline{F}$ is complete. Now, since $\dim_A F<m$, there exists $s<m$ and $C>1$ such that $F$ is $s$-homogeneous with associated constant $C>1$. Let $b\gg C$ be a large number. We aim to build a tree a sets $\mathcal{T}=\bigcup_{l=0}^\infty \mathcal{T}_l$ with $\leaves(\mathcal{T})=F$. Assign $\mathcal{T}_0:=\{F\}$. For the induction step, suppose that we have defined $\mathcal{T}_l$ for some $l\geq 0$ so that $\diam Q\leq b^{-l}\diam F$ for all $Q\in\mathcal{T}_l$. To define $\mathcal{T}_{l+1}$, it suffices to define the set of children for each $Q\in\mathcal{T}_l$. Given $Q\in\mathcal{T}_l$, use $s$-homogeneity of $F$ with $\delta=b^{-1}$ to find $C b^s$ or fewer sets $A_1,\dots, A_N\subset F$ such that $Q\subset A_1\cup\dots\cup A_N$ and $\diam A_i\leq b^{-1}\diam Q\leq b^{-(l+1)}\diam F$ for all $i$; then assign $\Child(Q):=\{Q\cap A_1,\dots,Q\cap A_N\}$. This completes the definition of $\mathcal{T}$. For every level $l\geq 1$, we have $F\supset \bigcup\mathcal{T}_l\supset \bigcup\mathcal{T}_{l-1}\supset\cdots \supset \bigcup\mathcal{T}_0 = F$. Thus, $\leaves(\mathcal{T})=\bigcap_{l=0}^\infty \bigcup\mathcal{T}_l=F$, as desired. From construction, we see that for every $l\geq 0$, $$N_l:=\sup_{Q\in\mathcal{T}_l}\#\Child(Q)\leq Cb^{s}\quad\text{and}\quad D_l:=\sup_{Q\in\mathcal{T}_l}\diam Q\leq b^{-l}\diam F.$$ Choose $t\in(s,m)$, for concreteness say $t:=(s+m)/2$. Then, as long as $b\gg C$, we can bound $\lceil N_l^{1/m}\rceil \leq C^{1/m}b^{s/m}+1\leq (C^{1/m}+1)b^{s/m}\leq b^{t/m}$. Hence \begin{equation} S:=\sum_{j=0}^\infty \left(\prod_{i=0}^j \lceil N^{1/m}_i\rceil\right) D_j \leq b^{t/m}\diam F\sum_{j=0}^\infty b^{((t/m)-1)j}\lesssim_{m,s,C}\diam F<\infty.\end{equation} Therefore, by Theorem \ref{t:pack}, we can find a compact set $E\subset[0,1]^m$ and a $L$-Lipschitz map $f:E\rightarrow\XX$ such that $f(E)\supset \leaves(\mathcal{T})=F$, where $L\lesssim_{m,s,C} \diam F$.

Suppose that $F\subset\XX$ is unbounded and $\dim_A F<m$. Then we can find $s<m$ such that $F$ is $s$-homogeneous with associated constant $C>1$. Choose any base point $x_0\in F$. Then $F_n:=F\cap B(x_0,n)$ is also $s$-homogeneous with associated constant $C$ for each $n\geq 1$. Since $\diam F_n\leq 2n$ for each $n\geq 1$, after rescaling the domain of the maps from the bounded case, we can find a constant $L=L(m,s,C)$, compact sets $E_n\subset[0,n]^m$, and $L$-Lipschitz maps $g_n:E_n\rightarrow\XX$ such that $g_n(E_n)\supset F_n$ for each $n\geq 1$. Without loss of generality, replacing each set $E_n$ by $E_n\cap g_n^{-1}(\overline F_n)$, we may assume that $$g_n(E_n)=\overline{F_n}\subset\overline{F}\quad\text{for each }n\geq 1.$$ At this stage, it is theoretically possible that the domains of $g_n$ escape to infinity as $n\rightarrow\infty$. To correct for this, first choose points $w_n\in E_n$ such that $g_n(w_n)=x_0$; then define sets $\tilde E_n:=E_n-w_n\subset[-n,2n]^m$ and functions $\tilde g_n:\tilde E_n\rightarrow \XX$ by setting $\tilde g_n(x):=g_n(x+w_n)$. For all $n$, the function $\tilde g_n$ is $L$-Lipschitz, the image $\tilde g_n(\tilde E_n)=\overline{F_n}\subset\overline{F}$, and we have $0\in \tilde E_n$ and $\tilde g_n(0)=x_0$. In particular, neither the domains $\tilde E_n$ nor the graphs of $\tilde g_n$ escape to infinity as $n\rightarrow\infty$. Because $\RR^m$ and $\YY=\RR^m\times \overline{F}$ are proper metric spaces (i.e.~closed balls are compact), we can use the Attouch-Wets analogue of the Blaschke selection theorem\footnote{see \cite[Theorem 2.5]{lsa} or combine Theorems 3.1.4, 3.1.7, 5.1.10, 5.2.10, and 5.2.12 in \cite{Beer}} and mimic the argument from the proof of Theorem \ref{t:pack} above to produce a closed set $\tilde E\subset\RR^m$ and an $L$-Lipschitz map $f:\tilde E\rightarrow\overline{F}$ such that $f(0)=x_0$. Moreover, because $\overline{F}\supset \tilde g_n(\tilde E_n)\supset F_n\supset F_k$ for all $n\geq k$, it easily follows that $f(\tilde E)=\overline{F}\supset F$.
\end{proof}

In view of Remark \ref{r:holder}, one can also extend the H\"older case of \cite[Theorem 3.2]{BV} from $\RR^d$ to complete metric spaces.

\begin{theorem}\label{t:null-holder} Let $\XX$ be a complete metric space. If $F\subset\XX$ is nonempty, $m\geq 1$ is an integer, and $\dim_A F<sm$ for some $s>0$, then there exist a closed set $E\subset\RR^m$, a continuous map $f:E\rightarrow\XX$, and a constant $H$ such that $f(E)\supset F$ and $|f(x)-f(y)|\leq H|x-y|^{1/s}$ for all $x,y\in E$. When $F$ is bounded, we may take $E\subset[0,1]^m$.\end{theorem}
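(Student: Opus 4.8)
The plan is to mimic the proof of Theorem~\ref{t:null} almost verbatim, substituting $s$-homogeneity at scale $\delta^{1/s}$ in place of scale $\delta$ and invoking the H\"older version of the square packing construction from Remark~\ref{r:holder}. First I would reduce to the bounded case exactly as before: if $F$ is bounded with $\dim_A F<sm$, then $\overline F$ is totally bounded (since $\dim_A\overline F=\dim_A F<\infty$) hence compact by completeness of $\XX$; the unbounded case then follows by writing $F=\bigcup_n F\cap B(x_0,n)$, applying the bounded case with a H\"older constant $H$ independent of $n$, and running the same Blaschke-selection-plus-diagonalization argument used at the end of the proof of Theorem~\ref{t:null} (the graphs now live in $[0,n]^m\times\XX$ and the map $f$ satisfies the uniform H\"older bound because each $g_n$ does).

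For the bounded case, I would build a tree $\mathcal{T}=\bigcup_{l\ge0}\mathcal{T}_l$ with $\leaves(\mathcal{T})=F$ just as in Theorem~\ref{t:null}, but choosing the contraction ratio differently. Fix $s'<sm$ with $F$ being $s'$-homogeneous with constant $C$, and pick a large base $b\gg C$. At step $l$, given $Q\in\mathcal{T}_l$ with $\diam Q\le b^{-l/s}\diam F$, apply $s'$-homogeneity with $\delta=b^{-1/s}$ to cover $Q$ by $N\le C b^{s'/s}$ sets $A_i\subset F$ with $\diam A_i\le b^{-1/s}\diam Q\le b^{-(l+1)/s}\diam F$, and set $\Child(Q)=\{Q\cap A_i\}$. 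This gives $N_l\le Cb^{s'/s}$ and $D_l\le b^{-l/s}\diam F$. Following Remark~\ref{r:holder}, to produce the $(1/s)$-H\"older map I replace $D_j$ by $D_j^{\,s}$ in the convergence test, so I must verify
\begin{equation*}
S:=\sum_{j=0}^\infty\Bigl(\prod_{i=0}^j\lceil N_i^{1/m}\rceil\Bigr)D_j^{\,s}<\infty.
\end{equation*}
Here $D_j^{\,s}\le b^{-j}(\diam F)^s$ and, choosing $t$ with $s'<t<sm$ (say $t=(s'+sm)/2$), for $b\gg C$ we get $\lceil N_j^{1/m}\rceil\le C^{1/m}b^{s'/(sm)}+1\le b^{t/(sm)}$, so $\prod_{i=0}^j\lceil N_i^{1/m}\rceil\le b^{(j+1)t/(sm)}$ and the series is dominated by $b^{t/(sm)}(\diam F)^s\sum_j b^{j(t/(sm)-1)}<\infty$ since $t<sm$.

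The last step is to feed this into the H\"older variant of Theorem~\ref{t:pack} (Remark~\ref{r:holder}), which yields a compact $E\subset[0,1]^m$ and a map $f:E\to\XX$ with $|f(x)-f(y)|\le H|x-y|^{1/s}$ for a constant $H\lesssim_{m,s',C}(\diam F)^s$ and $f(E)\supset\leaves(\mathcal{T})=F$. I expect the only point requiring any care is the bookkeeping in Remark~\ref{r:holder}: one should check that replacing $D_j$ by $D_j^{\,s}$ throughout the recursion \eqref{recursion-formula} genuinely produces a $(1/s)$-H\"older rather than merely Lipschitz estimate across different sub-blocks $\vec E_i$ — the key inequality $|f(\vec x)-f(\vec y)|\le D_j\le(\,\text{gap between blocks})$ becomes $|f(\vec x)-f(\vec y)|\le D_j\le(\,\text{gap})^{1/s}$, which is exactly why one packs cubes of side length $D_j^{\,s}$ rather than $D_j$. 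Everything else is a routine transcription of the proof of Theorem~\ref{t:null}, so I would simply remark that "the competent reader will have no trouble supplying the details," as the authors do elsewhere.
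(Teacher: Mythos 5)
Your proposal is correct and takes the route the paper intends: the paper offers no separate proof of this theorem, deriving it from the tree construction in the proof of Theorem \ref{t:null} combined with the H\"older modification of Remark \ref{r:holder} (replace $D_j$ by $D_j^s$ in the packing test), which is exactly what you do --- your choice $\delta=b^{-1/s}$ is merely a reparametrization of the paper's $\delta=b^{-1}$. One bookkeeping caveat: the modified construction naturally gives a map with H\"older constant $1$ on a domain in $[0,S_n]^m$ with $S_n\lesssim_{m,s',C}(\diam F_n)^s$, so in the unbounded case it is cleaner not to rescale to $[0,n]^m$ (for $s>1$ that rescaling does \emph{not} keep the constant bounded in $n$); keeping the unrescaled domains, the Blaschke-selection-plus-diagonalization argument goes through verbatim, and likewise the constant in the bounded case is $H\lesssim\diam F$ rather than $(\diam F)^s$, which is immaterial to the statement.
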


Let $\overline{\dim}_M\, \XX$ denote the upper Minkowski dimension of a metric space $\XX$ (see e.g.~\cite{Mattila} or \cite{bishop-peres}), which satisfies $\dim_H \XX\leq \dim_P \XX\leq \overline{\dim}_M \XX\leq \dim_A \XX$. Balka and Keleti recently announced the following significant extension of Theorems \ref{t:null} and \ref{t:null-holder}.

\begin{theorem}[{\cite[Theorem 4.3]{balka-keleti}}] \label{t:bk} Suppose that $\MM$ and $\XX$ are compact metric spaces. If $\overline{\dim}_M\, \XX<s \dim_H \MM$ for some $s>0$, then there exist a compact set $E\subset\MM$ and a $(1/s)$-H\"older surjection $f:E\rightarrow \XX$.\end{theorem}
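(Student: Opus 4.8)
The plan is to combine a Frostman-type regularization of $\MM$ with the tree-of-sets machinery of \S\ref{s:Lipschitz}, using the ``square packing'' idea of Lemma \ref{root-packing} in reverse: instead of packing cubes inside a Euclidean cube, one packs pairwise separated compact subsets of positive measure inside a piece of $\MM$. First I would fix auxiliary exponents $v,t$ with $\overline{\dim}_M\,\XX < v < st$ and $t<\dim_H\MM$, which is possible since $\overline{\dim}_M\,\XX < s\dim_H\MM$. Using the subset theorem for analytic metric spaces (an analytic space of Hausdorff dimension exceeding $t$ has a compact subset of positive finite $t'$-dimensional Hausdorff measure for $t'\in(t,\dim_H\MM)$) together with the usual density and Egorov regularization---restrict to the large portion of such a witness set on which the upper $t'$-density is bounded on a uniform range of scales---produce a compact set $K\subseteq\MM$, a Radon probability measure $\nu$ with $\spt\nu = K$, and a constant $C_0$ with $\nu(B(x,r))\le C_0r^{t}$ for all $x\in\MM$ and $r>0$; in particular $\nu$ is atomless. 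Separately, using $\overline{\dim}_M\,\XX<v$, build a tree of sets $\mathcal{U}=\bigsqcup_l\mathcal{U}_l$ in $\XX$ whose $l$-th level is a minimal cover of $\XX$ by sets of diameter $\lesssim b^{-l}$, realized as closed balls so that each child lies in its parent; here $b>1$ is a large constant fixed at the very end. Then $\leaves(\mathcal{U})=\XX$, $\max_{Q\in\mathcal{U}_l}\diam Q\lesssim b^{-l}$, and $\#\mathcal{U}_l\lesssim b^{lv}$.

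Next I would construct recursively a tree $\mathcal{W}=\bigsqcup_l\mathcal{W}_l$ of compact subsets of $K$, together with a level-preserving, parent-preserving label map $\lambda\colon\mathcal{W}\to\mathcal{U}$, maintaining: $\lambda$ maps $\mathcal{W}_l$ onto $\mathcal{U}_l$ for all $l$; $\nu(W)>0$ for every $W\in\mathcal{W}$; the children of any node in $\mathcal{W}_l$ are pairwise $\gtrsim b^{-(l+1)s}$-separated and have diameter $\lesssim b^{-(l+1)s}$ (so distinct members of $\mathcal{W}_l$ are disjoint and diameters shrink); and, crucially, for each $\hat U\in\mathcal{U}_l$ the total mass $\sum\{\nu(W):\lambda(W)=\hat U\}$ is at least a prescribed quantity $m_l(\hat U)>0$. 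The inductive step uses $\nu$ as a packing tool: given a preimage $W$ of $\hat U$, pass to a net of $\spt(\nu|_W)$ at a scale comparable to $b^{-(l+1)s}$; the Frostman bound forces this net to have at least of order $\nu(W)\,b^{(l+1)st}$ points, and suitably shrunk balls about the net points yield that many pairwise $\gtrsim b^{-(l+1)s}$-separated compact subsets of $W$, each of positive $\nu$-mass at most of order $b^{-(l+1)st}$; one then distributes the subsets produced by all preimages of $\hat U$ among the children $\hat V$ of $\hat U$, giving each $\hat V$ enough of them to reach mass $m_{l+1}(\hat V)$. The demands are defined so that this closes: take $m_l(\hat U)$ to dominate $\sum_{\hat V\in\Child(\hat U)}m_{l+1}(\hat V)$ plus a term of size $\#\Child(\hat U)\cdot b^{-(l+1)st}$ which pays for the number of separated children and for rounding. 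Unrolling, the demand at the root is at most a constant multiple of $\sum_{l\ge0}b^{-(l+1)st}\#\mathcal{U}_{l+1}\lesssim\sum_{l\ge0}b^{(l+1)(v-st)}$, which is finite because $v<st$ and can be made smaller than $\nu(K)=1$ by taking $b$ large. This is the only point at which the hypothesis $\overline{\dim}_M\,\XX<s\dim_H\MM$ enters.

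Finally I would set $E:=\leaves(\mathcal{W})$, a compact subset of $K\subseteq\MM$. Since diameters along $\mathcal{W}$ tend to $0$ and distinct members of each level are disjoint, each $z\in E$ lies on a unique branch $(Q_l^z)_l$ of $\mathcal{W}$ (Lemma \ref{l:branch}), and the nested sets $\lambda(Q_l^z)\in\mathcal{U}_l$ have diameters tending to $0$, hence meet in exactly one point; define $f(z)$ to be that point. If $z\ne z'$ first separate at level $l$, then $|z-z'|\gtrsim b^{-ls}$ while $f(z),f(z')\in\lambda(Q_{l-1}^{z})$ forces $|f(z)-f(z')|\lesssim b^{-(l-1)}\lesssim_b (b^{-ls})^{1/s}\lesssim|z-z'|^{1/s}$; hence $f$ is $(1/s)$-H\"older (the finitely many smallest values of $l$ only affect the constant). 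For surjectivity, fix $y\in\XX=\leaves(\mathcal{U})$ and a branch $(\hat U_l^y)_l$ of $\mathcal{U}$ with $\bigcap_l\hat U_l^y=\{y\}$; the subtree consisting of those $W\in\mathcal{W}$ whose label $\lambda(W)$ is the term of $(\hat U_l^y)_l$ at the level of $W$ is rooted at $K$, finitely branching, and has nonempty levels (because $\lambda$ is onto each $\mathcal{U}_l$), so by K\"onig's lemma it contains an infinite branch $(Q_l)_l$; then every point of the nonempty compact set $\bigcap_lQ_l\subseteq E$ is mapped by $f$ into $\bigcap_l\lambda(Q_l)=\{y\}$. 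Nothing more is needed since $\MM$ is compact, and the case $\diam\XX=0$ is trivial.

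I expect the main obstacle to be the inductive construction of $\mathcal{W}$ in the middle step: ensuring, at every node and forever, that there are enough pairwise separated positive-measure children and that the mass reserved for each $\mathcal{U}$-node really can be handed down to its children. The Frostman upper bound produces \emph{many} candidate separated subsets, but their total mass is not controlled from below in a naive way---pairwise separated subsets cannot cover their parent, so a fraction of the mass is lost at each refinement---so one must choose the nets with care and build a small over-provisioning into the demands $m_l$. The inequality $v<st$, i.e.\ $\overline{\dim}_M\,\XX<s\dim_H\MM$, is precisely what keeps the accumulated demand strictly below $\nu(K)$.
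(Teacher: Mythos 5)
First, a point of comparison: the paper does not prove Theorem \ref{t:bk} at all; it is quoted from Balka and Keleti, and the surrounding text records that their argument relies on the Mendel--Naor ultrametric skeleton theorem together with the Keleti--M\'ath\'e--Zindulka theorem on H\"older surjections from ultrametric spaces onto cubes. Your proposal is therefore an attempt at a from-scratch proof. Its outer layers are fine: the Howroyd-type subset theorem plus the density/Egorov regularization to get a compact $K\subset\MM$ carrying $\nu$ with $\nu(B(x,r))\le C_0r^t$, the nested covering tree $\mathcal{U}$ of $\XX$ with $\#\mathcal{U}_l\lesssim b^{lv}$, the H\"older estimate, and the K\"onig-lemma surjectivity argument are all correct \emph{given} the middle step.

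The gap is exactly where you suspected, and the over-provisioning does not repair it. In the inductive step the Frostman bound gives a \emph{count} lower bound: a piece $W$ yields at least $c\,\nu(W)b^{(l+1)st}$ pairwise separated compact subsets, each meeting $\spt(\nu|_W)$ (hence of positive mass) and of mass \emph{at most} $Cb^{-(l+1)st}$. But your demands $m_{l+1}(\hat V)$ are \emph{mass} lower bounds, and nothing bounds the total mass of the extracted pieces from below: separation forces you to discard the corridors between them, and since $\nu$ need not be doubling, nearly all of $\nu(W)$ can sit in those corridors at that scale, so each extracted piece carries positive but quantitatively uncontrolled mass. Hence ``giving each $\hat V$ enough of them to reach mass $m_{l+1}(\hat V)$'' may be impossible no matter how many pieces you hand over, and the term $\#\Child(\hat U)\cdot b^{-(l+1)st}$ in $m_l(\hat U)$ only pays for rounding and overshoot, not for this loss. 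Switching to count-based demands fails for the dual reason: the number of grandchildren is governed by the (uncontrolled) masses of the children, so the recursion cannot be closed either way. This mass-versus-count mismatch is precisely the obstruction the known proof route is built to overcome: inside an ultrametric subset the ball decomposition at each scale is a genuine partition of the support, so separating loses no mass, and the Mendel--Naor skeleton theorem (a deep result, in which the drop from exponent $t$ to $t(1-\epsilon)$ is both necessary and delicate to achieve) supplies such a subset with a comparable Frostman exponent; the slack $\overline{\dim}_M\,\XX<st$ is then spent roughly as you spend it, but only after that reduction. Without this ingredient, or a substitute of comparable strength, the inductive construction of $\mathcal{W}$ as you describe it breaks down.
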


The proof of Theorem \ref{t:bk} relies in part on a deep theorem of Mendel and Naor  on the existence of ultrametric subsets \cite{mendel-naor} and a theorem of Keleti, M\'ath\'e, and Zindulka on the existence of H\"older surjections from ultrametric spaces onto cubes \cite{KMZ}.

\begin{example}[H\"older parameterizations vs.~H\"older fragments] Badger and Vellis \cite{BV-IFS} proved that there exist connected self-affine Bedford-McMullen carpets $F\subset\RR^2$ such that any $(1/t)$-H\"older surjection $g:[0,1]\rightarrow F$ necessarily has $t>2$. In contrast, since the upper Minkowski dimension of the carpet $F$ is less than 2, Theorem \ref{t:bk} ensures that there exists a compact set $E\subset[0,1]$ and a $(1/s)$-H\"older surjection $f:E\rightarrow F$ with $s<2$.
\end{example}

\section{Metric cubes, dimension of measures, and other prerequisites}\label{s:background}

In this section, we collect essential tools from metric geometry and geometric measure theory that we need for the proof of Theorem \ref{t:main}.

\subsection{Generalized b-adic cubes} \label{ss:cubes}

We will use the streamlined construction of metric cubes by K\"aenm\"aki, Rajala, and Suomala \cite{KRS-cubes}. The original application (see \cite[Theorem 4.1]{KRS-cubes}) was to construct doubling measures of arbitrarily small upper packing dimension.  Recall that a set $B$ in a metric space $\XX$ is \emph{totally bounded} if $B$ can be covered by finitely many balls of radius $r$ for every $r>0$. We let $B(x,r)$ and $U(x,r)$ denote the closed and open balls in $\XX$ with center $x$ and radius $r$, respectively. We emphasize that the following theorem does not require $\XX$ to be complete. Note that we have replaced the parameter $r\in(0,1/3]$ in the original statement of the theorem with $b=r^{-1}\in[3,\infty)$.

\begin{theorem}[{\cite[Theorem 2.1]{KRS-cubes}}] \label{KRS-cubes} Let $\XX$ be a metric space in which every ball is totally bounded and choose an ``origin'' $o\in \XX$ and scaling factor $b\in[3,\infty)$. For each $k\in\ZZ$, there exists a set $\Delta_k$ of nonempty bounded Borel sets (``cubes'') and a set of points $\{x_Q:Q\in\Delta_k\}$ (``centers'') with the following properties. \begin{enumerate}
\item partitioning: $\XX=\bigcup_{Q\in\Delta_k} Q$ for all $k\in\ZZ$;
\item nesting: $Q\cap R=\emptyset$ or $R\subset Q$ for all $Q\in\Delta_k$ and $R\in \Delta_m$ when $m\geq k$;
\item roundness: $U(x_Q,c_b b^{-k})\subset Q\subset B(x_Q,C_b b^{-k})$ for all $Q\in\Delta_k$, where $$c_b=\frac{1}{2} - \frac{1}{b-1}\quad\text{and}\quad C_b= \frac{b}{b-1};$$
\item origin: for every $k\in\ZZ$, there is $Q\in\Delta_k$ such that $x_Q=o$; and,
\item inheritance: for every $k\in\ZZ$ and $Q\in \Delta_k$, there exists a cube $R\in\Delta_{k+1}$ such that $R\subset Q$ and $x_R=x_Q$.
\end{enumerate}
\end{theorem}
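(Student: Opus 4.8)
The statement is the metric‑space counterpart of the dyadic cube decompositions of Christ and David, and I would reconstruct the proof in the streamlined form of K\"aenm\"aki, Rajala, and Suomala. \emph{Backbone and the outer ball.} The starting point is a nested family of separated, spanning nets indexed by $\ZZ$. Build $C_0\subseteq\XX$ a maximal $1$‑separated set with $o\in C_0$ (Zorn). For $k\geq 1$, having $C_{k-1}$, note it is $b^{-k}$‑separated (as $b\geq 1$) and extend it to a maximal $b^{-k}$‑separated set $C_k\supseteq C_{k-1}$, which is then automatically $b^{-k}$‑dense in $\XX$. For $k\leq -1$, having $C_{k+1}$, take $C_k\subseteq C_{k+1}$ maximal among $b^{-k}$‑separated subsets of $C_{k+1}$ that contain $o$; an induction on the density constants via the recursion $K_k\leq K_{k+1}/b+1$, whose fixed point is $b/(b-1)=C_b$, shows each $C_k$ is $C_b b^{-k}$‑dense in $\XX$. (This coarse‑scale step, and the resulting density constant $C_b$, is the only mildly non‑obvious point in building the nets; note that the application of the theorem in this paper only ever uses scales $k$ bounded below, which removes it.) Next fix a coherent parent map on $\bigsqcup_k C_k$: for $y\in C_k$ put $y^\uparrow:=y$ if $y\in C_{k-1}$, and otherwise choose $y^\uparrow\in C_{k-1}$ with $|y-y^\uparrow|<b^{-(k-1)}$, possible by maximality of $C_{k-1}$. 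Iterating yields ancestors $a_j(y)\in C_j$ for $j\leq k$ with $a_j(o)=o$ and $|y-a_j(y)|<\sum_{i\geq j}b^{-i}=C_b b^{-j}$; this geometric‑series bound is precisely the outer inclusion $Q\subseteq B(x_Q,C_b b^{-k})$ of property (3).

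\emph{The cubes.} Since all balls of $\XX$ are totally bounded, $\XX$ is separable and each $C_k$ is countable; fix once and for all an enumeration of $\bigsqcup_k C_k$, to be used for every tie‑break. Define the cubes as fibers of routing maps $\Phi_k\colon\XX\to C_k$. Given $z$, let $y_m(z)$ be the point of $C_m$ closest to $z$ (ties via the enumeration). The ancestor strings $\{(a_j(y_m(z)))_{j\leq m}:m\geq k\}$ all take values in the finite sets $C_j\cap B(z,2C_b b^{-j})$ (total boundedness), so K\"onig's lemma selects a single $\uparrow$‑coherent branch $(\Phi_j(z))_{j\in\ZZ}$ which at each $j$ agrees with $a_j(y_m(z))$ for infinitely many $m$; making the König choices by least index in the enumeration renders $\Phi_j$ Borel. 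Set $\Delta_k:=\{\Phi_k^{-1}(x):x\in C_k,\ \Phi_k^{-1}(x)\neq\emptyset\}$, with $x_Q:=x$ for $Q=\Phi_k^{-1}(x)$. Then partitioning (1) is immediate; nesting (2) holds because $\Phi_{k-1}={}^\uparrow\circ\Phi_k$, so each $\Phi_{k-1}$‑fiber is a union of $\Phi_k$‑fibers; the outer half of (3) follows by letting $m\to\infty$ through the stabilizing indices in the displayed estimate; and for (4) and (5), a point $x\in C_k\subseteq C_m$ is its own closest point in every $C_m$, hence $\Phi_j(x)=x$ for all $j\geq k$, so $o$ is a center at every scale and every cube's center is the center of one of its children.

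\emph{The inner ball — the crux.} What remains, and where the hypothesis $b\geq 3$ is used, is $U(x_Q,c_b b^{-k})\subseteq Q$ with $c_b=\tfrac12-\tfrac1{b-1}$. Morally the Voronoi cell of a center $x$ in $C_k$ already contains $U(x,\tfrac12 b^{-k})$ by $b^{-k}$‑separation; since the Voronoi cells at distinct scales do not nest, producing a nested family forces one to cede the margin from $\tfrac12$ down to $c_b$. The plan is to choose the parent map — equivalently the partial order on $\bigsqcup_k C_k$ — so that on the deep interior the routing is \emph{pinned}: if $|z-x|<c_b b^{-k}$ then $y_m(z)$ lies within $\tfrac12 b^{-k}$ of $x$, so $x$ is the unique closest $C_k$ point to $y_m(z)$, and one then needs the parent chain above $y_m(z)$ to land on exactly that point. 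Making the parent structure ``Voronoi‑adapted'' enough for this to hold simultaneously at every center and every scale, while keeping the routing single‑valued and Borel, is the genuine technical content, and it is exactly here that $b\geq 3$ is used, to keep the accumulated parent‑chain displacement $C_b b^{-j}=\tfrac{b}{b-1}b^{-j}$ from swamping the separation $b^{-j}$. I expect this verification to be the main obstacle; everything else is bookkeeping with geometric series.
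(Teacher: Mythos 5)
A preliminary remark: the paper does not prove Theorem \ref{KRS-cubes} at all; it is imported verbatim from K\"aenm\"aki--Rajala--Suomala \cite[Theorem 2.1]{KRS-cubes}, so your proposal can only be measured against that construction, not against anything internal to this paper. Your nested nets, parent map, routing maps $\Phi_k$, and the K\"onig/diagonal selection do deliver partitioning, nesting, the origin and inheritance properties, and the outer half of roundness (modulo the Borel measurability of the fibers, which you assert but do not check; that part is routine). This is indeed the same general architecture as the cited construction.

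The genuine gap is exactly where you stop. The inner inclusion $U(x_Q,c_bb^{-k})\subset Q$ is the entire content of the theorem beyond a generic net-and-parent construction, and you leave it as a ``plan.'' Worse, with the parent map as you actually define it --- ``choose $y^\uparrow\in C_{k-1}$ with $|y-y^\uparrow|<b^{-(k-1)}$,'' i.e.\ an arbitrary admissible choice --- the inner inclusion is simply false in general: a point $y\in C_{k+1}$ lying within $0.4\,b^{-k}$ of $x\in C_k$ may also lie within $b^{-k}$ of a different $x'\in C_k$, and nothing in your construction prevents the chain through $y$ (and with it points of $U(x,c_bb^{-k})$) from being routed to $x'$. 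The repair is not a delicate ``Voronoi-adapted'' global choice but the standard pinning step: take $y^\uparrow$ to be a \emph{nearest} point of $C_j$ to $y\in C_{j+1}$; by $b^{-j}$-separation, whenever $|y-x|<\tfrac12 b^{-j}$ for some $x\in C_j$ this nearest point is unique and equals $x$. Then the inner ball follows from your own geometric series: if $|z-x|<c_bb^{-k}$, pick $m$ so large that $\dist(z,C_m)$ is less than the slack $c_bb^{-k}-|z-x|$; the level-$(k+1)$ ancestor of $y_m(z)$ has drifted from $y_m(z)$ by less than $\sum_{j\geq k+1}b^{-j}=\tfrac{1}{b-1}b^{-k}$, hence lies within $c_bb^{-k}+\tfrac{1}{b-1}b^{-k}=\tfrac12 b^{-k}$ of $x$, so its parent is $x$; thus $a_k(y_m(z))=x$ for all large $m$, forcing $\Phi_k(z)=x$, i.e.\ $U(x,c_bb^{-k})\subset\Phi_k^{-1}(x)$. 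This also corrects your accounting of where $b\geq 3$ enters: it is needed so that $c_b=\tfrac12-\tfrac1{b-1}\geq 0$, i.e.\ so that the drift accumulated strictly above level $k+1$ plus the inner radius stays below half the separation $b^{-k}$; it is not about comparing $C_bb^{-j}$ with $b^{-j}$, and once the parent is pinned as above there is no further obstruction to carrying this out ``simultaneously at every center and every scale.''
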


\begin{remark}[associated notions] \label{r:KRS} We call any family $(\Delta_k)_{k\in\ZZ}$  given by Theorem \ref{KRS-cubes} a \emph{system of $b$-adic (KRS) cubes} for $\XX$ with \emph{origin} $o$. We call $\Delta_k$ the \emph{$k$-th level} or \emph{generation} of $\Delta:=\bigsqcup_{k\in\ZZ}\Delta_k$. The same point set $Q$ in $\XX$ may belong to $\Delta_k$ for several $k$. For each $Q\in\Delta_k$, we call the number $\side Q:=b^{-k}$ the \emph{side length} of $Q$, we call $U_Q:=U(x_Q,c_b b^{-k})$ the \emph{inner ball} for $Q$, and we call $B_Q:= B(x_Q, C_bb^{-k})$ the \emph{outer ball} for $Q$. We always have $\diam Q\leq \diam B_Q\leq 2C_b \side Q.$ However, in general, there is no lower bound on $\diam Q$ in terms of $\side Q$, because $\XX$ could be bounded and/or disconnected.

For each $Q\in\Delta_k$, let $\Child(Q)=\{R\in\Delta_{k+1}:R\subset Q\}$ denote the \emph{children} of $Q$ in $\Delta$; we call $R\in\Child(Q)$ a \emph{child} of $Q$ and we call $Q$ the \emph{parent} of $R$. More generally, for any $Q\in\Delta_k$ and $j\geq 0$, let $\Child^j(Q)=\{R\in\Delta_{k+j}:R\subset Q\}$ denote the set of \emph{$j$-th generation descendants} of $Q$. (Note that $\Child^0(Q)=\{Q\}$.)

For every cube $Q\in\Delta$, we let $Q^\uparrow$ denote the parent of $Q$ and let $Q^\downarrow$ denote the unique cube $R\in \Child(Q)$ such that $x_R=x_Q$. We call $Q^\downarrow$ the \emph{central child} of $Q$. In order to write down later estimates, we define the \emph{central indicator function} $\mathsf{ci}:\Delta\rightarrow \{0,1\}$ so that $\mathsf{ci}(Q)=1$ if $Q$ is the central child of its parent (i.e.~$Q=Q^{\uparrow\downarrow}$) and $\mathsf{ci}(Q)=0$ otherwise. Finally, for every cube $Q_0\in \Delta$ and chain of descendants $Q_1\in\Child(Q_0)$, \dots, $Q_j\in\Child(Q_{j-1})$ with $j\geq 1$, we define the \emph{central counting function} \begin{equation}\label{cc-def}\mathsf{cc}(Q_0,Q_j):=\#\{1\leq i\leq j: Q_{i}=Q_{i-1}^{\downarrow}\}=\#\{1\leq i\leq j:\mathsf{ci}(Q_i)=1\}.\end{equation}\end{remark}

\begin{remark}[exhaustion] \label{r:exhaustive} Inclusion of the origin in a system of $b$-adic cubes has several consequences. For each $k\in\ZZ$, let $Q^o_k$ denote the unique cube in $\Delta_k$ whose center is the origin. For any $k_0\in\ZZ$, we have $Q^o_{k_0}\subset Q^o_{k_0-1}\subset Q^o_{k_0-2}\subset\cdots$ and $\bigcup_{j=0}^\infty Q^o_{k_0-j}=\XX$, because $Q^o_{k_0-j}\supset U(o,b^{j-k_0})$. Therefore, it is possible to exhaust $\XX$ by cubes in $\Delta$. In particular, every cube $Q\in \Delta$ belongs to $Q^o_k$ for some $k\in\ZZ$. Consequently, every pair of cubes in $\Delta$ have a common ancestor. \end{remark}

\begin{remark}[the central child is relatively far away from the boundary] \label{r:central} Recall that for any nonempty sets $A$ and $B$ in $\XX$, $\gap(A,B)=\inf_{a\in A}\inf_{b\in B}|a-b|$. A key property that we will use without further comment is monotonicity: $\gap(A,B)\geq \gap(C,D)$ whenever $A\subset C$ and $B\subset D$. If the scaling factor $b>5$ and $Q\in\Delta_k$ is any cube such that $Q\neq \XX$, then \begin{equation}\begin{split} \label{b-gap} \gap(Q^{\downarrow},\XX\setminus Q)&\geq \gap(B_{Q^\downarrow},\XX\setminus U_Q)\\
&\geq \radius U_{Q}-\radius B_{Q^\downarrow} =(bc_b-C_b)b^{-(k+1)}\gtrsim_b \side Q^\downarrow.\end{split}\end{equation} Indeed, $bc_b-C_b=b(\frac{1}{2}-\frac{2}{b-1})>0$ if and only if $b>5$.
\end{remark}

\begin{example} A modified version of the usual triadic cubes in $\RR^d$ enjoys all of the essential properties of KRS cubes. Let $o=(0,\dots,0)$ denote the origin. We initially declare that each cube of the form $Q_k^o=(-\frac{1}{2}\cdot 3^k,\frac12\cdot 3^k]^d$ with $k\in\ZZ$ is an \emph{origin-based triadic cube}. Further, we declare that any half-open cube that appears after trisecting an origin-based triadic cube into $3^d$ equal size subcubes is also an origin-based triadic cube. Let $\Delta_k$ denote all origin-based triadic cubes of side length $3^{-k}$. The center $x_Q$ of $Q\in\Delta=\bigcup_{k\in\ZZ}\Delta_k$ is the geometric center of the cube. The family $(\Delta_k)_{k\in\ZZ}$ of origin-based triadic cubes satisfy properties (1)--(5) of Theorem \ref{KRS-cubes} with scaling factor $b=3$ and with constants $c_b$ and $C_b$ replaced by $1/2$ and $\sqrt{n}/2$, respectively. Of course, $\gap(Q^\downarrow, \RR^d\setminus Q)\geq \side Q^\downarrow$ for all $Q$. One way in which origin-based triadic cubes are superior to the usual variant is that each nonempty bounded set in $\RR^d$ is contained in an origin-based triadic cube. Another improvement is that any two origin-based triadic cubes have a common ancestor.
\end{example}

\begin{lemma}[counting cubes I]\label{l:AR-count}
Let $\XX$ be an Ahlfors $q$-regular metric space. If $(\Delta_k)_{k\in\ZZ}$ is a system of $b$-adic cubes for $\XX$ with $b\geq 5$, then \begin{equation}\label{e:AR-count-X} b^{jq}\lesssim_\XX \#\Child^j(Q) \lesssim_\XX b^{jq}\quad\text{for all $Q\in\Delta$ with $Q\neq\XX$ and $j\geq 1$.}\end{equation}
\end{lemma}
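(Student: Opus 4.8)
The plan is to exploit the roundness property (3) of the $b$-adic cubes together with Ahlfors $q$-regularity and a packing argument. Fix $Q\in\Delta_k$ with $Q\neq\XX$ and $j\geq 1$, and write the children in $\Child^j(Q)$ as cubes of side length $b^{-(k+j)}$, so each $R\in\Child^j(Q)$ satisfies $U(x_R,c_b b^{-(k+j)})\subset R\subset B(x_R,C_b b^{-(k+j)})$. For the \emph{lower bound}, I would observe that the inner balls $U_R$ are pairwise disjoint (by nesting, since distinct cubes in $\Delta_{k+j}$ are disjoint), they are all contained in $B_Q=B(x_Q,C_b b^{-k})$, and each has $\Haus^q$-measure at least $C^{-1}(c_b b^{-(k+j)})^q$ by Ahlfors regularity; meanwhile $\Haus^q(B_Q)\leq C(C_b b^{-k})^q$. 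Dividing gives $\#\Child^j(Q)\lesssim_\XX b^{jq}$ for the upper bound — wait, let me reorganize: that disjointness/volume comparison yields the \emph{upper} bound $\#\Child^j(Q)\leq C^2 (C_b/c_b)^q b^{jq}$.

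For the \emph{lower bound}, I would use partitioning (property (1)): the cubes of $\Delta_{k+j}$ cover $\XX$, hence the subfamily $\Child^j(Q)=\{R\in\Delta_{k+j}:R\subset Q\}$ covers $Q$ (every $R\in\Delta_{k+j}$ meeting $Q$ lies inside $Q$ by nesting since $k+j\geq k$). Each such $R$ has $\diam R\leq 2C_b b^{-(k+j)}$, so $\Haus^q(R)\leq \Haus^q(B(x_R, C_b b^{-(k+j)}))\leq C(C_b b^{-(k+j)})^q$. Therefore
\begin{equation*}
\Haus^q(Q)\leq \sum_{R\in\Child^j(Q)}\Haus^q(R)\leq \#\Child^j(Q)\cdot C(C_b b^{-(k+j)})^q.
\end{equation*}
It remains to bound $\Haus^q(Q)$ from below. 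Here I would use the inner ball: $U_Q=U(x_Q,c_b b^{-k})\subset Q$, so $\Haus^q(Q)\geq \Haus^q(U(x_Q,c_b b^{-k}))\geq C^{-1}(c_b b^{-k})^q$, provided $c_b b^{-k}<\diam\XX$ — and this is where $Q\neq\XX$ enters, since if $Q\neq\XX$ then $\side Q=b^{-k}\leq C_b\side\XX$ roughly, ensuring the radius is admissible for the regularity bound (one should argue via Remark \ref{r:exhaustive} that $Q$ is a proper subcube, so $b^{-k}\lesssim \operatorname{diam}\XX$). Combining the two displays gives $\#\Child^j(Q)\geq C^{-2}(c_b/C_b)^q b^{jq}$.

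The main obstacle I anticipate is not the counting itself but the bookkeeping around the hypothesis $Q\neq\XX$ and the requirement in Ahlfors regularity that radii stay below $\diam\XX$: one needs that $c_b b^{-k}$ (the inner radius of $Q$) is $<\diam\XX$ and that $C_b b^{-(k+j)}$ is as well, so that both the upper and lower regularity estimates apply to the relevant balls. For the inner ball of $Q$ this follows because $Q\neq\XX$ forces $Q$ to be a proper subcube of some $Q^o_{k'}$ and hence $b^{-k}\lesssim_b\diam\XX$ (using that $\XX$ itself appears as a cube at some level, or arguing directly from roundness that a cube whose inner ball exceeds $\diam\XX$ must equal $\XX$); for the smaller balls at level $k+j$ this is automatic since their radius is even smaller. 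Once the admissibility of radii is handled, the argument is the volume-comparison sketched above, and all implied constants depend only on the Ahlfors regularity constant $C$ and on $b$ through $c_b,C_b$ — i.e., only on $\XX$.
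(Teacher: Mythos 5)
Your proposal is correct and follows essentially the same argument as the paper: the upper bound comes from packing the pairwise disjoint inner balls $U_R$, $R\in\Child^j(Q)$, into $B_Q$, the lower bound from covering $U_Q\subset Q$ by the outer balls $B_R$, with $Q\neq\XX$ used exactly as you say to make the lower regularity estimate admissible at scale $\side Q$ (the paper notes, as you essentially do, that the upper regularity bound is valid for all radii by monotonicity). The only cosmetic difference is that the paper runs the comparison with an arbitrary Ahlfors $q$-regular measure $\nu$ and then takes the infimum over such $\nu$, whereas you work directly with $\Haus^q$; both yield constants depending only on $\XX$ and $b$.
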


\begin{proof} Since $\XX$ is $q$-regular, there exists a measure $\nu$ and constants $C,D>0$ such that $Cr^q\leq \nu(U(x,r))\leq \nu(B(x,r))\leq CD r^q$ for all $x\in\XX$ and for all $0<r<\diam X$. Let $Q\in\Delta_k$ with $Q\neq \XX$. Then we may use the lower bound on $\nu(U_Q)$ and on $\nu(U_R)$ for any descendant $R$ of $Q$. The upper bounds on $\nu(B_Q)$ and $\nu(B_R)$ are always valid. Let $j\geq 1$. Using $U_Q\subset Q\subset \bigcup_{R\in\Child^j(Q)}B_R$ and $\bigcup_{R\in\Child^j(Q)}U_R\subset Q\subset B_Q$, as well as the pairwise disjointness of $\{U_R:R\in\Child^j(Q)\}$, we have $$Cc_b^qb^{-kq} \leq \#\Child^j(Q)CDC_b^qb^{-(k+j)q}\text{\ \ and\ \ }\#\Child^j(Q)Cc_b^qb^{-(k+j)q}\leq CDC_b^qb^{-kq}.$$ Rearranging yields $[D(C_b/c_b)^q]^{-1} b^{jq}\leq \#\Child^j(Q) \leq D(C_b/c_b)^q b^{jq}$. Noting that $C_b/c_b=2b/(b-3)$ yields \begin{equation}\label{e:AR-count} \left[(\tfrac{2b}{b-3})^q D\right]^{-1} b^{jq}\leq \#\Child^j(Q) \leq \left[(\tfrac{2b}{b-3})^q D\right] b^{jq}.\end{equation} When $b\geq 5$, we have $C_b/c_b\leq C_5/c_5=5$. Thus, letting $\nu$ range over all possible Ahlfors $q$-regular measures on $\XX$, it follows that we may replace $D(C_b/c_b)^q$ in \eqref{e:AR-count} with a constant depending on $\XX$ (including $q$), but not on a choice of $\nu$ nor on the choice of a particular system of cubes.
\end{proof}

To construct doubling measures using $b$-adic cubes (see \S\ref{s:bernoulli}), it will be convenient to have the following variant of Lemma \ref{l:AR-count}.

\begin{lemma}[counting cubes II]\label{l:2AR-count} Let $\XX$ be an Ahlfors $q$-regular metric space and let  $(\Delta_k)_{k\in\ZZ}$ be a system of $b$-adic cubes for $\XX$ with $b\geq 47$. For all $Q\in\Delta_k$, define \begin{align}
\Inner(Q)&:=\{R\in\Child(Q):R\cap U(x_Q,\tfrac12c_b b^{-k})\neq\emptyset\} \label{inner-def},\\
\Outer(Q)&:=\Child(Q)\setminus\Inner(Q)=\{R\in\Child(Q):R\cap U(x_Q,\tfrac12c_b b^{-k})=\emptyset\} \label{outer-def}.\end{align} For all $Q\in\Delta$ with $Q\neq\XX$, we have \begin{equation}\label{in-out-count} b^q\lesssim_\XX\#\Inner(Q)\lesssim_\XX b^q\quad\text{and}\quad \#\Outer(Q)\lesssim_\XX b^q.\end{equation} If $Q\in\Delta$, $Q\neq\XX$, $R\in\Child(Q)$, and $\gap(R,\XX\setminus Q)\leq 9\side R$, then $R\in\Outer(Q)$.
\end{lemma}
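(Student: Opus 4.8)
\emph{Proof plan.} The plan is to fix an Ahlfors $q$-regular measure $\nu$ on $\XX$ (as in the proof of Lemma~\ref{l:AR-count}, so that $\nu(U(x,\rho))\gtrsim_\XX\rho^q$ and $\nu(B(x,\rho))\lesssim_\XX\rho^q$ for $0<\rho<\diam\XX$) and to convert each of the four assertions into a volume comparison, using the roundness of the KRS cubes. Fix $Q\in\Delta_k$ with $Q\neq\XX$ as a point set and write $r:=\side Q=b^{-k}$; since $U(x_Q,c_b r)\subset Q\subsetneq\XX$ forces $c_b r\leq\diam\XX$, every ball below of radius a fixed fraction of $c_b r$ lies in the range where the Ahlfors estimates apply. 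Two elementary facts will be used throughout: any child of $Q$ that meets $U(x_Q,\tfrac12 c_b r)$ belongs to $\Inner(Q)$ by definition; and since $\diam R\leq 2C_b b^{-1}r$ for every $R\in\Child(Q)$, any $R\in\Inner(Q)$ satisfies $R\subset B(x_Q,(\tfrac12 c_b+2C_b b^{-1})r)$, so conversely any child of $Q$ containing a point at distance greater than $(\tfrac12 c_b+2C_b b^{-1})r$ from $x_Q$ belongs to $\Outer(Q)$. The upper bounds are then immediate: $\#\Inner(Q),\#\Outer(Q)\leq\#\Child(Q)\lesssim_\XX b^q$ by Lemma~\ref{l:AR-count} with $j=1$.

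For the lower bound on $\#\Inner(Q)$, partition $Q$ by its children. Since $U(x_Q,\tfrac12 c_b r)\subset Q=\bigcup_{R\in\Child(Q)}R$ and every child meeting this ball lies in $\Inner(Q)$, we get $U(x_Q,\tfrac12 c_b r)\subset\bigcup_{R\in\Inner(Q)}B_R$, whence
\[
c_{\XX}\,r^q\leq\nu\bigl(U(x_Q,\tfrac12 c_b r)\bigr)\leq\sum_{R\in\Inner(Q)}\nu(B_R)\leq C_{\XX}\,\#\Inner(Q)\,(b^{-1}r)^q ,
\]
which rearranges to $\#\Inner(Q)\gtrsim_\XX b^q$.

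The lower bound on $\#\Outer(Q)$ is the crux and, notably, cannot be obtained by subtracting $\#\Inner(Q)$ from $\#\Child(Q)$, since both are controlled only up to multiplicative constants depending on $\XX$. Instead I will show that the outer part $\bigcup_{R\in\Outer(Q)}R=Q\setminus\bigcup_{R\in\Inner(Q)}R$ carries $\nu$-mass $\gtrsim_\XX r^q$; the counting bound then follows exactly as above (summing $\nu(B_R)$ over $\Outer(Q)$ gives $r^q\lesssim_\XX\#\Outer(Q)(b^{-1}r)^q$). Because all inner children lie inside $B':=B(x_Q,(\tfrac12 c_b+2C_b b^{-1})r)$, we have $Q\setminus\bigcup_{R\in\Inner(Q)}R\supset U(x_Q,c_b r)\setminus B'$, a spherical shell contained in $Q$; for $b\geq 47$ its inner radius $(\tfrac12 c_b+2C_b b^{-1})r$ and outer radius $c_b r$ are separated by a definite factor (indeed $(\tfrac12 c_b+2C_b b^{-1})/c_b\leq\tfrac{13}{22}$). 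The remaining task is to see that this shell has $\nu$-measure $\gtrsim_\XX r^q$; I expect to deduce this by playing the Ahlfors lower bound for $\nu(U(x_Q,c_b r))$ against the Ahlfors upper bound for $\nu(B')$, together with the separation of the two radii forced by $b\geq 47$. This is the step I anticipate being the main obstacle, since one must arrange that the resulting constant depends only on $\XX$ rather than appear as a potentially vacuous difference of ball volumes.

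Finally, for the $\gap$-criterion I argue by contraposition. Suppose $R\in\Inner(Q)$ and pick $z\in R\cap U(x_Q,\tfrac12 c_b r)$. Using $U(x_Q,c_b r)\subset Q$ (so $|x_Q-y|\geq c_b r$ for every $y\in\XX\setminus Q$) together with $|x_Q-w|\leq|x_Q-z|+\diam R\leq(\tfrac12 c_b+2C_b b^{-1})r$ for every $w\in R$, we obtain $|w-y|\geq(\tfrac12 c_b-2C_b b^{-1})r$ for all $w\in R$ and $y\in\XX\setminus Q$; hence $\gap(R,\XX\setminus Q)\geq(\tfrac12 c_b-2C_b b^{-1})r=(\tfrac12 c_b-2C_b b^{-1})b\cdot\side R$. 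Substituting $c_b=\tfrac12-\tfrac1{b-1}$ and $C_b=\tfrac b{b-1}$ and simplifying, the inequality $(\tfrac12 c_b-2C_b b^{-1})b>9$ is equivalent to $b^2-47b+36>0$, which holds for every $b\geq 47$ (the larger root being $\approx 46.22$). Therefore $R\in\Inner(Q)$ forces $\gap(R,\XX\setminus Q)>9\side R$, so the hypothesis $\gap(R,\XX\setminus Q)\leq 9\side R$ forces $R\in\Outer(Q)$, completing the proof.
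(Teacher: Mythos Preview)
Your proposal follows the same route as the paper's proof: upper bounds come from Lemma~\ref{l:AR-count}; the $\Inner$ lower bound comes from $\tfrac12 U_Q\subset\bigcup_{R\in\Inner(Q)}B_R$; the $\Outer$ lower bound is attempted via a concentric shell contained in the outer region; and the gap criterion is handled by a direct triangle-inequality computation. Your contrapositive for the last part, with the explicit check that $b^2-47b+36>0$ for $b\ge 47$, is correct and is equivalent to the paper's numerical estimate $11.2-2.1-9>0$.

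The step you flag as ``the main obstacle'' is a genuine gap, and the method you sketch --- playing the Ahlfors lower bound on $\nu(U(x_Q,c_br))$ against the Ahlfors upper bound on $\nu(B')$ --- does \emph{not} close it. In the notation of the proof of Lemma~\ref{l:AR-count} this yields
\[
\nu\bigl(U_Q\setminus B'\bigr)\ \ge\ C(c_br)^q - CD(\lambda c_br)^q\ =\ C(c_br)^q\bigl(1-D\lambda^q\bigr),\qquad \lambda\le\tfrac{13}{22},
\]
which is positive only when $D<\lambda^{-q}$; since $D$ is an unrestricted feature of $\XX$, the right-hand side can be non-positive. Concretely, in an Ahlfors regular space the annulus $U(x,\rho)\setminus B(x,\tfrac23\rho)$ can be \emph{empty} (take the middle-thirds Cantor set, $x=0$, $\rho=\tfrac12\cdot 3^{-n}$), so no purely subtractive estimate on concentric balls can succeed here. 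The paper's own proof is just as terse at this point: after observing $\diam R<\tfrac16\,\radius U_Q$, so that $\bigcup_{R\in\Inner(Q)}R\subset B(x_Q,\tfrac23 c_br)$, it simply records $C(\tfrac13 c_b)^qb^{-kq}\le\#\Outer(Q)\cdot CDC_b^qb^{-(k+1)q}$ without explaining why the shell carries $\nu$-mass $\gtrsim_\XX r^q$ (and ``$\tfrac23 B_Q$'' there is evidently a slip for $\tfrac23 U_Q$). To make either argument rigorous you should avoid the difference of concentric balls and instead exhibit a small ball \emph{inside} $Q\setminus B'$: one uses that Ahlfors $q$-regularity forces $\XX$ to be uniformly perfect together with the hypothesis $Q\neq\XX$ to locate a point $y\in Q$ with $U(y,\epsilon r)\subset Q\setminus B'$ for some $\epsilon=\epsilon(\XX)>0$, after which $\nu(Q\setminus B')\ge C(\epsilon r)^q$ delivers the $b$-independent lower bound on $\#\Outer(Q)$. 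That extra step is what is missing from both your outline and the paper's write-up.
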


\begin{proof} Let $Q\in\Delta_k$ with $Q\neq \XX$. We start with the final claim. For all $R\in\Child(Q)$, $$\diam R\leq \frac{2b}{b-1}\side R<2.1\side R\quad\text{and}\quad\radius U_Q=\frac{b-3}{2b-2} b\side R> 22.4\side R,$$ since $b\geq 47$. Also, $\gap(\frac{1}{2}U_Q,\XX\setminus Q)\geq \gap (\frac{1}{2} U_Q,\XX\setminus U_Q)\geq \frac{1}{2}\radius U_Q$. Hence \begin{equation*}\begin{split}\gap(\tfrac{1}{2}U_Q,R)&\geq \gap(\tfrac{1}{2}U_Q,\XX\setminus Q)-\diam R-\gap(R,\XX\setminus Q)\\ &> 11.2\side R-2.1\side R - \gap(R,\XX\setminus Q)>0\end{split}\end{equation*} and $R\in\Outer(Q)$ if $\gap(R,\XX\setminus Q)\leq 9 \side R$.

Let $\nu$, $C$, and $D$ be as in the proof of Lemma \ref{l:AR-count}. By definition of $\Inner(Q)$, we have $\frac{1}{2} U_Q\subset \bigcup_{R\in\Inner(Q)} B_R$. Hence using the Ahlfors regularity inequalities for $\nu$, $$C(\tfrac{1}{2}c_b)^qb^{-kq} \leq \#\Inner(Q)CDC_b^qb^{-(k+1)q}.$$ Since $2C_b/c_b=4b/(b-3)< 4.3$ when $b\geq 47$, \begin{equation} \#\Inner(Q) \geq (4.3D)^{-1}b^q.\end{equation} Letting $\nu$ range over all Ahlfors $q$-regular measures on $\XX$ yields the lower bound on $\#\Inner(Q)$ in \eqref{in-out-count}. Lemma \ref{l:AR-count} gives the upper bounds.
\end{proof}

\begin{remark} The constants $47$ and $9$ in the expressions $b\geq 47$ and $\gap(R,\XX\setminus Q)\leq 9\side R$ are a convenient choice for the proof of Lemma \ref{qb-doubling}. In general, there is no analogue of the lower bound on $\#\Inner(Q)$ in \eqref{in-out-count} for $\#\Outer(Q)$ and it is possible that $\Outer(Q)=\emptyset$. \end{remark}

\begin{corollary} \label{c:good-count} Let $\XX$ be an Ahlfors $q$-regular metric space with $\diam\XX\geq 2.1$ and let $(\Delta_k)_{k\in\ZZ}$ be a system of $b$-adic cubes for $\XX$ with $b\geq 47$. If $0<s<q$ and $b$ is sufficiently large depending only on $\XX$ (including $q$) and $s$, then \begin{equation}\label{good-count} \#\Inner(Q)\geq b^s \quad\text{and}\quad \#\Child(Q)\leq b^{q+1}\end{equation} for all $Q\in\Delta_+=\bigcup_{k=0}^\infty \Delta_k$.
\end{corollary}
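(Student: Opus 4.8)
The plan is to deduce everything directly from the two counting lemmas, Lemma~\ref{l:AR-count} and Lemma~\ref{l:2AR-count}, together with a single elementary observation. Both of those lemmas apply to any cube $Q\in\Delta$ subject to the proviso $Q\neq\XX$, so the first thing I would check is that this proviso is automatic for every $Q\in\Delta_+$. If $Q\in\Delta_k$ with $k\geq 0$, then $\side Q=b^{-k}\leq 1$, hence by roundness $\diam Q\leq\diam B_Q\leq 2C_b b^{-k}\leq 2C_b=\tfrac{2b}{b-1}$. Since the quantity $\tfrac{2b}{b-1}$ is decreasing in $b$ and $b\geq 47$, we get $\diam Q\leq\tfrac{94}{46}<2.1\leq\diam\XX$, so $Q$ cannot be all of $\XX$. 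This is the one place the hypothesis $\diam\XX\geq 2.1$ enters, and it is also the reason $\Delta_+$ (rather than all of $\Delta$) appears: the large, negative-generation cubes are exactly the ones that could coincide with $\XX$.

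With $Q\neq\XX$ secured for every $Q\in\Delta_+$, Lemma~\ref{l:2AR-count} supplies a constant $c_\XX>0$, depending only on $\XX$ (including $q$), with $\#\Inner(Q)\geq c_\XX b^q$ and $\#\Outer(Q)\geq c_\XX b^q$; likewise Lemma~\ref{l:AR-count} with $j=1$ (equivalently, the upper bound in Lemma~\ref{l:2AR-count}) gives a constant $C_\XX<\infty$, depending only on $\XX$, with $\#\Child(Q)\leq C_\XX b^q$. All three estimates then hold uniformly over $Q\in\Delta_+$.

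It remains to absorb the constants into powers of $b$. Because $s<q$, the exponent $q-s$ is positive, so $b^{q-s}\to\infty$ as $b\to\infty$; thus once $b\geq c_\XX^{-1/(q-s)}$ we have $c_\XX b^q\geq b^s$, which gives $\#\Inner(Q)\geq b^s$ and $\#\Outer(Q)\geq b^s$. Once $b\geq C_\XX$ we have $C_\XX b^q\leq b^{q+1}$, which gives $\#\Child(Q)\leq b^{q+1}$. Both thresholds depend only on $\XX$ (through $c_\XX$, $C_\XX$, and $q$) and on $s$, which is precisely the claimed dependence, so taking $b$ larger than both thresholds (and, as always, $b\geq 47$) finishes the argument.

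I do not expect any genuine obstacle: the substance is carried entirely by Lemmas~\ref{l:AR-count} and~\ref{l:2AR-count}, and what is left is bookkeeping with exponents. The only step deserving a moment's care is the reduction to the case $Q\neq\XX$ — i.e.\ verifying that every cube of nonnegative generation is honestly smaller in diameter than $\XX$ — and that step is immediate from roundness together with $b\geq 47$ and $\diam\XX\geq 2.1$.
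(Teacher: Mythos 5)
Your proposal is correct and follows essentially the same route as the paper: verify $Q\neq\XX$ for all $Q\in\Delta_+$ via roundness, $b\geq 47$, and $\diam\XX\geq 2.1$, invoke Lemmas~\ref{l:AR-count} and~\ref{l:2AR-count} for the bounds $\#\Inner(Q),\#\Outer(Q)\gtrsim_\XX b^q$ and $\#\Child(Q)\lesssim_\XX b^q$, then absorb the constants into $b^{q-s}$ and an extra factor of $b$ for $b$ large. Your explicit diameter bound $\diam Q\leq 2C_b=\tfrac{2b}{b-1}<2.1$ is in fact the cleaner version of the paper's own (slightly garbled) inequality, so nothing is missing.
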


\begin{proof} For any $Q\in\Delta_+$, we have $\diam Q\leq 2C_b/c_b<2.1\leq \diam\XX$ and $Q\neq\XX$, since $b\geq 47$. By Lemma \ref{l:AR-count} and \ref{l:2AR-count}, we can find $C_\XX>1$ such that $\#\Inner(Q)\geq C_\XX^{-1} b^q$ and $\#\Child(Q)\leq C_\XX b^q$. Then, if $b$ is sufficiently large, we have $C_\XX^{-1}\geq b^{s-q}$ and $C_\XX\leq b$.\end{proof}

\subsection{Dimension of measures}\label{ss:dimension} Let $\XX$ be a metric space. To set conventions, we define the \emph{$s$-dimensional Hausdorff measure} $\Haus^s(E)$ and \emph{$s$-dimensional packing measure} $\Pack^s(E)$ for all $s\in[0,\infty)$ and for all nonempty $E\subset\XX$ by \begin{equation}\Haus^s(E):=\lim_{\delta\downarrow 0} \inf\left\{\textstyle\sum_{i=1}^\infty (\diam E_i)^s : E\subset\textstyle\bigcup_{i=1}^\infty E_i,\,\forall_{i\geq 1}\diam E_i\leq \delta\right\}\end{equation} and \begin{equation} \begin{split} \Pack^s(E)&:=\inf\left\{\textstyle\sum_{i=1}^\infty P^s(E_i):E\subset\textstyle\bigcup_{i=1}^\infty E_i\right\},\text{ where}\\
P^s(E)&:=\lim_{\delta\downarrow 0}\sup\left\{\textstyle\sum_{i=1}^\infty (2r_i)^s: \forall_{i\geq 1} x_i\in E,  r_i\in(0,\delta/2],\,\right. \\
&\qquad\qquad\qquad\qquad\qquad\  \left. \forall_{i\neq j} B(x_i,r_i)\cap B(x_j,r_j)=\emptyset\,\right\}.\end{split}\end{equation} Then $\dim_H E:=\inf\{s\geq 0:\Haus^s(E)=0\}$ and $\dim_P E:=\inf\{s\geq 0:\Pack^s(E)=0\}$, using the convention that $\inf\emptyset=\infty$. Because a general metric space is not necessarily uniformly perfect\footnote{A metric space $\XX$ is $c$-uniformly perfect if $\diam B(x,r)\geq cr$ for all $x\in \XX$ and $r>0$ with $B(x,r)\neq\XX$. For example, connected metric spaces are $1$-uniformly perfect.}, it is important to adopt the ``radial'' definition of the packing measure (see \cite{Cutler-density-theorem}) and we have done so.

The \emph{lower} and \emph{upper Hausdorff dimensions} of a Borel measure $\mu$ on a separable metric space $\XX$ are given respectively by \begin{align} \underline{\dim}_H\, \mu &:=\inf\{\dim_H E:\mu(E)>0,\;E\text{ Borel}\},\\ \overline{\dim}_H\, \mu &:=\inf\{\dim_H E:\mu(\XX\setminus E)=0,\;E\text{ Borel}\}.\end{align} The \emph{support} of $\mu$, denoted by $\spt\mu$, is the smallest closed set $F$ such that $\mu(\XX\setminus F)=0$. Naturally, $\underline{\dim}_H\,\mu\leq \overline{\dim}_H\,\mu\leq \dim_H\spt\mu$ for all $\mu$, but both inequalities can be strict. That is to say, the dimension of a measure is a distinct notion from the dimension of its support. The \emph{lower} and \emph{upper packing dimensions} of $\mu$ are defined analogously by substituting the packing dimension of $E$ for the Hausdorff dimension of $E$. In general, \begin{equation}\underline{\dim}_H\,\mu\leq \underline{\dim}_P\,\mu;\quad \overline{\dim}_H\,\mu\leq \overline{\dim}_P\,\mu;\quad\text{and}\quad \underline{\dim}_P\,\mu\leq \overline{\dim}_P\,\mu.\end{equation} In the rare situation that $\underline{\dim}_H\,\mu=\overline{\dim}_H\,\mu=\underline{\dim}_P\,\mu=\overline{\dim}_P\,\mu=s$, we may say that $\mu$ has \emph{exact} dimension $s$. The following well-known formulas for dimensions of measures in Euclidean space $\RR^d$ (see e.g.~\cite{Falconer-techniques}) continue to persist in the metric setting:

\begin{theorem}[{Tamashiro \cite[Theorem 1.8]{tamashiro}}] \label{metric-local-dim} Let $\XX$ be a separable metric space. If $\mu$ is a finite Borel measure on $\XX$, then \begin{align}
  \label{dim1} \underline{\dim}_H\,\mu &= \mu\dash\essinf_{x\in\XX} \left(\liminf_{r\downarrow 0} \frac{\log \mu(B(x,r))}{\log r}\right),\\
  \label{dim2} \overline{\dim}_H\,\mu &= \mu\dash\esssup_{x\in\XX} \left(\liminf_{r\downarrow 0} \frac{\log \mu(B(x,r))}{\log r}\right),\\
  \label{dim3} \underline{\dim}_P\,\mu &= \mu\dash\essinf_{x\in\XX} \left(\limsup_{r\downarrow 0} \frac{\log \mu(B(x,r))}{\log r}\right),\\
  \label{dim4} \overline{\dim}_P\,\mu &= \mu\dash\esssup_{x\in\XX} \left(\limsup_{r\downarrow 0} \frac{\log \mu(B(x,r))}{\log r}\right).
\end{align}\end{theorem}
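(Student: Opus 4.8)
The plan is to deduce all four identities from four one-sided, pointwise-to-set estimates --- following the Euclidean template \cite{Falconer-techniques} but choosing the covering tools so that they survive in an arbitrary separable metric space. Abbreviate $\underline{d}_\mu(x):=\liminf_{r\downarrow 0}\tfrac{\log\mu(B(x,r))}{\log r}$ and $\overline{d}_\mu(x):=\limsup_{r\downarrow 0}\tfrac{\log\mu(B(x,r))}{\log r}$. The needed estimates are: \textbf{(L1)} if $\underline{d}_\mu\geq s$ on a Borel set $A$ with $\mu(A)>0$, then $\dim_H A\geq s$; \textbf{(L2)} if $\underline{d}_\mu\leq s$ on $A$, then $\dim_H A\leq s$; \textbf{(L3)} if $\overline{d}_\mu\geq s$ on $A$ with $\mu(A)>0$, then $\dim_P A\geq s$; \textbf{(L4)} if $\overline{d}_\mu\leq s$ on $A$, then $\dim_P A\leq s$. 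Granting these, each of \eqref{dim1}--\eqref{dim4} follows by a soft measure-theoretic argument: for \eqref{dim1}, if $s_0$ denotes the $\mu$-essential infimum of $\underline{d}_\mu$, then any Borel $E$ with $\mu(E)>0$ meets the $\mu$-full set $\{\underline{d}_\mu\geq s_0\}$ in positive measure, so (L1) gives $\dim_H E\geq s_0$; while for each $\epsilon>0$ the set $\{\underline{d}_\mu<s_0+\epsilon\}$ has positive measure and (L2) bounds its Hausdorff dimension by $s_0+\epsilon$, whence $\underline{\dim}_H\mu=s_0$. The identities \eqref{dim2}, \eqref{dim3}, \eqref{dim4} are handled identically, pairing \eqref{dim2} with (L1)--(L2) and the packing identities with (L3)--(L4).

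Three of the four estimates are routine and valid in every metric space. For (L1), fix $\epsilon\in(0,s)$; each $x\in A$ satisfies $\mu(B(x,r))\leq r^{s-\epsilon}$ for all $r$ below some $r_x>0$, so stratifying $A=\bigcup_n\{x\in A:r_x>1/n\}$ gives a level $A_n$ with $\mu(A_n)>0$, and the mass distribution principle applies on $A_n$: for a cover $\{E_i\}$ of $A_n$ by sets of diameter $<1/n$ and points $x_i\in E_i\cap A_n$ one has $\mu(E_i)\leq\mu(B(x_i,\diam E_i))\leq(\diam E_i)^{s-\epsilon}$, so $\sum_i(\diam E_i)^{s-\epsilon}\geq\mu(A_n)>0$, with no covering theorem used; let $\epsilon\downarrow 0$. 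For (L2), fix $\epsilon>0$ and note that $\{B(x,r):x\in A,\ r<\delta,\ \mu(B(x,r))>r^{s+\epsilon}\}$ is a fine cover of $A$; the basic $5r$-covering lemma (valid in any metric space) extracts a disjoint subfamily $\{B(x_i,r_i)\}$ whose fivefold dilates cover $A$, and then $\sum_i(10r_i)^{s+\epsilon}\leq 10^{s+\epsilon}\sum_i\mu(B(x_i,r_i))\leq 10^{s+\epsilon}\mu(\XX)$ bounds $\Haus^{s+\epsilon}_{10\delta}(A)$ uniformly in $\delta$. For (L4), stratify $A=\bigcup_n\{x\in A:\mu(B(x,r))>r^{s+\epsilon}\ \forall\,r<1/n\}$ and observe that any disjoint packing of a level by balls of radius $<1/n$ obeys $\sum_i(2r_i)^{s+\epsilon}\leq 2^{s+\epsilon}\mu(\XX)$, so the $(s+\epsilon)$-packing premeasure of each level is finite, $\dim_P$ of each level is at most $s+\epsilon$, and countable stability of $\dim_P$ finishes (L4).

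The real work --- and the main obstacle --- is (L3): the classical proof feeds the fine cover $\{B(x,r):\mu(B(x,r))<r^t\}$ into a Vitali/Besicovitch covering theorem for $\mu$, which is unavailable in a general metric space, so a substitute is needed. The plan is to use the standard identity $\dim_P A=\inf\{\sup_i\overline{\dim}_B A_i:A\subseteq\bigcup_iA_i\}$ together with countable subadditivity of $\mu$ to reduce (L3) to the claim: \emph{if $F$ is Borel, $\mu(F)>0$ and $\overline{d}_\mu>t$ on $F$, then $\overline{\dim}_B F\geq t$.} Suppose not; then for some $\eta>0$ and all small $\delta$ one can cover $F$ by at most $\delta^{-(t-\eta)}$ balls of radius $\delta$ (otherwise $\overline{\dim}_B F=\infty$ already). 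The device replacing Vitali is a \emph{scale-matching} observation: since $r\mapsto\mu(B(x,r))$ is nondecreasing, a single inequality $\mu(B(x,r))<r^t$ with $r<1$ forces $\mu(B(x,\delta))<\delta^{\,t-\eta/2}$ for \emph{every} $\delta$ in the interval $[\,r^{\,t/(t-\eta/2)},\,r\,]$, whose multiplicative length tends to $\infty$ as $r\downarrow 0$ and hence eventually contains two consecutive dyadic scales $\delta_j=2^{-j}$ and $2\delta_j$. Since $\mu$-a.e.\ $x\in F$ has $\mu(B(x,r))<r^t$ at arbitrarily small $r$, it follows that $\mu$-a.e.\ $x\in F$ lies in $G_j:=\{x\in F:\mu(B(x,2\delta_j))<(2\delta_j)^{t-\eta/2}\}$ for infinitely many $j$, so $\mu(F)\leq\sum_{j\geq J}\mu(G_j)$ for every $J$ and therefore $\sum_j\mu(G_j)=\infty$. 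On the other hand, covering $G_j\subseteq F$ by at most $\delta_j^{-(t-\eta)}$ balls of radius $\delta_j$, any covering ball meeting $G_j$ lies in a ball $B(x,2\delta_j)$ with $x\in G_j$ and hence has $\mu$-measure $<(2\delta_j)^{t-\eta/2}$, so $\mu(G_j)\leq 2^t\delta_j^{\eta/2}$ and $\sum_j\mu(G_j)<\infty$ --- a contradiction. This establishes the claim, hence (L3), and with it all of \eqref{dim1}--\eqref{dim4}. I note that in the Ahlfors $q$-regular setting used elsewhere in this paper one could instead prove (L3) by the classical route, since a doubling space enjoys a Besicovitch-type covering property and the $b$-adic cubes of \S\ref{ss:cubes} are available; the scale-matching argument is needed only for the general separable metric space asserted in the theorem.
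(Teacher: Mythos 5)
The paper offers no proof of Theorem \ref{metric-local-dim} at all: it is quoted from Tamashiro \cite{tamashiro}, so the only "route" in the paper is a citation, and your argument should be judged as a self-contained replacement. It is correct. The skeleton (the four pointwise-to-set estimates (L1)--(L4) plus the soft essential-inf/sup bookkeeping) is the standard one from the Euclidean setting, and (L1), (L2), (L4) are carried out with tools (mass distribution, the $5r$-covering lemma, direct packing counts) that indeed survive in any metric space with $\mu(\XX)<\infty$. The genuinely different ingredient is your (L3): the classical proof runs the fine cover $\{B(x,r):\mu(B(x,r))<r^t\}$ through a Besicovitch/Vitali theorem for $\mu$, which is unavailable here, whereas you pass to the modified upper box formulation of $\dim_P$ (legitimate for the radial packing measure adopted in \S\ref{ss:dimension}), use monotonicity of $r\mapsto\mu(B(x,r))$ to pin the good radii to consecutive dyadic scales at the cost of relaxing the exponent from $t$ to $t-\eta/2$, and then play the divergence of $\sum_j\mu(G_j)$ (each $x\in F$ lies in infinitely many $G_j$) against the convergent bound $\mu(G_j)\leq 2^{t}\delta_j^{\eta/2}$ furnished by the assumed box-counting bound on $F$ itself; this needs no covering theorem, only finiteness of $\mu$, and the contradiction is sound. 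Two details deserve tightening. First, in the reduction of (L3) the pieces $A\cap A_i$ of an arbitrary countable cover need not be Borel; either state your claim for arbitrary $F$ with $\mu^*(F)>0$ (nothing in its proof uses measurability of $F$: outer-measure subadditivity gives $\mu^*(F)\leq\sum_{j\geq J}\mu^*(G_j)$, and the covering bound controls $\mu^*(G_j)$), or replace each $A_i$ by its closure, which changes neither $\overline{\dim}_B A_i$ nor the hypothesis on $A\cap\overline{A_i}$, which is then Borel. Second, the soft step uses level sets of the local dimensions, so you should record that $x\mapsto\mu(B(x,r))$ is upper semicontinuous for closed balls and that, by monotonicity in $r$, the liminf and limsup can be computed along rational radii, making $\liminf_{r\downarrow0}\log\mu(B(x,r))/\log r$ and $\limsup_{r\downarrow0}\log\mu(B(x,r))/\log r$ Borel functions. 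With these routine remarks added, your proof is complete and somewhat more economical than the classical covering-theorem route, at the price of the small exponent loss $\eta/2$, which the limiting argument absorbs.
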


The quantities $\liminf_{r\downarrow 0} \log\mu(B(x,r))/\log(r)$ and $\limsup_{r\downarrow 0}\log\mu(B(x,r)/\log(r)$ are called the \emph{lower} and \emph{upper local dimensions} of $\mu$ at $x$, respectively.
On doubling metric spaces, one may replace balls $B(x,r)$ in \eqref{dim1}--\eqref{dim4} with $b$-adic cubes.

\begin{lemma}[see {\cite[Proposition 3.1]{KRS-multifractal}}]  Let $\XX$ be a doubling metric space and let $(\Delta_k)_{k\in\ZZ}$ be a system of $b$-adic cubes for $\XX$. Let $Q_k(x)$ denote the cube in $\Delta_k$ containing $x\in\XX$. If $\mu$ is a Borel measure on $\XX$ that is finite on bounded sets, then at $\mu$-a.e.~$x\in\XX$, \begin{align}
  \label{dim5} \liminf_{r\downarrow 0} \frac{\log \mu(B(x,r))}{\log r}&=\liminf_{k\rightarrow\infty} \frac{\log \mu(Q_k(x))}{\log b^{-k}}, \\
  \label{dim6} \limsup_{r\downarrow 0} \frac{\log \mu(B(x,r))}{\log r}&=\limsup_{k\rightarrow\infty} \frac{\log \mu(Q_k(x))}{\log b^{-k}}.
\end{align}
\end{lemma}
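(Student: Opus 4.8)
The plan is to avoid comparing balls with cubes directly and instead to interpose a ``fattened cube''
\[
\tilde{Q}_k(x):=\bigcup\{R\in\Delta_k:\gap(R,Q_k(x))\le b^{-k}\},
\]
the union of the level-$k$ cube through $x$ together with every level-$k$ cube within distance $b^{-k}$ of it. Because $\XX$ is doubling and the roundness bounds of Theorem \ref{KRS-cubes} give $U(x_R,c_bb^{-k})\subset R$ and $\diam R\le 2C_bb^{-k}$ for every $R\in\Delta_k$, there is an integer $M=M(\XX)$, independent of $k$, such that $\tilde{Q}_k(x)$ is a union of at most $M$ cubes of $\Delta_k$, any fixed $R\in\Delta_k$ lies within distance $b^{-k}$ of at most $M$ cubes of $\Delta_k$, and $B(x,b^{-k})\subseteq\tilde{Q}_k(x)\subseteq B(x,C_*b^{-k})$ for some $C_*=C_*(\XX)$. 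First I would reduce to the case $\mu(\XX)<\infty$: the identities \eqref{dim5}--\eqref{dim6} are local and $\Delta$ exhausts $\XX$ (Remark \ref{r:exhaustive}), so it suffices to prove them for $\mu\res Q^o_{-j}$ for each $j$ and then let $j\to\infty$; I would also dispose of atoms separately, since at an atom $x$ every quantity in \eqref{dim5}--\eqref{dim6} equals $0$ (the cubes and balls through $x$ shrink to $\{x\}$ because $\diam Q\le 2C_b\,\side Q$, while $\mu(\{x\})>0$).

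Granting this setup, the routine half of the argument runs as follows. Since $r\mapsto\mu(B(x,r))$ is nondecreasing and $\log b^{-k}$ is affine in $k$, the sandwich $B(x,b^{-k})\subseteq\tilde{Q}_k(x)\subseteq B(x,C_*b^{-k})$ forces
\[
\liminf_{k\to\infty}\frac{\log\mu(\tilde{Q}_k(x))}{\log b^{-k}}=\liminf_{r\downarrow0}\frac{\log\mu(B(x,r))}{\log r},\qquad\limsup_{k\to\infty}\frac{\log\mu(\tilde{Q}_k(x))}{\log b^{-k}}=\limsup_{r\downarrow0}\frac{\log\mu(B(x,r))}{\log r}
\]
at every $x$. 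Moreover $Q_k(x)\subseteq\tilde{Q}_k(x)$ gives $\log\mu(Q_k(x))\le\log\mu(\tilde{Q}_k(x))<0$ for all large $k$, at $\mu$-a.e.\ $x$ (for each $k$ the union of the $\mu$-null cubes in $\Delta_k$ is $\mu$-null), hence $\log\mu(Q_k(x))/\log b^{-k}\ge\log\mu(\tilde{Q}_k(x))/\log b^{-k}$. Combined with the display, this already yields the ``$\ge$'' halves of both identities in the lemma, so the whole content lies in the reverse bound $\mu(Q_k(x))\gtrsim\mu(\tilde{Q}_k(x))$ up to a subexponential factor.

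Concretely, I would show that for $\mu$-a.e.\ $x$ and every $c>0$ one has $\mu(Q_k(x))>b^{-ck}\mu(\tilde{Q}_k(x))$ for all sufficiently large $k$; taking logs, dividing by $\log b^{-k}<0$, letting $k\to\infty$, and then letting $c\downarrow0$ gives the ``$\le$'' halves. To prove this, fix $c>0$, write $\Delta_k=\{Q^{(\alpha)}\}$ (a genuine partition of $\XX$, by nesting), and let $B_k$ be the union of those $Q^{(\alpha)}$ for which $\mu(Q^{(\alpha)})\le b^{-ck}\mu(\tilde{Q}_k^{(\alpha)})$, where $\tilde{Q}_k^{(\alpha)}$ is the constant value of $\tilde{Q}_k(\cdot)$ on $Q^{(\alpha)}$. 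The crucial observation is that each $R\in\Delta_k$ is contained in at most $M$ of the sets $\tilde{Q}_k^{(\alpha)}$ (since $R\subseteq\tilde{Q}_k^{(\alpha)}$ forces $\gap(R,Q^{(\alpha)})\le b^{-k}$), so
\[
\mu(B_k)=\sum_{\alpha\text{ bad}}\mu(Q^{(\alpha)})\le b^{-ck}\sum_{\alpha\text{ bad}}\mu(\tilde{Q}_k^{(\alpha)})\le b^{-ck}\,M\,\mu(\XX),
\]
which is summable in $k$; Borel--Cantelli then gives $\mu(\limsup_k B_k)=0$, and taking $c=1/n$ and intersecting over $n\in\NN$ finishes the argument.

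The main obstacle is precisely this reverse bound, and the reason a measure-theoretic detour is unavoidable: because $c_b<C_b$ in the roundness bounds of Theorem \ref{KRS-cubes}, a point $x$ can sit within an arbitrarily small fraction of the side length from the boundary of $Q_{k-j}(x)$ for every $k$ and every fixed $j$, so one cannot trap $\tilde{Q}_k(x)$ (or even $B(x,b^{-k})$) inside a bounded-generation ancestor of $x$ and there is no deterministic comparison of $\mu(B(x,r))$ with the measures of the cubes through $x$ — this already fails for $x=1/3$ in the dyadic grid on $\RR$. The Borel--Cantelli estimate replaces that comparison: level-$k$ cubes carrying a negligible fraction of the mass of their own neighborhood have total $\mu$-mass $\lesssim b^{-ck}$ at each scale, which upon summing over scales is exactly enough. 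The remaining points — the doubling count $M$, the ball-sandwich for $\tilde{Q}_k(x)$, the passage between the scales $b^{-k}$ and arbitrary $r$, Borel-measurability of the sets $B_k$, and the reduction to finite $\mu$ — are all standard.
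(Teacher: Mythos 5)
Your proof is correct: the paper itself gives no proof of this lemma (it is quoted from \cite{KRS-multifractal}), and your route---sandwiching $Q_k(x)$ between $B(x,b^{-k})$ and the bounded union $\tilde Q_k(x)$ of neighboring level-$k$ cubes, then discarding via Borel--Cantelli the level-$k$ cubes that carry less than a $b^{-ck}$ fraction of the mass of their neighborhood, using the overlap bound $M$ and the finiteness of $\mu$ after localizing to the cubes $Q^o_{-j}$ that exhaust $\XX$---is essentially the standard argument behind the cited Proposition 3.1. The only caveat is minor: your count of $M$ via the pairwise disjoint inner balls $U(x_R,c_b b^{-k})$ requires $c_b>0$, i.e.\ $b>3$ in Theorem \ref{KRS-cubes}, which is harmless here since the paper only ever works with $b\geq 47$.
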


\subsection{Measures with prescribed values}

In the remainder of the paper, we will study the properties of certain measures on metric spaces $\XX$ whose balls are totally bounded, formally built by \begin{enumerate}
\item choosing a system of $b$-adic cubes $(\Delta_k)_{k\in\ZZ}$ on $\XX$;
\item specifying a function $w:\{\overline{Q}:Q\in\Delta_+\}\rightarrow [0,\infty)$ on the closure of cubes in $\Delta_+=\bigcup_{k=0}^\infty\Delta_k$ such that $w(\overline{Q})= \sum_{R\in\Child(Q)}w(\overline{R})$ for all $Q\in\Delta_+$ and extending the definition by assigning $w(\emptyset)=0$; and,
\item defining $\mu_w(E):=\inf\{\sum_{i=1}^\infty w(\overline{Q_i}): E\subset \bigcup_{i=1}^\infty \overline{Q_i}\text{ for some }Q_1, Q_2,\cdots \in \Delta_+\cup\{\emptyset\}\}$ for all $E\subset\XX$.
\end{enumerate} Because the weight $w$ is additive over children, we may alternatively write $$\mu_w(E)=\lim_{\delta\rightarrow 0} \inf\left\{\sum_{i=1}^\infty w(\overline{Q_i}): E\subset \bigcup_{i=1}^\infty\overline{Q}_i \text{ for some }Q_1, Q_2,\cdots \in \Delta_+\cup\{\emptyset\},\,\diam Q_i\leq \delta\right\}$$ It easily follows that $\mu_w$ is a metric outer measure and Borel sets are $\mu_w$ measurable. Further, because any ball can be covered by a finite number of cubes of side length 1 and the outer measure is defined using outer approximation by closed sets, it follows that $\mu_w$ is a Radon measure on $\XX$, i.e.~a locally finite Borel regular outer measure on $\XX$. For details, see e.g.~\cite{Rogers}. However, it is an unpleasant reality that the measures $\mu_w(Q)$ and $\mu_w(\overline{Q})$ and the weight $w(\overline{Q})$ do not need to agree on cubes $Q\in\Delta_+$. There are two issues.

\begin{example}Let $\XX=\RR$ and let $(\Delta_k)_{k\in\ZZ}$ be the system of left-open triadic intervals. Define a weight $w$ so that $w([0,3^{-k}])=1$ for all $k\geq 0$, and $w(\overline{I})=0$ for all other triadic intervals of length at most 1. Then $\mu_w([0,1])=0$ even though $w([0,1])=1$. To see this, simply note that $$[0,1]\subset[-1,0]\cup\bigcup_{k=1}^\infty [3^{-k},2\cdot 3^{-k}]\cup [2\cdot 3^{-k}, 3\cdot 3^{-k}]$$ and the weight of each closed triadic interval on the right hand side is zero. The difficulty in this example is that $w$ is not countably subadditive.\end{example}

\begin{example}Let $\XX=\QQ$ be equipped with the subspace metric from $\RR$, which is a doubling metric space. Let $(\Delta_k)_{k\in\ZZ}$ be the system of left-open triadic intervals (in $\QQ$). Define a weight $w$ so that $w(\QQ\cap[n,n+1])=1$ for all $n\in\ZZ$ and on any triadic interval $I=L\cup M\cup R$ of side length at most 1, $w(\overline{L})=w(\overline{M})=w(\overline{R})=(1/3)w(\overline{I})$, where $L$, $M$, and $R$ are the left, middle, and right triadic children of $I$, respectively. In contrast to the previous example, the weight $w$ has the nice property that $w$ is centrally doubling insofar as $w(\overline{I^\downarrow})=w(\overline{M})\gtrsim w(\overline{I})$ for all $I\in\Delta_+$. Nevertheless, $\mu_w$ is the zero measure. Indeed, $\mu_w$ is nothing other than the restriction of Lebesgue outer measure on $\RR$ to the power set of $\QQ$. Since $\QQ$ is countable, $\mu_w(\QQ)=0$. The difficulty in this example is that $\QQ$ is not complete.\end{example}

The following criterion is sufficient to ensure that $\mu_w$ takes prescribed values. It gives one possible solution to the technical issue described in \cite[Remark 5.1(1)]{KRS-cubes}.

\begin{lemma}[extension criterion]\label{extension} Assume that $\XX$ is a proper metric space (i.e.~every closed ball in $\XX$ is compact) and $b>5$. If there exists a constant $0<p\leq 1$ such that $w(\overline{Q^\downarrow})\geq p w(\overline{Q})$ and $w(\overline{Q})= \sum_{R\in\Child(Q)}w(\overline{R})$ for all $Q\in\Delta_+$, then $\mu_w(\partial Q)=0$ and $\mu_w(\interior{Q})=\mu_w(Q)=\mu_w(\overline{Q})=w(\overline{Q})$ for all $Q\in\Delta_+$.\end{lemma}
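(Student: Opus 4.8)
\textit{Proof proposal.} The plan is to establish only the two nontrivial assertions $\mu_w(\interior Q)\ge w(\overline Q)$ and $\mu_w(\partial Q)=0$; the rest is immediate, since $\{\overline Q\}$ is itself an admissible cover of $\overline Q$, giving $\mu_w(\interior Q)\le\mu_w(Q)\le\mu_w(\overline Q)\le w(\overline Q)$, after which $\mu_w(\partial Q)=\mu_w(\overline Q)-\mu_w(\interior Q)=0$ follows from the inner bound. First I would reduce the target lower bound to a statement about covers: it suffices to show that every countable cover $\interior Q\subseteq\bigcup_i\overline{Q_i}$ with $Q_i\in\Delta_+$ satisfies $\sum_i w(\overline{Q_i})\ge w(\overline Q)$. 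Discarding the $Q_i$ whose closure is disjoint from the open set $\interior Q$ (they cannot help), and noting $w(\overline{Q_i})\ge w(\overline Q)$ whenever $Q_i$ is an ancestor of $Q$ (by additivity, all weights being nonnegative), one may assume every $Q_i$ is a descendant of $Q$.

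The heart of the matter is the construction of an auxiliary Borel measure. Fix $P\in\Delta_0$ with $Q\subseteq P$; since $\XX$ is proper and $\diam P\le 2C_b$, the closure $\overline P$ is compact. For $n\ge 0$ set $\nu_n:=\sum_{T\in\Delta_n,\ T\subseteq P}w(\overline T)\,\delta_{x_T}$, a measure on $\overline P$ whose total mass is $w(\overline P)$ for every $n$ by the additivity hypothesis. By weak compactness of bounded-mass measures on the compact metric space $\overline P$ (Helly's selection theorem), some subsequence $\nu_{n_j}$ converges weakly to a measure $\nu$ on $\overline P$ with $\nu(\overline P)=w(\overline P)$; extend $\nu$ to $\XX$ by zero. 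I claim $\nu(\interior T)=w(\overline T)$ for every cube $T\subseteq P$. The bound ``$\le$'' follows from the portmanteau theorem on the open set $\interior T$ together with $\nu_n(\interior T)\le\nu_n(T)=w(\overline T)$ for $n$ at least the level of $T$. For ``$\ge$'' I would peel off central children: writing $\mathcal N_0(T)=\{T\}$ and letting $\mathcal N_{m+1}(T)$ consist of the non-central children of members of $\mathcal N_m(T)$, iterated additivity gives $w(\overline T)=\sum_{m<M}\sum_{S\in\mathcal N_m(T)}w(\overline{S^\downarrow})+\sum_{S\in\mathcal N_M(T)}w(\overline S)$, and the hypothesis $w(\overline{S^\downarrow})\ge p\,w(\overline S)$ forces the remainder to be $\le(1-p)^M w(\overline T)\to 0$. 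The cubes $\{S^\downarrow:S\in\mathcal N_m(T),\ m\ge 0\}$ have pairwise disjoint closures, all contained in $\interior T$: each $\overline{S^\downarrow}\subseteq\interior S$ by the gap estimate \eqref{b-gap} (this is where $b>5$ is used), and the tree structure of the indices keeps distinct central descendants inside disjoint open cubes. Hence, for each $M$, $C_M:=\bigsqcup_{m<M}\bigsqcup_{S\in\mathcal N_m(T)}\overline{S^\downarrow}$ is a finite disjoint union of closed cubes lying in $\interior T$, and $\nu(\interior T)\ge\nu(C_M)\ge\limsup_n\nu_n(C_M)\ge\sum_{m<M}\sum_{S\in\mathcal N_m(T)}w(\overline{S^\downarrow})$ (portmanteau on the closed set $C_M$, and $\nu_n(C_M)=\sum_S\nu_n(\overline{S^\downarrow})\ge\sum_S w(\overline{S^\downarrow})$ for $n$ large); letting $M\to\infty$ yields $\nu(\interior T)\ge w(\overline T)$.

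I would then upgrade this to $\nu(\overline T)=w(\overline T)$ and $\nu(\partial T)=0$ for all cubes $T\subseteq P$ by a conservation-of-mass argument. For each $m$ the level-$m$ descendants of $P$ partition $P$, so $w(\overline P)=\nu(\overline P)\ge\nu(P)=\sum_{R\in\Child^m P}\nu(R)\ge\sum_{R\in\Child^m P}\nu(\interior R)=\sum_{R\in\Child^m P}w(\overline R)=w(\overline P)$; every inequality is therefore an equality, which forces $\nu(R)=\nu(\interior R)=w(\overline R)$ for all $R\subseteq P$, forces $\nu(\overline P\setminus P)=0$, and then, writing $\overline R\setminus R$ as a countable union of sets $\overline R\cap R'\subseteq R'\setminus\interior R'$ (each of $\nu$-measure zero) together with a subset of $\overline P\setminus P$, gives $\nu(\overline R)=\nu(R)=w(\overline R)$ and $\nu(\partial R)=0$. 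Now $\nu$ is a genuine Borel measure with $\nu(\overline{Q_i})=w(\overline{Q_i})$ for every descendant $Q_i$ of $Q$, so for any cover $\interior Q\subseteq\bigcup_i\overline{Q_i}$ with all $Q_i$ descendants of $Q$ we obtain $w(\overline Q)=\nu(\interior Q)\le\nu\big(\bigcup_i\overline{Q_i}\big)\le\sum_i\nu(\overline{Q_i})=\sum_i w(\overline{Q_i})$, which is exactly the reduced claim, and the lemma follows. (The degenerate possibility $Q=\XX$, which can occur only when $\XX$ is bounded, is handled identically with $P=\XX$.)

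The step I expect to be the main obstacle is the lower bound $\mu_w(\interior Q)\ge w(\overline Q)$ itself: estimating a cover of $\interior Q$ from below naively reduces, after refinement, to the very same inequality for smaller cubes and never terminates. Introducing the measure $\nu$—built independently of $\mu_w$ and then shown to sit below the premeasure $w$ on the relevant cubes—is what breaks this circularity, and it is precisely here that properness of $\XX$ (compactness of $\overline P$, hence existence of the weak limit and conservation of mass) and the central-child condition (finiteness of the peeling error and disjointness of the deep cubes) are both essential. Within this, the most delicate bookkeeping will be verifying that the central descendants $S^\downarrow$, $S\in\bigcup_m\mathcal N_m(T)$, have pairwise disjoint closures contained in $\interior T$.
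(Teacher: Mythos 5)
Your proof is correct, but it takes a genuinely different route from the paper's at the crucial step. The paper (i) disposes of $\partial Q$ directly, covering it by the $j$-th generation descendants $R$ of $Q$ with $\mathsf{cc}(Q,R)=0$, whose total weight is at most $(1-p)^j w(\overline{Q})$, and (ii) proves countable subadditivity of the premeasure $w$ itself: each closed cube $\overline{Q}_n$ in a countable cover of the compact set $\overline{Q}$ is fattened to an open set $U_n$ at the cost of auxiliary non-central descendants of nearby same-size cubes of total weight at most $2^{-n}\epsilon$, a finite subcover is extracted by compactness, and finite additivity of $w$ finishes. You instead build an honest comparison measure $\nu$ as a weak-$*$ limit of the weighted point masses $\sum_T w(\overline{T})\,\delta_{x_T}$ on the compact cube $\overline{P}$, prove $\nu(\interior T)=\nu(\overline{T})=w(\overline{T})$ for all cubes $T\subset P$ (the lower bound by the same central-child peeling $(1-p)^M\to 0$ together with the $b>5$ gap estimate \eqref{b-gap}, closures and boundaries by your conservation-of-mass chain), and then test an arbitrary cover of $\interior Q$ against $\nu$. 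Both arguments consume properness and the central-child hypothesis in the same way; the paper's is more elementary (no portmanteau or weak-$*$ compactness, and no appeal to Borel measurability of $\mu_w$, which you do need in order to split $\mu_w(\overline{Q})=\mu_w(\interior Q)+\mu_w(\partial Q)$, though the setup does assert that $\mu_w$ is Radon), while yours trades the fattening bookkeeping with the auxiliary families for the explicit construction of $\nu$, which is conceptually clean and close to how Bernoulli-type measures are usually produced. One phrase to tighten, a detail rather than a gap: when $S_1$ is a peeling-ancestor of $S_2$, the open cubes $\interior S_1\supset\interior S_2$ are not disjoint; the disjointness of $\overline{S_1^\downarrow}$ and $\overline{S_2^\downarrow}$ instead follows because $S_2$ lies inside a non-central child of $S_1$, so $S_1^\downarrow\cap S_2=\emptyset$ and hence $\overline{S_1^\downarrow}\cap\overline{S_2^\downarrow}\subset\overline{S_1^\downarrow}\cap\interior S_2=\emptyset$.
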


\begin{proof} Let $Q\in\Delta_+$. If it happens that $Q=\XX$, then $\mu_w(\partial Q)=\mu_w(\emptyset)=0$. Suppose that $Q\neq\XX$. Since $b>5$, there exists a constant $\delta=\delta(b)>0$ such that $\gap(P^\downarrow,\XX\setminus P)\geq \delta \side P^\downarrow >0$ for all $P\in\Delta$ such that $P\neq\XX$ by Remark \ref{r:central}. Hence $\partial Q$ is covered by $$\{\overline{R}:R\in\Child^j(Q)\text{ and } \mathsf{cc}(Q,R)=0\},$$ where $\mathsf{cc}(Q,R)$ is given by \eqref{cc-def}. Thus, by the hypothesis, $\mu_w(\partial Q)\leq (1-p)^j w(\overline{Q})$ for all $j\geq 1$. It follows that $\mu_w(\partial Q)=0$ and $\mu_w(\interior{Q})=\mu_w(Q)=\mu_w(\overline{Q})\leq w(\overline{Q})$. To finish, it suffices to show that $w$ is countably subadditive, i.e.~$w(\overline{Q})\leq \sum_{n=1}^\infty w(\overline{Q}_n)$ whenever $\overline{Q}\subset\bigcup_{n=1}^\infty \overline{Q}_n$ for some sequence $Q_n\in\Delta_+\cup\{\emptyset\}$, because this implies $w(\overline{Q})\leq \mu_w(\overline{Q})$.

Suppose that $\overline{Q}\subset \bigcup_{n=1}^\infty \overline{Q}_n$ for some sequence $Q_n\in\Delta_+\cup\{\emptyset\}$. Fix $\epsilon>0$. By the same reasoning as in the previous paragraph, for each $n\geq 1$, we can choose $j=j(Q_n)$ sufficiently large so that \begin{align*}\mathsf{Aux}(Q_n):=\{R\in \Child^j(P):\, &P\in\Delta_+,\; \side P=\side Q_n,\;\\
 &\gap(P,Q_n)<\side Q_n,\; \mathsf{cc}(P,R)=0\}\end{align*} satisfies $\sum_{R\in\mathsf{Aux}(Q_n)}w(\overline{R})\leq 2^{-n}\epsilon$. (The set of all $P\in\Delta_+$ such that $\side P=\side Q_n$ and $\gap(P,Q)\leq \side Q_n$ is finite, because the set $\{U_P\}$ of associated inner balls are pairwise disjoint, have the same radius $c_b \side Q_n$, are contained in $B(x_{Q_n},(2C_b+1)\side Q_n)$, and balls in $\XX$ are totally bounded.) For each $n\geq 1$, define the open set $$U_n:=\bigcup_{x\in Q_n} U\left(x,\tfrac{1}{2}\delta b^{-j(Q_n)}\side Q_n\right).$$ If $y\in U_n\setminus \{\overline{Q}_n\}$ and $P_y\in\Delta_+\setminus\{Q_n\}$ is the cube containing $y$ with $\side P_y=\side Q_n$, then $\gap(P_y,Q_n)\leq \dist(y,Q_n)<\frac{1}{2}\delta b^{-j}\side Q_n<\side Q_n$; moreover, if $R_y\in\Child^{j(Q_n)}(P_y)$ is the descendent containing $y$, then $R_y\in\mathsf{Aux}(Q_n)$. For the latter claim, simply note that any $R\in\Child^{j(Q_n)}(P_y)$ with $\mathsf{cc}(P_y,R)\geq 1$ has $$\gap(R,Q_n)\geq \gap(R,\XX\setminus P_y)\geq \delta \side R=\delta b^{-j(Q_n)}\side Q_n>2\dist(y,Q_n),$$ whence $y\not\in R$. Everything considered, $U_n$ is an open set and $\overline{Q}_n\subset U_n\subset \overline{Q}_n\cup\bigcup\mathsf{Aux}(Q_n)$. It follows that $\{U_n:n\geq 1\}$ is an open cover of $\overline{Q}$, since $\{\overline{Q}_n: n\geq 1\}$ covers $\overline{Q}$, and  $\overline{Q}$ is compact, since $\XX$ is proper. Hence, after relabeling, we can assume that $U_1,\dots, U_k$ cover $\overline{Q}$ for some $k$. In particular, $$\mathcal{F}:=\{\overline{Q}_1,\dots,\overline{Q}_k\}\cup\bigcup_{j=1}^k \{\overline{R}:R\in\mathsf{Aux}(Q_j)\}$$ is a finite cover of $\overline{Q}$, with the total weight of auxiliary cubes $\sum_{j=1}^k\sum_{R\in\mathsf{Aux}(Q_j)}w(\overline{R})\leq \epsilon$. The subfamily $\{S\in\mathcal{F}:S\cap Q\neq\emptyset\}$ also covers $\overline{Q}$. Let $\mathcal{F}'$ be any minimal subcover of $\{S\in\mathcal{F}:S\cap Q\neq\emptyset\}$.
Because $\mathcal{F}'$ is finite, we can use finite additivity of $w$ to obtain $$w(\overline{Q})=\sum_{S\in\mathcal{F}'}w(S)\leq \sum_{S\in\mathcal{F}}w(S) \leq \epsilon+\sum_{j=1}^\infty w(\overline{Q}_n).$$ Sending $\epsilon\to 0$ confirms that $w$ is countably subadditive.
\end{proof}

\begin{remark}[how to use this practically] \label{practical} On a proper metric space $\XX$, we choose $b>5$ and fix a system  $(\Delta_k)_{k\in\ZZ}$ of $b$-adic cubes. To define a Radon measure $\mu$ on $\XX$ with prescribed values on $\Delta_+$, we may casually \begin{enumerate}
\item assign some arbitrary finite value $\mu(Q)$ for each $Q\in\Delta_0$ and
\item describe how to distribute the mass $\mu(Q)$ for each $Q\in\Delta_+$ among its children in any way such that $\mu(Q)=\sum_{R\in\Child(Q)}\mu(R)$ and $\mu(Q^\downarrow)\geq p\,\mu(Q)$ with $0<p\leq 1$ independent of $Q$.
\end{enumerate} Then the measure $\mu$ exists and is unique. Indeed, to show existence, define a weight $w$ such that $w(\overline{Q}):=\mu(Q)$ for all $Q\in\Delta_+$. Then $\mu_w$ is a Radon measure with $\mu_w(Q)=\mu(Q)$ and $\mu_w(\partial Q)=0$ for all $Q\in\Delta_+$ by Lemma \ref{extension}. We then relabel $\mu_w$ as $\mu$ and forget about the weight $w$. For uniqueness, simply note that the values of $\mu$ on $\Delta_+$ determine the values of $\mu$ on open sets, and thus, determine the values of $\mu$ on arbitrary sets, because $\mu$ is Radon.\end{remark}

\section{Estimates for quasi-Bernoulli measures}\label{s:bernoulli}

We begin by describing the measures in Theorem \ref{t:main} in a special case. The definition of the measures will rely in part on the following calculation, a simple exercise in calculus. For an introduction to the concept of entropy, we refer the reader to \cite{Cover-Thomas}.

\begin{lemma}\label{entropy-1} For every scaling factor $b>1$ and integer $N\geq 1$, the entropy function $h_{b,N}:(0,1/N]\rightarrow (0,\infty)$ given by \begin{equation} h_{b,N}(\delta):=(N-1)\delta\log_b\left(\frac{1}{\delta}\right)+(1-(N-1)\delta)\log_b\left(\frac{1}{1-(N-1)\delta}\right)\end{equation} is differentiable, monotone increasing, $h_{b,N}(0+)=0$, and $h_{b,N}(1/N)=\log_b(N)$.\end{lemma}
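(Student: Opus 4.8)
The whole statement reduces to one derivative computation, so the plan is: (1) verify differentiability and compute $h_{b,N}'$; (2) deduce monotonicity; (3) check the two boundary values by substitution. First I would note that on $(0,1/N]$ every logarithm argument is positive: $(N-1)\delta\le (N-1)/N<1$ forces $1-(N-1)\delta\in[1/N,1)$, and $\delta>0$. Hence $h_{b,N}$ is a finite linear combination of products of functions smooth on $(0,1/N]$ (with one-sided derivative understood at $\delta=1/N$), which settles differentiability. Writing $h_{b,N}(\delta)=-(N-1)\delta\log_b\delta-(1-(N-1)\delta)\log_b(1-(N-1)\delta)$ and differentiating each summand --- using $\frac{d}{dx}(x\log_b x)=\log_b x+\frac{1}{\ln b}$ and the chain rule with $v=1-(N-1)\delta$ --- the two $\frac{1}{\ln b}$ contributions cancel and one is left with
\[
 h_{b,N}'(\delta)=(N-1)\log_b\!\left(\frac{1-(N-1)\delta}{\delta}\right).
\]

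For monotonicity I would simply observe that $1-(N-1)\delta-\delta=1-N\delta>0$ on $(0,1/N)$, so the ratio inside the logarithm exceeds $1$; as $b>1$, $\log_b$ is increasing and strictly positive on $(1,\infty)$, whence $h_{b,N}'>0$ on $(0,1/N)$ when $N\ge2$. Combined with continuity on $(0,1/N]$ this gives strict monotonicity; note also $h_{b,N}'(1/N)=(N-1)\log_b1=0$, confirming $\delta=1/N$ is the maximizer. (When $N=1$ everything degenerates: $h_{b,1}\equiv0$, which is still weakly increasing; in that trivial case the stated codomain $(0,\infty)$ and the word ``increasing'' should be read in the non-strict sense.)

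The boundary values are then direct: as $\delta\downarrow0$ the first summand tends to $0$ by the elementary limit $x\log_b(1/x)\to0$ (applied with $x=(N-1)\delta$), and the second summand tends to $1\cdot\log_b1=0$, so $h_{b,N}(0+)=0$; and substituting $\delta=1/N$ gives $\tfrac{N-1}{N}\log_bN+\tfrac1N\log_bN=\log_bN$ since $1-(N-1)/N=1/N$. I do not anticipate any genuine obstacle here; the only points deserving a line of care are keeping the logarithm arguments positive throughout $(0,1/N]$, invoking the limit $x\log(1/x)\to0$, and flagging the degenerate $N=1$ case.
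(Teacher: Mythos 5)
Your proof is correct, and since the paper gives no argument for this lemma (it is explicitly left as ``a simple exercise in calculus''), your routine verification---computing $h_{b,N}'(\delta)=(N-1)\log_b\bigl(\tfrac{1-(N-1)\delta}{\delta}\bigr)$, noting the ratio exceeds $1$ on $(0,1/N)$, and checking the two boundary values---is exactly the intended argument. Your remark on the degenerate case $N=1$ (where $h_{b,1}\equiv 0$ and the statements hold only in the weak sense) is a fair observation and harmless, since the lemma is only applied with at least two children per cube.
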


\begin{example}[Euclidean space] \label{ex:Euc-1} Let $1\leq m\leq d-1$ be integers and let $s\in(m-1,m)$. We define a self-similar Bernoulli-type measure $\mu$ on $\XX=\RR^d$ by specifying its values on (origin-based or standard) triadic cubes in $\RR^d$ as follows. Let $\delta>0$ be a small number, whose exact value depending on $d$ and $s$ will be given momentarily.\begin{enumerate}
\item Declare $\mu(Q)=1$ for any triadic cube of side length 1.
\item For any triadic cube $Q$ of side length at most 1, declare $\mu(R)=\delta \mu(Q)$ for all non-central children $R$ of $Q$ and  declare $\mu(Q^\downarrow)=(1-(3^d-1)\delta)\mu(Q)$.
\end{enumerate} That is to say, below scale 1, $\mu$ is defined by concentrating most of the mass in the center of a cube. The measure $\mu$ is a doubling measure (see \cite[\S8.2]{BLZ} or \cite[\S3]{KRS-cubes}  for sample details) and the Hausdorff and packing dimension of $\mu$ is exact (see e.g.~\cite{dimension-of-measures} or \cite[\S1.5]{bishop-peres}) and is given by the entropy formula \begin{equation} h_{3,3^d}(\delta)=(3^d-1)\delta\log_3\left(\frac{1}{\delta}\right)+(1-(3^d-1)\delta)\log_3\left(\frac{1}{1-(3^d-1)\delta}\right).
\end{equation} To force $\mu$ to have dimension $s$, we simply choose $\delta=\delta(d,s)$ so that $h_{3,3^d}(\delta)=s$. Because $s>m-1$, we immediately see that $\mu$ is purely $(m-1)$-unrectifiable. Since $0<s<d$, it can be shown using the law of the iterated logarithm (see e.g.~\cite[Theorem 3.1]{measure-lil} or \cite[Theorem 1.1]{ifs-density}) that at $\mu$-a.e.~$x\in\RR^d$, \begin{equation}\label{limsup-infinity} \displaystyle\liminf_{r\downarrow 0}\dfrac{\mu(B(x,r))}{r^s}=\displaystyle\liminf_{k\rightarrow\infty}\dfrac{\mu(Q_k(x))}{3^{-ks}}=0,\quad \displaystyle\limsup_{r\downarrow 0}\dfrac{\mu(B(x,r))}{r^s}=\displaystyle\limsup_{k\rightarrow\infty}\dfrac{\mu(Q_k(x))}{3^{-ks}}=\infty,\end{equation} where $Q_k(x)$ denotes the triadic cube of side length $3^{-k}$ containing $x$. This means that it is impossible to use Theorem \ref{t:mm} (or the earlier results of \cite{MM1988} or \cite{BV}) to verify that $\mu$ is $m$-rectifiable. Nevertheless, using some estimates developed later in this section (inspired by \cite{GKS}) together with either Theorem \ref{t:pack} or Corollary \ref{c:pack}, it can be shown that $\mu$ is $m$-rectifiable (see \S\ref{ss:simple}).
\end{example}

Up to technical details, the measures in Theorem \ref{t:main} on an Ahlfors regular metric space are defined analogously to the measures in Example \ref{ex:Euc-1}. All difficulties stem from imprecise and locally varying counts of metric cubes. Instead of using two weights per cube to define the mass of children, we will need three. See Figure \ref{fig:qb}.

\begin{lemma}\label{entropy-2} If $b>1$ and $L$ and $M$ are integers such that $0\leq L\leq b^y$ and $M\geq b^s$ for some $s,y>0$, then there exists a number $\alpha_0=\alpha_0(b,y,s)$ such that for all $0\leq \alpha\leq \alpha_0$, there exist unique numbers $\beta=\beta(\alpha,b,y,s,L,M)$ and $\gamma=\gamma(\alpha,b,y,s,L,M)$ such that \begin{equation}\label{abc-1} L\alpha+(M-1)\beta+\gamma=1\end{equation} and the entropy function
\begin{equation}\label{abc-2}
h_{b,L,M}(\alpha,\beta):=L\alpha\log_b(1/\alpha) +(M-1)\beta\log_b(1/\beta)+\gamma\log_b(1/\gamma)=s.\end{equation} We may always bound $L\alpha\log_b(1/\alpha)\leq \min(1,s)/e$, $L\alpha\leq \min(1,s^2)/e^2$,\begin{equation}\label{gamma-bound} \gamma\geq 1-L\alpha-\frac{s-L\alpha\log_b(1/\alpha)}{\log_b(M-1)} \geq 1-\frac{\min(1,s^2)}{e^2}-\left(1-\frac{1}{e}\right)\frac{s}{\log_b(M-1)},\end{equation} \begin{equation}\label{gamma-bound-2} \text{and}\qquad \gamma\geq \frac{1-L\alpha}{M}\geq \frac{1}{M}\left(1-\frac{\min(1,s^2)}{e^2}\right)\geq \frac{1}{M}\left(1-\frac{1}{e^2}\right).\end{equation} Moreover, if $2e^2\log_b(e^2)\leq (\tfrac{1}{2}-\tfrac{1}{e})s$, then \begin{equation}\label{beta-bound} \beta\geq \frac{s}{2(M-1)\log_b(M-1)}.\end{equation}\end{lemma}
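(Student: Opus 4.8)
The plan is to use \eqref{abc-1} to eliminate $\gamma$ and to read \eqref{abc-2} as a one-variable equation whose left side is a concave ``entropy in one variable''. Writing $A:=L\alpha$ and $c:=L\alpha\log_b(1/\alpha)$ (with $c=0$ when $\alpha=0$) and parametrising by $u:=(M-1)\beta\in(0,1-A)$, so that $\gamma=1-A-u$, equation \eqref{abc-2} becomes $\tilde g(u)=s-c$ with $\tilde g(u):=u\log_b\tfrac{M-1}{u}+(1-A-u)\log_b\tfrac{1}{1-A-u}$. This $\tilde g$ is concave, with $\tilde g'(u)=\log_b\tfrac{(M-1)(1-A-u)}{u}$, hence increasing on $(0,u^*]$ and decreasing on $[u^*,1-A)$, where $u^*=\tfrac{(M-1)(1-A)}{M}$ is the value at which $\beta=\gamma=\tfrac{1-A}{M}$; moreover $\tilde g(0^+)=(1-A)\log_b\tfrac{1}{1-A}$ and $\tilde g(u^*)=(1-A)\log_b\tfrac{M}{1-A}$. (I will tacitly assume $b^s>2$, so that $M\ge b^s$ forces $M\ge 3$ and $\log_b(M-1)>0$; in every application of the lemma $b$ is huge and this is automatic.)

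Next I would fix $\alpha_0=\alpha_0(b,y,s)\in(0,1/e)$ small. Since $L\le b^y$, both $A=L\alpha\le b^y\alpha$ and $c\le b^y\alpha\log_b(1/\alpha)$ tend to $0$ with $\alpha$, so $\alpha_0$ can be chosen so that, for all $0\le\alpha\le\alpha_0$: (i) $c\le\min(1,s)/e$ and $A\le\min(1,s^2)/e^2$ (the two ``always'' bounds); (ii) $\alpha_0\le b^{-s}$, whence $\log_b(1/\alpha)\ge s$ and so $c=L\alpha\log_b(1/\alpha)\ge sA$; (iii) $(1-A)\log_b\tfrac{1}{1-A}<s-c$. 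Then $\tilde g(0^+)=(1-A)\log_b\tfrac1{1-A}<s-c$ by (iii), while $s-c\le s-sA=(1-A)s\le(1-A)\log_b M\le\tilde g(u^*)$ by (ii) and $\log_b M\ge s$. So by the intermediate value theorem and strict monotonicity of $\tilde g$ on $[0,u^*]$ there is a unique $u\in(0,u^*]$ with $\tilde g(u)=s-c$; set $\beta:=u/(M-1)>0$ and $\gamma:=1-A-u$. This is the pair I would take: it is the unique solution of \eqref{abc-1}--\eqref{abc-2} with $\beta\le\gamma$ (equivalently $\gamma\ge\tfrac{1-A}{M}$, equivalently $u\le u^*$), and as soon as $M$ is large enough that $s-c\le(1-A)\log_b\tfrac{M-1}{1-A}$ --- which covers all the applications, where $M\gtrsim_\XX b^q$ with $q>s$ --- it is the only solution at all.

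The remaining estimates I would read off this picture. For \eqref{gamma-bound-2}: $u\le u^*$ gives $\gamma=1-A-u\ge1-A-u^*=\tfrac{1-A}{M}$, then insert $A\le\min(1,s^2)/e^2\le e^{-2}$. For \eqref{gamma-bound}: the summand $\gamma\log_b(1/\gamma)$ in \eqref{abc-2} is nonnegative, so $(M-1)\beta\log_b(1/\beta)\le s-c$; since $\beta\le\tfrac{1-A}{M}\le\tfrac1M$ we have $1/\beta\ge M>M-1$, hence $(M-1)\beta\log_b(M-1)\le(M-1)\beta\log_b(1/\beta)\le s-c$, giving $(M-1)\beta\le\tfrac{s-c}{\log_b(M-1)}$ and therefore $\gamma=1-A-(M-1)\beta\ge1-A-\tfrac{s-c}{\log_b(M-1)}$; feeding in the bounds on $A$ and $c$ and simplifying gives the stated lower bound. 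For \eqref{beta-bound}: the hypothesis $2e^2\log_b(e^2)\le(\tfrac12-\tfrac1e)s$ forces $\ln b$, hence $b$, hence $M\ge b^s$, to be enormous. Supposing $u<\tfrac{s}{2\log_b(M-1)}$, in $\tilde g(u)=u\log_b(M-1)+u\log_b(1/u)+\gamma\log_b(1/\gamma)$ the first term is $<s/2$, while each of the last two is at most the global maximum $\tfrac{1}{e\ln b}$ of $t\mapsto t\log_b(1/t)$, which the hypothesis makes $\le\tfrac{(\frac12-\frac1e)s}{4e^3}$; summing, $\tilde g(u)<(1-\tfrac1e)s\le s-c$ (using $c\le\min(1,s)/e$), a contradiction. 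Hence $u\ge\tfrac{s}{2\log_b(M-1)}$, i.e.\ $\beta\ge\tfrac{s}{2(M-1)\log_b(M-1)}$.

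The conceptual content --- concavity and unimodality of the single-variable entropy $\tilde g$, just as in Lemma \ref{entropy-1} --- is immediate; the work, and the main obstacle, is the elementary-but-fiddly bookkeeping: choosing $\alpha_0=\alpha_0(b,y,s)$ so that the target $s-c$ provably lands strictly between $\tilde g(0^+)$ and $\tilde g(u^*)$ for every admissible triple $(L,M,\alpha)$, and pinning down the numerical constants in the simplified form of \eqref{gamma-bound}.
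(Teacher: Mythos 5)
This is essentially the paper's own argument: you eliminate $\gamma$ via \eqref{abc-1}, solve the resulting one-variable entropy equation on its increasing branch up to the symmetric point $\beta=\gamma$ (the paper's interval $(0,w/M]$ with $w=1-L\alpha$), use smallness of $\alpha_0$ together with $\alpha_0\le b^{-s}$ (so that $L\alpha\log_b(1/\alpha)\ge sL\alpha$) to trap the target value $s-L\alpha\log_b(1/\alpha)$ between the endpoint values, and then obtain \eqref{gamma-bound}, \eqref{gamma-bound-2}, and \eqref{beta-bound} by discarding, respectively maximizing, the residual entropy term, exactly as in \eqref{entropy-expansion}. The only differences are cosmetic: your $\alpha_0$ is chosen abstractly rather than via the explicit formula \eqref{alpha-def} that Definition \ref{regime-measures} later quotes, you make explicit (what the paper leaves implicit) that ``unique'' means the unique solution with $\beta\le\gamma$, and the step you compress as ``feeding in the bounds on $A$ and $c$'' for the second inequality in \eqref{gamma-bound} is precisely the step the paper also asserts without detail.
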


\begin{proof} Assign $z:=\max(1,y)$ and $t:=\min(1,s)$. The function $x\mapsto x\log_b(1/x)$ is strictly increasing on $(0,1/e]$ and takes its maximum value $1/(e\ln(b))$ at $x=1/e$. Since $$b^{z}\geq z\ln(b)\geq \ln(b)\geq (t/e)e\ln(b),$$ there is a unique number $a\in(0,1/e]$ such that $b^za\log_b(1/a)=t/e$. Using the comparison $\ln(x)\leq (2/e)x^{1/2}$ for $x>0$, we see that $$a = \frac{t\ln(b)}{eb^z\ln(1/a)} \geq a^{1/2}\frac{ t\ln(b)}{2b^z}\quad\text{whence}\quad a \geq \left(\frac{t\ln(b)}{2b^z}\right)^{2}.$$ With partial foresight, we choose  \begin{equation}\label{alpha-def} \alpha_0:=\min\bigg(\Big(\frac{t\ln(b)}{2b^z}\Big)^{2}, \frac{1}{b^s}\bigg),\qquad\bigg[\text{note that } \alpha_0=\Big(\frac{t\ln(b)}{2b^z}\Big)^{2}\text{ when }s\leq y,\bigg] \end{equation}  which depends only on $b$, $y$, and $s$. Let $0\leq \alpha\leq \alpha_0$ be given. Using $\alpha\leq \alpha_0\leq \min(a,b^{-s})$ and the upper bound on $L$, we see that\begin{equation}\label{alpha-bounds} \begin{array}{c} L\alpha s\leq L\alpha\log_b(1/\alpha) \leq b^z a\log_b(1/a) = \min(1,s)/e, \vspace{.2cm}\\
\text{and}\quad L\alpha \leq t^2\ln(b)^2/4b\leq \min(1,s^2)/e^2.\end{array}\end{equation} To continue, we abbreviate $w:=1-L\alpha\in(0,1]$ and consider the function $$h(x):=(M-1)x\log_b\left(\frac{1}{x}\right)+(w-(M-1)x)\log_b\left(\frac{1}{w-(M-1)x}\right)\quad\text{on }(0,w/M].$$ Observe that $h$ is differentiable and strictly increasing on $(0,w/M]$, $h(0+)=0$, and $$h(w/M)=w \log_b(M/w)\geq w\log_b(M)\geq (1-L\alpha)s\geq s-L\alpha\log_b(1/\alpha)\geq s(1-1/e)$$ by the lower bound on $M$ and \eqref{alpha-bounds}. Thus, there exists some unique $\beta=\beta(\alpha,b,y,s,L,M)$ such that $h(\beta)=s-L\alpha\log_b(1/\alpha)$. Setting $\gamma:=w-(M-1)\beta$, we arrive at the desired conditions \eqref{abc-1} and \eqref{abc-2}.

To find lower bounds for $\beta$ and $\gamma$, set $\epsilon=(M-1)\beta$ and write \begin{equation}\begin{split}\label{entropy-expansion}
s-L\alpha\log_b(1/\alpha)&=\epsilon\log_b(M-1)
   +\underbrace{\epsilon\log_b\left(\frac{1}{\epsilon}\right)+(w-\epsilon)\log_b\left(\frac{1}{w-\epsilon}\right)}_{I(\epsilon)}\\
&\leq \epsilon\log_b(M-1)+w\log_b(2/w)\leq \epsilon\log_b(M-1)+2e^2\log_b(e^2).\end{split}\end{equation} To verify the first inequality above, simply check that the function $I(\epsilon)$ has a unique critical point at $\epsilon=w/2(M-1)$. To verify the second, check that $w\log_b(2/w)$ has a unique critical point at $w=2e^{-2}$. Since $I(\epsilon)\geq 0$, the first line in \eqref{entropy-expansion} and \eqref{alpha-bounds} yield: $$\gamma=1-L\alpha-\epsilon \geq 1-L\alpha-\frac{s-L\alpha\log_b(1/\alpha)}{\log_b(M-1)}\geq 1-\frac{\min(1,s^2)}{e^2}-\left(1-\frac{1}{e}\right)\frac{s}{\log_b(M-1)}.$$ This confirms \eqref{gamma-bound}. To check \eqref{gamma-bound-2}, note that $\gamma=w-(M-1)\beta\geq w/M$ (since $\beta\leq w/M$) and recall that $w=1-L\alpha\geq 1-t^2/e^2$. Finally, if $2e^2\log_b(e^2)\leq(\frac{1}{2}-\frac{1}{e})s$, then \eqref{entropy-expansion} and \eqref{alpha-bounds} yield \eqref{beta-bound}: \begin{equation*}\beta=\frac{\epsilon}{M-1}
\geq \frac{s-L\alpha\log_b(1/\alpha)-2e^2\log_b(e^2)}{(M-1)\log_b(M-1)}\geq \frac{s}{2(M-1)\log_b(M-1)}.\qedhere\end{equation*}
\end{proof}

\begin{figure}[t]\includegraphics[width=.44\textwidth]{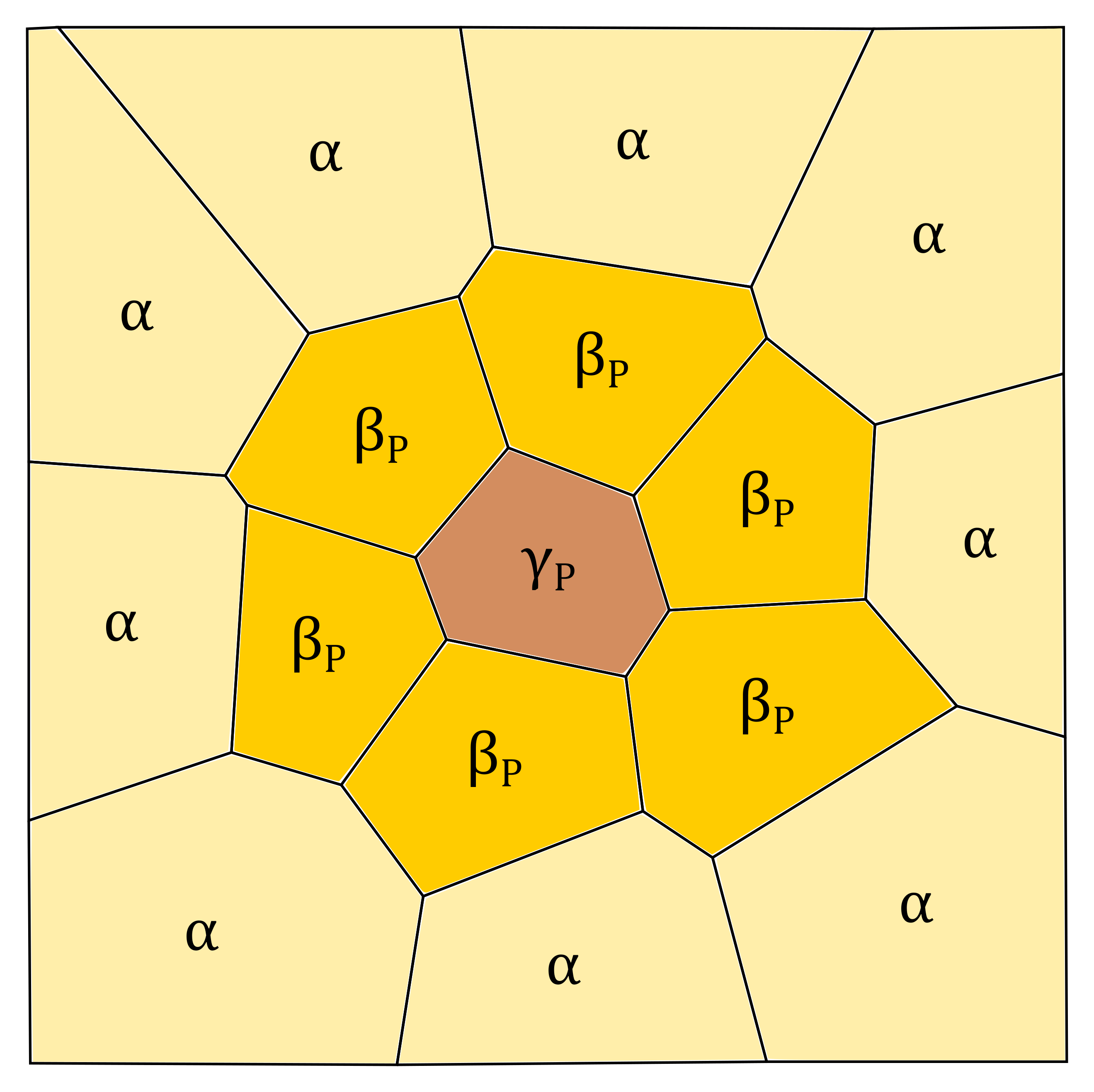}\hspace{.08\textwidth}\includegraphics[width=.44\textwidth]{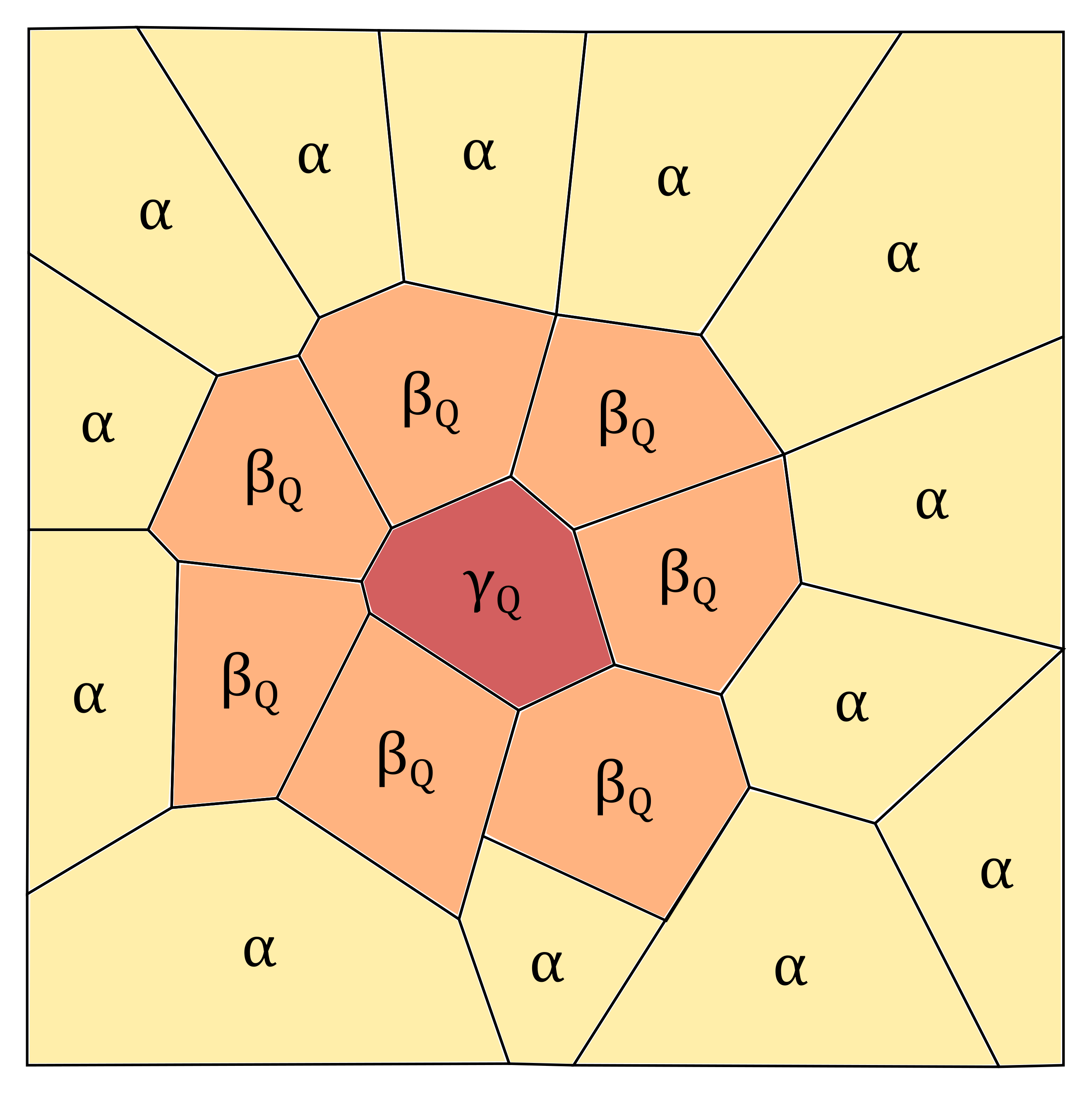}
\caption{On each metric cube $Q\in\Delta_k$ with $k\geq 0$, we try to force the quasi-Bernoulli measure $\muse$ to look locally $s_{k+1}$-dimensional by distributing the mass of the cube to its children so that the central child receives the most mass of any child. The number of children of a cube can fluctuate throughout the space. To get a doubling measure, we choose the weight $\alpha$ of children in $\Outer(Q)$ independently of $Q$. The weight $\beta_Q$ of children in $\Inner(Q)\setminus\{Q^\downarrow\}$ and weight $\gamma_Q$ of the central child $Q^\downarrow$ depend on $\alpha$, $q$, $s_{k+1}$, $L_Q=\#\Outer(Q)$, and $M_Q=\#\Inner(Q)$. In the graphic, we illustrate mass distributions for two cubes $P,Q\in\Delta_k$ with $\muse(P)=\muse(Q)=1$. On the left $L_P=9$ and $M_P=7$, while on the right $L_Q=13$ and $M_Q=8$.}\label{fig:qb} \end{figure}

\begin{definition}[quasi-Bernoulli measures] \label{regime-measures} Let $\XX$ be a complete Ahlfors $q$-regular metric space with $\diam\XX\geq 2.1$, let $\nu$ be a doubling measure on $\XX$, and let $\mathbf{s}=(s_k)_{k=1}^\infty$ be a sequence of positive numbers (``target dimensions'') such that \begin{equation}s_*:=\inf_{k\geq 1}s_k>0\quad\text{and}\quad s^*:=\sup_{k\geq 1} s_k<q.\end{equation} Let $(\Delta_k)_{k\in\ZZ}$ be a system of $b$-adic cubes for $\XX$ for some large $b\geq 47$. For all  $Q\in\Delta$, assign $L_Q:=\#\Outer(Q)$, $M_Q:=\#\Inner(Q)$, and $N_Q:=\#\Child(Q)$. We require that $b$ be large enough depending on at most $\XX$ and $s^*$ so that \begin{equation}\label{LM-bounds}M_Q\geq b^{s^*}\quad\text{and}\quad L_Q\leq N_Q\leq b^{q+1}\quad\text{for all $Q\in\Delta_+=\bigcup_{k=0}^\infty \Delta_k$.}\end{equation} (See Corollary \ref{c:good-count}.) Let $0<\alpha\leq \left(\frac12\min\{s_*,1\}\ln(b)b^{-(q+1)}\right)^2$ be a given weight; cf.~\eqref{alpha-def}. For all $k\geq 0$ and $Q\in\Delta_k$, we may use Lemma \ref{entropy-2} to define unique weights $$\beta_Q=\beta(\alpha,b,q+1,s_{k+1},L_Q,M_Q)\quad\text{and}\quad \gamma_Q=\gamma(\alpha,b,q+1,s_{k+1},L_Q,M_Q)$$ satisfying \begin{equation}\label{qb-weights} 1=L_Q\alpha+(M_Q-1)\beta_Q+\gamma_Q\quad\text{and}\quad h_{b,L_Q,M_Q}(\alpha,\beta_Q)=s_{k+1}.\end{equation} We specify a Radon measure $\muse$ on $\XX$ by specifying its values on cubes as follows: \begin{enumerate}
\item Declare $\muse(Q):=\nu(Q)$ for all $Q\in\Delta_0$.
\item For all $k\geq 0$ and $Q\in\Delta_k$, declare $\muse(R):=\alpha\muse(Q)$ for all $R\in\Outer(Q)$, declare $\muse(R):=\beta_Q\muse(Q)$ for all $R\in\Inner(Q)\setminus\{Q^\downarrow\}$, and declare $\muse(Q^{\downarrow}):=\gamma_Q\muse(Q)$.
\end{enumerate} We call $\muse$ a \emph{quasi-Bernoulli measure on} $\XX$ \emph{with target dimensions} $\mathbf{s}$, \emph{background measure} $\nu$, and \emph{outer weight} $\alpha$. (Of course, $\muse$ also depends on the choice of $b$ and $(\Delta_k)_{k\in\ZZ}$.)
\end{definition}

\begin{lemma}[existence and doubling] \label{qb-doubling} For any sequence $\mathbf{s}=(s_k)_{k\geq 1}$ of target dimensions, background measure $\nu$, and outer weight $\alpha$, the quasi-Bernoulli measure $\muse$ exists and \begin{equation}\label{support} 0<\muse(B(x,r))<\infty\quad\text{for all $x\in\XX$ and $r>0$}.\end{equation} If $b$ is large enough so that $2e^2\log_b(e^2)\leq s_*(\tfrac12-\tfrac{1}{e})$,  then $\muse$ is doubling and \begin{equation} \label{b-doubling} \muse(B(x,2r))\lesssim_{\nu,\alpha,b,s_*,q}\muse(B(x,r))\quad\text{for all $x\in\XX$ and $r>0$,}\end{equation} where the dependence on $\nu$ is on the doubling constant of $\nu$.\end{lemma}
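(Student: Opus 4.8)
\emph{Existence and \eqref{support}.} I would get existence from the extension criterion, Lemma~\ref{extension}, as packaged in Remark~\ref{practical}: a complete Ahlfors $q$-regular space is metrically doubling, hence proper, and $b\geq 47>5$, so the criterion applies once its two hypotheses are checked for the assignment in Definition~\ref{regime-measures}. Mass conservation $\muse(Q)=\sum_{R\in\Child(Q)}\muse(R)$ is exactly the first identity in \eqref{qb-weights}, and \eqref{gamma-bound-2} together with $M_Q\leq N_Q\leq b^{q+1}$ gives the central-child estimate $\muse(Q^\downarrow)=\gamma_Q\muse(Q)\geq(1-e^{-2})b^{-(q+1)}\muse(Q)$, so one may take $p=(1-e^{-2})b^{-(q+1)}$. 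This produces $\muse$ as a Radon measure with $\muse(Q)=\nu(Q)$ on $\Delta_0$ and $\muse(R)/\muse(Q)\in\{\alpha,\beta_Q,\gamma_Q\}$ along every parent--child step. For \eqref{support}, given $x\in\XX$ and $r>0$: the $\Delta_0$-cubes meeting $B(x,r)$ have pairwise disjoint inner balls of radius $c_b$ inside $B(x,r+2C_b)$, so only finitely many occur and $\muse(B(x,r))\leq\sum_{P\cap B(x,r)\neq\emptyset}\nu(P)<\infty$; for positivity, pick $k\geq0$ with $2C_bb^{-k}<r$, let $Q=Q_k(x)$, note $Q\subset B(x,2C_bb^{-k})\subset B(x,r)$ by roundness, and observe that $\muse(Q)$ is a product of positive weights times $\nu(\widehat{Q})\geq\nu(U_{\widehat{Q}})>0$, where $\widehat{Q}\in\Delta_0$ is the ancestor of $Q$.

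\emph{Doubling: set-up.} From now on assume $2e^2\log_b(e^2)\leq s_*(\tfrac12-\tfrac1e)$. The first step is a uniform lower bound on weight ratios: since $\alpha$ is fixed, $\beta_Q\geq\frac{s_{k+1}}{2(M_Q-1)\log_b(M_Q-1)}\geq\frac{s_*}{2(q+1)b^{q+1}}$ by \eqref{beta-bound} (its hypothesis holds because $s_{k+1}\geq s_*$), and $\gamma_Q\geq(1-e^{-2})b^{-(q+1)}$ by \eqref{gamma-bound-2}, so there is a constant
\[
\theta:=\min\Bigl\{\alpha,\ \tfrac{s_*}{2(q+1)b^{q+1}},\ (1-e^{-2})b^{-(q+1)}\Bigr\}\in(0,1)
\]
with $\theta\,\muse(Q)\leq\muse(R)\leq\muse(Q)$ for all $Q\in\Delta_+$, $R\in\Child(Q)$. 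I would then run the standard covering argument for $b$-adic weight constructions (cf.\ \cite[\S3]{KRS-cubes}): for given $x,r$ pick $k$ with $b^{-k}\approx r$, bound $\muse(B(x,r))$ below by $\muse(Q_k(x)^\downarrow)\geq\theta\,\muse(Q_k(x))$, bound $\muse(B(x,2r))$ above by the sum of $\muse(Q')$ over the finitely many $Q'\in\Delta_k$ meeting $B(x,2r)$, and glue these via a comparability estimate for nearby cubes of a common generation. The comparability estimate is the heart of the matter; the rest of the plan is routine.

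\emph{The comparability estimate.} The claim to prove is: with $A:=4C_bb$ (so $A\geq9$ because $b\geq47$), there is $C_1=C_1(\nu,\alpha,b,s_*,q)$ with $\muse(Q)\leq C_1\muse(Q')$ whenever $Q,Q'\in\Delta_k$, $k\geq0$, and $\gap(Q,Q')\leq Ab^{-k}$. The idea I would use is that any long descent toward such a pair hugs a cube boundary, and a step along a boundary multiplies $\muse$ by the \emph{fixed} outer weight $\alpha$ rather than a $Q$-dependent weight. Concretely, write $R_k=Q\subset R_{k-1}\subset\cdots$ for the ancestor chain of $Q$; if $R=R_m$ is an ancestor of $Q$ with $R\cap Q'=\emptyset$, then for each $j$ with $m\leq j\leq k-2$ monotonicity of $\gap$ and $\XX\setminus R_j\supset\XX\setminus R\supset Q'$ give
\[
\gap(R_{j+1},\XX\setminus R_j)\leq\gap(R_{j+1},\XX\setminus R)\leq\gap(Q,Q')\leq Ab^{-k}\leq9b^{-(j+1)}=9\,\side R_{j+1}
\]
(using $A/9<b$, so all generations $j\leq k-2$ are covered), hence $R_{j+1}\in\Outer(R_j)$ by the last assertion of Lemma~\ref{l:2AR-count} and $\muse(R_{j+1})=\alpha\,\muse(R_j)$; at most the bottom step $R_{k-1}\to Q$ of the descent fails this, and it still changes $\muse$ only by a factor in $[\theta,1]$. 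Applying this twice: if $Q,Q'$ lie in a common $\Delta_0$-cube, let $P\in\Delta_{k-l}$ (with $k-l\geq0$) be their smallest common ancestor and $R\neq R'$ its children above $Q,Q'$; then $\muse(Q)=u\,\alpha^a\muse(R)$ and $\muse(Q')=u'\,\alpha^a\muse(R')$ with $u,u'\in[\theta,1]$, $a=\max(0,l-2)$, and $\muse(R)/\muse(R')\in[\theta,\theta^{-1}]$ since $R,R'\in\Child(P)\subset\Delta_+$, so $\muse(Q)\leq\theta^{-2}\muse(Q')$. If $Q,Q'$ have distinct $\Delta_0$-ancestors $\widehat{Q}\neq\widehat{Q'}$, then $\widehat{Q}\subset\XX\setminus\widehat{Q'}$ and $\widehat{Q'}\subset\XX\setminus\widehat{Q}$, and the same estimate along $\widehat{Q}\downarrow Q$ and $\widehat{Q'}\downarrow Q'$ gives $\muse(Q)=u\,\alpha^{k-1}\nu(\widehat{Q})$, $\muse(Q')=u'\,\alpha^{k-1}\nu(\widehat{Q'})$, with $\nu(\widehat{Q})\lesssim_\nu\nu(\widehat{Q'})$ because these $\Delta_0$-cubes are within distance $A$ and $\nu$ is doubling. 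The hard part will be precisely this estimate: the chain of inequalities is short, but one must track the single ``uncontrolled'' bottom step in each descent and deal with smallest common ancestors at negative generations — which is why the split on $\Delta_0$-ancestors is used, and why choosing $b\geq47$ helps (it forces $A\geq9$, so no correction at the \emph{top} of a descent is ever required).

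\emph{Finishing.} For $r$ bounded below by a constant depending on $b$ and the Ahlfors data of $\XX$, I would obtain \eqref{b-doubling} directly from $\muse=\nu$ on $\Delta_0$, disjointness of $\Delta_0$-cubes, and doubling of $\nu$, by covering $B(x,2r)$ with $\Delta_0$-cubes (all lying in $B(x,3r)$) and comparing with a $\Delta_0$-cube inside $B(x,r)$. For small $r$, take $k\geq1$ with $b^{-k}\leq r/(2C_b)<b\,b^{-k}$ and $Q=Q_k(x)$; then $Q^\downarrow\subset B(x,2C_bb^{-k})\subset B(x,r)$, so $\muse(B(x,r))\geq\gamma_Q\muse(Q)\geq\theta\,\muse(Q)$, while every $Q'\in\Delta_k$ meeting $B(x,2r)$ has $\gap(Q',Q)\leq2r<Ab^{-k}$, hence $\muse(Q')\leq C_1\muse(Q)$ by the comparability estimate; counting such $Q'$ as in Lemma~\ref{l:AR-count} gives at most $K=K(\XX,b)$ of them, so
\[
\muse(B(x,2r))\leq\sum_{Q'\cap B(x,2r)\neq\emptyset}\muse(Q')\leq K\,C_1\,\muse(Q)\leq K\,C_1\,\theta^{-1}\,\muse(B(x,r)),
\]
which is \eqref{b-doubling} with the stated dependence of the constant.
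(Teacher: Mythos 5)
Your existence argument and your treatment of small radii are essentially the paper's own proof. For existence you invoke Lemma \ref{extension} via Remark \ref{practical} exactly as the paper does, and your uniform bound $\gamma_Q\geq(1-e^{-2})b^{-(q+1)}$ from \eqref{gamma-bound-2} together with \eqref{LM-bounds} is if anything cleaner than the paper's two-regime split between \eqref{gamma-bound} and \eqref{gamma-bound-2}. For $r\lesssim 1$ your comparability estimate is the same mechanism the paper uses: any step of a descent that stays within gap $\le 9\,\side$ of the relevant complement is forced into $\Outer$ by the last assertion of Lemma \ref{l:2AR-count}, hence carries the \emph{fixed} weight $\alpha$, and the two chains are matched near the top. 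The only difference is bookkeeping: you anchor at the deepest common ancestor and allow one uncontrolled bottom step with factor in $[\theta,1]$, while the paper works with cubes one level up, anchors at the least index where the ancestor chains differ, and uses $b$-adic doubling across the common parent; your checks ($A=4C_bb\le 9b$, exact cancellation of the power $\alpha^a$ on both chains, $P\in\Delta_+$, and the $\nu$-doubling comparison of distinct $\Delta_0$-ancestors) are all correct.

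The one step that would fail as written is the large-radius regime. Bounding $\muse(B(x,2r))\le\nu(B(x,3r))$ and then ``comparing with a $\Delta_0$-cube inside $B(x,r)$'' needs $\nu(B(x,3r))\lesssim\nu(Q_0(x))$ with a constant independent of $r$, which is false as $r\to\infty$ (already for an Ahlfors regular $\nu$ the ratio grows like $r^q$): iterating the doubling of $\nu$ from scale $r$ down to scale $1$ costs about $\log r$ doublings. The comparison has to be made between two objects at scale comparable to $r$. Either sum over all $\Delta_0$-cubes contained in $B(x,r)$ --- since $\muse=\nu$ on $\Delta_0$ and, for $r$ large, those cubes cover $B(x,r/2)$, this gives $\muse(B(x,r))\ge\nu(B(x,r/2))\gtrsim_\nu\nu(B(x,3r))\ge\muse(B(x,2r))$ --- or do what the paper does: for $2C_bb^{-k}\le r<2C_bb^{-(k-1)}$ with $k\le 0$, compare against the single cube $Q\in\Delta_k$ containing $x$ (on which $\muse=\nu$, since $\muse$ agrees with $\nu$ on every level $\le 0$), so that both sides are controlled by $\nu$-balls of radius $\sim b^{-k}\sim r$ and only boundedly many doublings of $\nu$ are used. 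With that repair (and a word to ensure your two regimes meet at $r\approx 2C_b$, where the single-cube comparison is harmless because the number of doublings is then absolutely bounded), your proof is complete and matches the paper's.
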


\begin{proof} \emph{Existence and uniqueness.} On the one hand, if $Q\in\Delta_+$ and $\log_b(M_Q-1)\geq 2s^*$, then $\muse(Q^\downarrow)=\gamma_Q\muse(Q)\geq (\frac{1}{2}-\frac{1}{e^2}+\frac{1}{2e})\muse(Q)$ by \eqref{gamma-bound}. On the other hand, if $\log_b(M_Q-1)\leq 2s^*$, then $M_Q\leq 1+b^{2s^*}$ and $\muse(Q^\downarrow)=\gamma_Q\muse(Q)\gtrsim_{b,s^*}\muse(Q)$ by \eqref{gamma-bound-2}. Therefore, by \eqref{qb-weights} and Remark \ref{practical}, the measure $\muse$ exists and is the unique Radon measure taking the indicated values on $\Delta_+$. Because $0<\muse(Q)<\infty$ for all $Q\in\Delta$, every ball in $\XX$ contains some cube in $\Delta$, and every ball is contained in some cube in $\Delta$ (see Remark \ref{r:exhaustive}), the measure $\muse$ has full support \eqref{support}.

\emph{Doubling on large radii.} On balls with large radii, $\muse$ inherits the doubling property from $\nu$. Let $x\in\XX$ and suppose that $2C_b b^{-k}\leq r<2C_b b^{-(k-1)}$ for some integer $k\leq 0$. On the one hand, if we let $Q\in \Delta_k$ be the unique cube of side length $b^{-k}$ containing $x$, then $$\muse(B(x,r))\geq \muse(Q)=\nu(Q)\geq \nu(U_Q)\geq \nu(B(x_Q,\tfrac{1}{2}c_b b^{-k})).$$ On the other hand, let $\mathcal{R}=\{R\in\Delta_0:R\cap B(x,2r)\neq\emptyset\}$. For any $R\in\mathcal{R}$ and $y\in R$, $$|y-x_Q|\leq |y-x|+|x-x_Q|\leq (\diam R+2r)+r\leq 7C_b b^{-(k-1)}=(14bC_b/c_b)\tfrac12c_b b^{-k}.$$ Hence $$\muse(B(x,2r))\leq \sum_{R\in\mathcal{R}} \muse(R) = \sum_{R\in\mathcal{R}} \nu(R) \leq \nu(B(x_Q,(14b C_b/c_b) \tfrac{1}{2}c_bb^{-k})).$$ It follows that $$\muse(B(x,2r)) \leq \nu(B(x_Q,(14b C_b/c_b) \tfrac{1}{2}c_bb^{-k})) \lesssim_{\nu,b} \nu(B(x_Q,\tfrac{1}{2}c_b b^{-k}))\leq \muse(B(x,r)),$$ where the implicit constant depends only on the doubling constant for $\nu$ and $b$.

\emph{$b$-adic doubling.} Assume that $2e^2\log_b(e^2)\leq s_*(\tfrac12-\tfrac{1}{e})$. Then $\muse$ is $b$-adic doubling in the sense that \begin{equation}\label{b-adic-doubling}\muse(R)\gtrsim_{\nu,\alpha,b,s_*,q}\muse(Q)\quad\text{for all $Q\in\Delta$ and $R\in\Child(Q)$}.\end{equation} There are two regimes. First, if $\side Q=b^{-k}$ for some $k\leq -1$, then $$\muse(R)=\nu(R)\gtrsim_{\nu,b}\nu(Q)=\muse(Q)$$ by an argument similar to the one in the previous paragraph, since $\nu$ is doubling. Second, if $\side Q=b^{-k}$ for some $k\geq 0$, then either $\muse(R)=\alpha\muse(Q)$ or $\muse(R)=\beta_Q\muse(Q)$ or $\muse(R)=\gamma_Q\muse(Q)$, so it suffices to find a common lower bound for all of the weights. In the existence paragraph, we already showed that $\gamma_Q\gtrsim_{b,s^*}1$. Since $2e^2\log_b(e^2)\leq s_*(\tfrac12-\tfrac{1}{e})\leq s_{k+1}(\tfrac12-\tfrac{1}{e})$, we also have $\beta_Q\gtrsim_{b,s_*,q}1$ by \eqref{beta-bound} and \eqref{LM-bounds}. Finally, the weight $\alpha>0$ by definition. Therefore, $\muse$ is $b$-adic doubling.

\emph{Doubling on small radii.} The key point is that the weights $\alpha$ attached to children $R$ of $Q\in\Delta_k$ ($k\geq 0$) that lie near $\partial Q$ do not depend on $Q$. This idea is already present in \cite[\S3]{KRS-cubes} and \cite[\S8]{BLZ}. We continue to assume that $2e^2\log_b(e^2)\leq s_*(\tfrac12-\tfrac{1}{e})$.

Let $x\in\XX$ and suppose that $2C_b b^{-k}\leq r< 2C_b b^{-(k-1)}$ for some integer $k\geq 1$. Let $Q$ be the unique cube of side length $b^{-k}$ containing $x$ and let $\mathcal{S}=\{R\in\Delta_{k-1}:R\cap B(x,2r)\neq\emptyset\}$. It is easy to see using the doubling property of $\nu$ that $\#\mathcal{S}\lesssim_{\nu,b} 1$ (cf.~proof of Lemma \ref{l:AR-count}). Thus, if we can show that $\muse(R)\lesssim_{\nu,\alpha,b,s_*,q} \muse(S)$ for all $R,S\in\mathcal{S}$, then $$\muse(B(x,2r))\leq \sum_{R\in\mathcal{S}} \muse(R) \lesssim_{\nu,b,s_*,q}\muse(Q^\uparrow) \lesssim_{\nu,\alpha,b,s_*,q}\muse(Q)\leq  \muse(B(x,r)).$$ Fix $R,S\in\mathcal{S}$ with $R\neq S$. Since $R$ and $S$ both intersect $B(x,2r)$, we have $\gap(R,S)\leq \diam B(x,2r)\leq 8C_b\, b^{-(k-1)}< 9\side R$, since $b\geq 47$ implies $C_b\leq 47/46$. To proceed, let us assign labels to the ancestors of $R$ and $S$: for all $0\leq j\leq k-1$, let $Q^R_j$ and $Q^S_j$ be unique cubes in $\Delta_j$ containing $R$ and $S$, respectively. Note that $Q^R_{k-1}=R$ and $Q^S_{k-1}=S$. Let $i\geq 0$ be the least index such that $Q^R_i\neq Q^S_i$. If $i=0$, then $\muse(Q^R_0)=\nu(Q^R_0)$, $\muse(Q^S_0)=\nu(Q^S_0)$, and $\gap(Q^R_0,Q^S_0)\leq \gap(R,S)<9$, so \begin{equation}\label{0-doubling}\muse(Q^R_0)\lesssim_\nu \muse(Q^S_0)\lesssim_{\nu}\muse(Q^R_0)\end{equation} by the doubling of $\nu$. If $i\geq 1$, then $(Q^R_i)^\uparrow = Q^R_{i-1}=Q^S_{i-1}=(Q^S_i)^\uparrow$. Hence \begin{equation}\label{1-doubling}
\muse(Q^R_i)\leq \muse(Q^R_{i-1})=\muse(Q^S_{i-1})\lesssim_{\nu,\alpha,b,s_*,q}\muse(Q^S_{i})\end{equation} by \eqref{b-adic-doubling}. The same argument shows $\muse(Q^S_i)\lesssim_{\nu,\alpha,b,s_*,q}\muse(Q^R_i)$. For all $i+1\leq j\leq k-1$, we have $\gap(Q^R_j,\XX \setminus Q^R_{j-1})\leq \gap(R,S)<9 \side R  \leq  9\side Q^R_j$, whence $Q^R_j\in\Outer(Q^R_{j-1})$ by Lemma \ref{l:2AR-count}. The same is true if we swap the role of $R$ and $S$. Thus, $\muse(Q^R_j)=\alpha\muse(Q^R_{j-1})$ and $\muse(Q^S_j)=\alpha\muse(Q^S_{j-1})$ for all $i+1\leq j\leq k-1$. It follows that \begin{equation} \muse(R)=\alpha^{k-1-i} \muse(Q^R_i) \lesssim_{\nu,\alpha,b,s_*,q} \alpha^{k-1-i} \muse(Q^S_i)=\muse(S).\end{equation} Similarly, $\muse(S)\lesssim_{\nu,b,s_*,q}\muse(R)$. Therefore, $\muse$ is a doubling measure.
\end{proof}

\begin{lemma}[dimension]\label{qb-dimension} If $\muse$ is $b$-adic doubling (e.g.~if $2e^2\log_b(e^2)\leq s_*(\tfrac12-\tfrac{1}{e})$), then \begin{equation}\label{qb-dims} \underline{\dim}_H\,\muse=\overline{\dim}_H\,\muse=\liminf_{k\rightarrow\infty} \frac{1}{k}\sum_{j=1}^k s_j,\quad\underline{\dim}_P\,\muse=\overline{\dim}_P\,\muse=\limsup_{k\rightarrow\infty}\frac{1}{k}\sum_{j=1}^k s_j.\end{equation}\end{lemma}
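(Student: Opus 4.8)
The plan is to express $\log_b\bigl(1/\muse(Q_k(x))\bigr)$ as a sum of $k$ uniformly bounded random variables whose conditional expectation at step $j$ is forced to equal the target dimension $s_j$ by the entropy normalization \eqref{qb-weights}, and then to apply a strong law of large numbers for martingales. First I would reduce to the cube filtration. Because $\XX$ is Ahlfors $q$-regular it is a doubling metric space, and $\muse$ is Radon (hence finite on bounded sets), so the $b$-adic reformulation \eqref{dim5}--\eqref{dim6} of local dimensions applies: at $\muse$-a.e.\ $x$, writing $Q_k(x)$ for the cube of $\Delta_k$ containing $x$, the lower and upper local dimensions of $\muse$ at $x$ equal $\liminf_{k\to\infty}\log\muse(Q_k(x))/\log b^{-k}$ and $\limsup_{k\to\infty}\log\muse(Q_k(x))/\log b^{-k}$. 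Combining this with Theorem \ref{metric-local-dim} --- applied to $\muse$ restricted to each $\overline{Q_0}$, $Q_0\in\Delta_0$ (compact and of finite $\muse$-mass, as $\XX$ is proper), then reassembled over the countable partition $\Delta_0$ using $\muse(\partial Q_0)=0$ --- it suffices to show that, $\muse$-a.e.,
$$\lim_{k\to\infty}\left(\frac{\log\muse(Q_k(x))}{\log b^{-k}}-\frac1k\sum_{j=1}^k s_j\right)=0.$$
Taking $\liminf$ and $\limsup$ over $k$ then makes the lower and upper local dimensions $\muse$-a.e.\ equal to the two constants on the right of \eqref{qb-dims}, so the $\essinf$ and $\esssup$ in \eqref{dim1}--\eqref{dim4} are vacuous and all four dimensions of $\muse$ collapse to those constants.

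For the displayed limit, fix $Q_0\in\Delta_0$, set $\PP:=\muse(Q_0)^{-1}\,\muse\res\overline{Q_0}$, and let $\mathcal F_k$ be the finite $\sigma$-algebra generated by $x\mapsto Q_k(x)$. Telescoping Definition \ref{regime-measures}, $\log\muse(Q_k(x))=\log\muse(Q_0)+\sum_{j=1}^k\log w_j$, where $w_j:=\muse(Q_j(x))/\muse(Q_{j-1}(x))$ is one of $\alpha$, $\beta_{Q_{j-1}(x)}$, $\gamma_{Q_{j-1}(x)}$; put $X_j:=\log_b(1/w_j)$, so that $\log\muse(Q_k(x))/\log b^{-k}=\log\muse(Q_0)/(-k\log b)+\tfrac1k\sum_{j=1}^k X_j$. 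Given $Q_{j-1}(x)=Q\in\Delta_{j-1}$, exactly $L_Q=\#\Outer(Q)$ children of $Q$ carry $\muse$-mass fraction $\alpha$, exactly $M_Q-1$ carry fraction $\beta_Q$, and the central child carries fraction $\gamma_Q$; since these fractions sum to $1$ by the first identity in \eqref{qb-weights}, the conditional law of $Q_j(x)$ given $\mathcal F_{j-1}$ assigns each child $R$ the probability $\muse(R)/\muse(Q)$, whence
$$\EE[X_j\mid\mathcal F_{j-1}]=L_Q\alpha\log_b\tfrac1\alpha+(M_Q-1)\beta_Q\log_b\tfrac1{\beta_Q}+\gamma_Q\log_b\tfrac1{\gamma_Q}=h_{b,L_Q,M_Q}(\alpha,\beta_Q)=s_j$$
by the second identity in \eqref{qb-weights}. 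The essential point is that this equals the \emph{deterministic} number $s_j$, independent of $Q$, so $M_k:=\sum_{j=1}^k(X_j-s_j)$, with $M_0:=0$, is an $(\mathcal F_k)$-martingale.

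It remains to bound the increments and apply a strong law. Since $Q_j(x)\subset Q_{j-1}(x)$ we have $w_j\le1$, and the $b$-adic doubling property \eqref{b-adic-doubling} (valid under the hypothesis of the lemma) provides a constant $c\in(0,1)$, depending only on $\nu,\alpha,b,s_*,q$, with $w_j\ge c$; hence $0\le X_j\le\log_b(1/c)$ uniformly in $j$ and $x$, and $|X_j-s_j|$ is bounded by a fixed constant. A strong law for martingales with uniformly bounded increments --- e.g.\ because $\sum_{k\ge1}k^{-2}\EE[(M_k-M_{k-1})^2]<\infty$, or by Azuma--Hoeffding together with Borel--Cantelli along $k=n^2$ and interpolation over the gaps --- gives $M_k/k\to0$ $\PP$-a.s. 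As $\log\muse(Q_0)/(-k\log b)\to0$, this yields the displayed limit for $\muse$-a.e.\ $x\in Q_0$; since $\Delta_0$ is countable and $\XX=\bigcup_{Q_0\in\Delta_0}Q_0$, the limit holds $\muse$-a.e.\ on $\XX$, which completes the proof. I expect the only delicate point to be the identification $\EE[X_j\mid\mathcal F_{j-1}]=s_j$ --- i.e.\ checking that the three weights $\alpha,\beta_Q,\gamma_Q$ reproduce precisely the Shannon entropy of the splitting of $Q$ into children, which is exactly what \eqref{qb-weights} is engineered to guarantee; everything downstream (the martingale strong law, the passage from balls to cubes, and Tamashiro's formulas) is routine.
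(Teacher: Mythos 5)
Your proof is correct and takes essentially the same route as the paper: the same increments $X_j=\log_b\bigl(\muse(Q_{j-1}(x))/\muse(Q_j(x))\bigr)$ over the cube filtration, the same entropy identity from \eqref{qb-weights} forcing the (conditional) mean to be $s_j$, uniform boundedness of the increments via $b$-adic doubling, a strong law of large numbers, and the reduction to cubes via Theorem \ref{metric-local-dim} and \eqref{dim5}--\eqref{dim6}. The only cosmetic difference is that you verify the martingale property through conditional expectations and invoke a martingale strong law (or Azuma plus Borel--Cantelli), whereas the paper checks that the $X_j$ are pairwise uncorrelated, i.e.\ $\EE(X_iX_j)=s_is_j$, and cites the strong law for uncorrelated $L^2$-bounded variables; both hinge on exactly the same computation.
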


\begin{proof} To prove \eqref{qb-dims}, we may fix $Q_0\in\Delta_0$ and show that (see \S\ref{ss:dimension}) $$\liminf_{k\rightarrow\infty} \frac{\log\muse(Q_k(x))}{\log(b^{-k})}=\liminf_{k\rightarrow\infty} \frac{1}{k}\sum_{j=1}^k s_j,\quad \limsup_{k\rightarrow\infty} \frac{\log\muse(Q_k(x))}{\log(b^{-k})}=\limsup_{k\rightarrow\infty} \frac{1}{k}\sum_{j=1}^k s_j$$ at $\muse$-a.e.~$x\in Q_0$, where $Q_k(x)\in\Delta_k$ is the cube containing $x$. For convenience, so that we may adopt the probabilistic viewpoint, we temporarily rescale $\muse$ so that $\muse(Q_0)=1$. Let $\EE$ denote the expectation with respect to the probability measure $\PP:=\muse\res Q_0$. Following the same plan as in \cite[\S3]{dimension-of-measures}, for each $k\geq 1$ we define a random variable $X_k$ on $Q_0$ by \begin{equation*}X_k(x)=-\log_b \muse(Q_k(x)) + \log_b \muse(Q_{k-1}(x)).\end{equation*} Note that $|X_k(x)|\lesssim_{\muse} 1$ for all $x\in Q_0$, since $\muse$ is $b$-adic doubling. The relevance of these random variables for the problem at hand is that \begin{equation}\label{S-average}\frac{S_k(x)}{k}:=\frac{X_1(x)+\cdots+X_k(x)}{k}=\frac{\log \muse(Q_k(x))}{\log (b^{-k})}\quad\text{for all }x\in Q_0,\end{equation} since $\log_b(\muse (Q_0(x))=\log_b(\muse(Q_0))=0$ by our assumption that $\muse(Q_0)=1$.

We claim that the random variables $X_k$ are uncorrelated, i.e.~$\EE(X_iX_j)=\EE(X_i)\EE(X_j)$ for all $i\neq j$. For each $j\geq 1$, $X_j$ is constant on each cube $R\in\Child^j(Q_0)$ and the value $X_j(R)$ that it takes can be computed knowing only type of child that $R$ is: \begin{equation}X_j(R)=\left\{\begin{array}{ll}\log_b(1/\alpha) &\text{when } R\in\Outer(R^\uparrow),\\ \log_b(1/\beta_Q) &\text{when } R\in\Inner(R^\uparrow)\setminus\{R^{\uparrow\downarrow}\},\\ \log_b(1/\gamma_Q) &\text{when } R=R^{\uparrow\downarrow}.\end{array}\right.\end{equation} Thus, by \eqref{abc-2} and \eqref{qb-weights}, we obtain \begin{equation}\label{X-expectation} \EE(X_j)= \!\!\!\sum_{Q\in\Child^{j-1}(Q_0)} \sum_{R\in\Child(Q)} \!\!\! X_j(R)\mu(R) =\!\!\!\sum_{Q\in\Child^{j-1}(Q_0)} \!\!\! h_{b,L_Q,M_Q}(\alpha,\beta_Q)\,\muse(Q)=s_j.\end{equation}  A similar computation shows that $\EE(X_iX_j)=s_is_j$ when $1\leq i<j$. Indeed, since $X_i$ takes constant values on each cube in $\Delta_i$ and $X_j$ takes constant values on each cube in $\Delta_j$, and $j>i$, we witness that \begin{equation}\begin{split}
\EE(X_iX_j)&=\sum_{P\in\Child^{i-1}(Q_0)}\sum_{Q\in\Child(P)}\sum_{R\in\Child^{j-1-i}(Q)}\sum_{S\in\Child(R)} X_i(Q)X_j(S)\muse(S)\\
&=\sum_{P\in\Child^{i-1}(Q_0)}\sum_{Q\in\Child(P)}X_i(Q)\sum_{R\in\Child^{j-1-i}(Q)} h_{b,L_R,M_R}(\alpha,\beta_R) \muse(R)\\
&=\sum_{P\in\Child^{i-1}(Q_0)}\sum_{Q\in\Child(P)}X_i(Q)\muse(Q)s_j\\
&=s_is_j.
\end{split}\end{equation}

Because the random variables $X_k$ are uncorrelated and are uniformly bounded in $L^2$ (as they are uniformly bounded in $L^\infty$), the strong law of large numbers (see e.g.~\cite[Theorem 5.1.2]{Chung-probability}) ensures that \begin{equation}\label{LLN} \lim_{k\rightarrow\infty} \frac{S_k-\EE(S_k)}{k}=0\quad\text{at $\muse$-a.e.~$x\in Q_0$}.\end{equation} Combining \eqref{S-average}, \eqref{X-expectation}, and \eqref{LLN}, we conclude that at $\muse$-a.e.~$x\in Q_0$, \begin{align*}
\liminf_{k\rightarrow\infty} \frac{\log \muse(Q_k(x))}{\log (b^{-k})}
   &=\liminf_{k\rightarrow\infty} \frac{1}{k}\sum_{j=1}^k \EE(X_j)
    = \liminf_{k\rightarrow\infty} \frac{1}{k}\sum_{j=1}^\infty s_j\quad\text{and}\\
\limsup_{k\rightarrow\infty} \frac{\log \muse(Q_k(x))}{\log (b^{-k})}
   &=\limsup_{k\rightarrow\infty} \frac{1}{k}\sum_{j=1}^k \EE(X_j)
    = \limsup_{k\rightarrow\infty} \frac{1}{k}\sum_{j=1}^\infty s_j.\qedhere\end{align*}
\end{proof}

It is now an easy matter to note the existence of doubling measures with prescribed Hausdorff and packing dimensions; see \cite{Wu-dimension}, \cite{Bylund-Gudayol}, \cite{KRS-cubes}, \cite{no-AR-subsets} for related results on complete doubling metric spaces, which always support a doubling measure \cite{doubling-exists}, \cite{VK-doubling}. While the authors would not be surprised to find that Theorem \ref{t:prescribe} is already known, they could not find a reference for it in the literature.

\begin{theorem}\label{t:prescribe} Let $\XX$ be a complete Ahlfors $q$-regular metric space. For any four numbers $d_*$, $d^*$, $D_*$, $D^*$ satisfying $0<d_*\leq d^*\leq q$, $0<D_*\leq D^*\leq q$, $d_*\leq D_*$, and $d^*\leq D^*$, and $d_*=q\Leftrightarrow D_*=q$ and $d^*=q\Leftrightarrow D^*=q$, there is a doubling measure $\mu$ on $\XX$ such that \begin{equation}\label{haus-pack-dims}\underline{\dim}_H\,\mu=d_*,\quad \overline{\dim}_H\,\mu=d^*,\quad \underline{\dim}_P\,\mu=D_*,\quad \overline{\dim}_{P}\,\mu=D^*.\end{equation}\end{theorem}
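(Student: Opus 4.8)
The plan is to treat two regimes. If $d_*=q$, the stated compatibility conditions force $d^*=D_*=D^*=q$, and one simply takes $\mu$ to be any Ahlfors $q$-regular measure on $\XX$: it is doubling, and since $\mu(B(x,r))\asymp r^q$ for every $x$, all four local dimensions in Theorem~\ref{metric-local-dim} equal $q$, which is \eqref{haus-pack-dims}. So assume $d_*<q$; the hypotheses then give $D_*<q$, and either $d^*<q$ and $D^*<q$, or $d^*=D^*=q$.

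The main construction is a quasi-Bernoulli measure with \emph{cube-dependent} target dimensions. After rescaling the metric if necessary so that $\diam\XX\geq 2.1$, fix a large $b\geq 47$, a system $(\Delta_k)_{k\in\ZZ}$ of $b$-adic cubes for $\XX$ with origin $o$, a background Ahlfors $q$-regular measure $\nu$, and a small outer weight $\alpha$, as in Definition~\ref{regime-measures}. Fix one distinguished cube $R_1\in\Delta_+$ (lying in some $Q_0\in\Delta_0$), and call a cube $Q\in\Delta_+$ of \emph{type $2$} if it is a descendant of $R_1$ and of \emph{type $1$} otherwise. I would build $\mu$ by the recipe of Definition~\ref{regime-measures}, modified so that the target dimension used to split $Q$ is $s^{(2)}_j$ if $Q$ is the $j$-th generation descendant of $R_1$, and $s^{(1)}_k$ if $Q\in\Delta_k$ is type $1$; here $\mathbf s^{(1)},\mathbf s^{(2)}$ are sequences with values in a fixed interval $[d_*,\sigma]\subset(0,q)$, where $\sigma:=\max(D_*,D^*)<q$ (the case $D^*=q$ is treated in the last paragraph). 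Because the outer weight $\alpha$ is still the same for every cube, and by \eqref{gamma-bound} and \eqref{beta-bound} together with \eqref{LM-bounds} (applied with $s_*:=d_*$, $s^*:=\sigma$, and $b$ large) the weights $\beta_Q,\gamma_Q$ satisfy the uniform lower bounds used in the proof of Lemma~\ref{qb-doubling}, that proof applies essentially verbatim and $\mu$ is a doubling measure satisfying \eqref{doubling}.

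Next I would adapt Lemma~\ref{qb-dimension}. After rescaling so $\mu(Q_0)=1$, the increments $X_k=-\log_b\mu(Q_k(\cdot))+\log_b\mu(Q_{k-1}(\cdot))$ are uniformly bounded and satisfy $\tfrac1k(X_1+\cdots+X_k)=\log\mu(Q_k(\cdot))/\log b^{-k}$ on $Q_0$. The point is that $\{x\in R_1\}$ and $\{x\in Q_0\setminus R_1\}$ are measurable, and conditioned on either one the conditional expectation of $X_k$ given the cube $Q_{k-1}(x)$ equals, for $k$ large, a \emph{deterministic} function of $k$ — the running value of $\mathbf s^{(2)}$ on $\{x\in R_1\}$ and of $\mathbf s^{(1)}$ on $\{x\in Q_0\setminus R_1\}$ — because on any cube $Q$ one has $\EE(X_k\mid Q_{k-1}=Q)=h_{b,L_Q,M_Q}(\alpha,\beta_Q)$, the target of $Q$. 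Hence the $X_k$ are uncorrelated on each of these two events, the strong law of large numbers applies on each, and by the cube form of local dimension (\eqref{dim5}--\eqref{dim6}) the lower and upper local dimensions of $\mu$ at $\mu$-a.e.\ $x\in R_1$ equal $\liminf_k\tfrac1k\sum_{j\leq k}s^{(2)}_j$ and $\limsup_k\tfrac1k\sum_{j\leq k}s^{(2)}_j$, while at $\mu$-a.e.\ $x\notin R_1$ they equal the corresponding liminf and limsup for $\mathbf s^{(1)}$. Now choose $\mathbf s^{(1)}$ so that these Ces\`aro averages have liminf $d_*$ and limsup $D_*$, and $\mathbf s^{(2)}$ so that they have liminf $d^*$ and limsup $D^*$; such sequences exist by an elementary argument (alternate long runs near the two target values, with run lengths growing fast enough that each run dominates the average), and both target pairs lie in $[d_*,\sigma]$ because $d_*\leq D_*$ and $d^*\leq D^*$. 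Since both $R_1$ and its complement have positive $\mu$-measure (full support), Theorem~\ref{metric-local-dim} gives $\underline{\dim}_H\mu=\min(d_*,d^*)=d_*$, $\overline{\dim}_H\mu=\max(d_*,d^*)=d^*$, $\underline{\dim}_P\mu=\min(D_*,D^*)=D_*$, and $\overline{\dim}_P\mu=\max(D_*,D^*)=D^*$, i.e.\ \eqref{haus-pack-dims}.

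There remains the case $d^*=D^*=q$, which I expect to be the main obstacle. The quasi-Bernoulli rule always produces targets — hence local dimensions — strictly below $q$, so the type $2$ region cannot be made $q$-dimensional this way; I would instead replace the subdivision rule inside $R_1$ by setting $\mu\res\overline{R_1}$ equal to a scalar multiple of $\nu$ restricted to an open ball $U(x_{R_1},\rho_1)\subset R_1$, with $\rho_1$ chosen so that $\nu$ charges neither the sphere $\{y:|y-x_{R_1}|=\rho_1\}$ nor $\partial R_1$ (only countably many radii fail the first, and the second can be arranged by an appropriate choice of the cube system). Then $\nu$-a.e.\ point of that ball lies in its open interior, so there $\mu(B(x,r))=\nu(B(x,r))\asymp r^q$ for small $r$, giving local profile $(q,q)$; the rest of $\XX$ is handled exactly as above with $\sigma:=D_*<q$, and the conclusion follows. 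The delicate point is verifying that $\mu$ remains doubling across the interface of $R_1$ with its surrounding cubes — the one place the uniform ``outer weight'' mechanism of Lemma~\ref{qb-doubling} does not apply directly — which requires arranging that $\mu$ coincides with $\nu$ in a neighbourhood of $\partial R_1$ (by not subdividing there and choosing the ancestry of $R_1$ suitably), so that the doubling property near $\partial R_1$ is inherited from $\nu$.
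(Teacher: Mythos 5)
Your main-case construction (the regime $D_*\leq D^*<q$) is essentially sound, but it takes a genuinely different and heavier route than the paper. You build a single measure with cube-dependent targets (one sequence inside a distinguished cube $R_1$, another outside) and re-run Lemmas \ref{qb-doubling} and \ref{qb-dimension} region by region; this does work, because those proofs only use that every cube is split with the same outer weight $\alpha$ and with targets confined to a fixed interval $[s_*,s^*]\subset(0,q)$, together with your observation that for large $k$ the cube containing a point of $Q_0\setminus R_1$ is disjoint from $R_1$, so the relevant conditional expectations are deterministic and the strong law applies on each region separately. The paper reaches the same conclusion with no new construction at all: a sum of two measures satisfying \eqref{doubling} (hence of full support) again satisfies \eqref{doubling}, and the lower (resp.\ upper) Hausdorff and packing dimensions of a sum are the minima (resp.\ maxima) of those of the summands. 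So it suffices to produce, for each admissible pair $(d,D)$ with $d=q\Leftrightarrow D=q$, one doubling measure of exact Hausdorff dimension $d$ and exact packing dimension $D$ --- a quasi-Bernoulli measure $\muse$ with targets oscillating between $d$ and $D$ when $d\leq D<q$, and any Ahlfors $q$-regular measure when $d=D=q$ --- and then add the pieces corresponding to $(d_*,D_*)$ and $(d^*,D^*)$.

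The genuine gap is your final case $d^*=D^*=q$. Setting $\mu\res\overline{R_1}=c\,\nu\res U(x_{R_1},\rho_1)$ with $U(x_{R_1},\rho_1)$ strictly inside $R_1$ gives $\mu$ zero mass on a nonempty open subset of $R_1$, so \eqref{doubling} fails outright (the paper's doubling condition forces full support); and your proposed repair --- making $\mu$ coincide with $\nu$ in a neighbourhood of $\partial R_1$ by ``not subdividing there'' --- is inconsistent with that restriction and only relocates the obstruction that you flag but do not resolve. The obstruction is real: wherever a region with $\mu\asymp\nu$ (ball mass $\asymp r^q$) abuts a region governed by the quasi-Bernoulli rule, take $x$ just outside $R_1$ at distance $\approx r$ from it; all but boundedly many of the ancestors of $Q_k(x)$ (with $b^{-k}\approx r$) between the scale of $R_1$ and scale $r$ lie in $\Outer$ of their parents by Lemma \ref{l:2AR-count}, so $\mu(B(x,r))\lesssim_{R_1}\alpha^{k-k_1}$, i.e.\ it decays like $r^{\log_b(1/\alpha)}$ with $\log_b(1/\alpha)>2q$, while $\mu(B(x,2r))\gtrsim c\,r^q$ from the $\nu$-part (already so for $\XX=\RR^d$ with triadic cubes). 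The ratio blows up as $r\downarrow 0$, and no choice of $c$, of $\rho_1$, or of the ancestry of $R_1$ repairs a mismatch of power laws across a side-by-side interface. The missing idea is precisely the paper's summation trick: realize the $(q,q)$ profile by an Ahlfors $q$-regular measure on all of $\XX$ and add it to a quasi-Bernoulli measure of exact dimensions $(d_*,D_*)$; then there is no interface, doubling is immediate, and \eqref{haus-pack-dims} follows from the min/max behaviour of dimensions under sums. The same trick would also have disposed of your main case without the two-region machinery.
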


\begin{proof} If $\diam \XX=0$, the conclusion is trivial. Thus, after scaling the metric if necessary, we may suppose that $\diam\XX\geq 2.1$. Because the sum of two doubling measures is again a doubling measure, it suffices to prove that for any $0<d\leq D\leq q$ with $d=q\Leftrightarrow D=q$, there exists a doubling measure $\mu$ on $\XX$ such that $\underline{\dim}_H\,\mu=\overline{\dim}_H\,\mu=d$ and $\underline{\dim}_P\,\mu=\overline{\dim}_P\,\mu=D$. If $d=D=q$, then we may simply take $\mu$ to be any Ahlfors $q$-regular measure on $\XX$. Otherwise, $0<d\leq D<q$. Choose any sequence $\mathbf{s}=(s_k)_{k=1}^\infty$ taking values in $\{d,D\}$ such that $\liminf_{k\rightarrow\infty} k^{-1}(s_1+\cdots+s_k)=d$ and $\limsup_{k\rightarrow\infty} k^{-1}(s_1+\cdots+s_k)=D$. Let $\mu=\muse$ be a quasi-Bernoulli measure with target dimensions $\mathbf{s}$, built using a system of $b$-adic cubes with scaling factor $b$ satisfying $2e^2\log_b(e^2)\leq s_*(\tfrac12-\tfrac{1}{e})=d(\tfrac{1}{2}-\tfrac{1}{e})$. Then $\mu$ exists and is doubling by Lemma \ref{qb-doubling} and has Hausdorff dimension $d$ and packing dimension $D$ by Lemma \ref{qb-dimension}.\end{proof}

Because the construction of the quasi-Bernoulli measure $\muse$ demands that we take $b\uparrow\infty$ as $s^*\uparrow q$ (see Corollary \ref{c:good-count}), we do not currently know how to build a doubling measure $\mu$ on a complete Ahlfors $q$-regular space such that $\underline{\dim}_H\,\mu=\overline{\dim}_H\,\mu<q=\underline{\dim}_P\,\mu=\overline{\dim}_P\,\mu$. Despite this technical obstruction, we cannot think of a compelling reason why such a measure should not exist.

\begin{conjecture} \label{all-d} Theorem \ref{t:prescribe} also holds without the requirement that $d_*=q\Leftrightarrow D_*=q$ and $d^*=q\Leftrightarrow D^*=q$.\end{conjecture}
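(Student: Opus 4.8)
The plan is to reduce the conjecture to one missing family of ``building block'' measures and then to build that family. Since, for a finite (or countable) sum of measures, $\underline{\dim}_H(\sum_i c_i\mu_i)=\inf_i\underline{\dim}_H\mu_i$ and $\overline{\dim}_H(\sum_i c_i\mu_i)=\sup_i\overline{\dim}_H\mu_i$, and likewise for packing dimension (the upper identity using stability of $\dim_P$ under finite unions together with the fact that $(\sum c_i\mu_i)(\XX\setminus E)=0$ iff every $\mu_i(\XX\setminus E)=0$), a sum of two doubling measures with ``diagonal'' profiles $(d_*,d_*,D_*,D_*)$ and $(d^*,d^*,D^*,D^*)$ realizes the quadruple $(d_*,d^*,D_*,D^*)$. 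Theorem \ref{t:prescribe} already provides a doubling measure with any diagonal profile $(a,a,c,c)$, $0<a\le c<q$, and an Ahlfors $q$-regular measure has profile $(q,q,q,q)$. Since every quadruple forbidden by the hypothesis of Theorem \ref{t:prescribe} has $D^*=q$, the conjecture follows once we also produce, for each $\delta\in(0,q)$, a doubling measure $\mu$ on $\XX$ with $\underline{\dim}_H\mu=\overline{\dim}_H\mu=\delta$ and $\underline{\dim}_P\mu=\overline{\dim}_P\mu=q$.

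To build $\mu$, I would run a variant of the construction of $\muse$ (Definition \ref{regime-measures}) on a fixed system of $b$-adic cubes, with $b$ large depending on $\delta$, but split the generations into long consecutive blocks of two types. On a \emph{concentration block} we use ordinary $\muse$-steps with target $\delta$; these place the cube-independent weight $\alpha$ on outer children, so they are harmless for doubling, and on a long such block the strong law of large numbers (as in Lemma \ref{qb-dimension}) drives $\log\mu(Q_k(x))/\log b^{-k}$ toward $\delta$. On a \emph{spreading block} we instead redistribute the mass of each cube proportionally to the ambient Ahlfors $q$-regular measure $\nu$, so that $\mu$ agrees with a scalar multiple of $\nu$ on each cube at the top of the block; because $\nu(B(x,r))\asymp r^q$, on a long spreading block $\log\mu(Q_k(x))/\log b^{-k}$ is forced toward $q$ directly, with no appeal to a law of large numbers (so the inhomogeneity of $\nu$'s conditional ratios does not hurt the \emph{dimension} count). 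Choosing the block lengths to grow fast enough — each block much longer than all preceding ones combined — makes $\liminf_k\log\mu(Q_k(x))/\log b^{-k}=\delta$ and $\limsup_k\log\mu(Q_k(x))/\log b^{-k}=q$ at $\mu$-a.e.\ $x$, giving the stated dimensions. When $\XX=\RR^d$ (so $q=d$), or more generally whenever $\XX$ carries a cube system of constant child count along which $\nu$ distributes uniformly — which, via a self-similar cube system, covers every space in Example \ref{big-list-of-examples}: self-similar sets with the open set condition, the Koch snowflake, snowflaked cubes, and $\mathbb{H}^1$ — every branching ratio used lies in one fixed finite set, the doubling proof of Lemma \ref{qb-doubling} carries over with a single constant, and the conjecture is proved in all these cases.

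The obstacle in the general case is precisely \emph{doubling}. On an arbitrary complete Ahlfors $q$-regular space the spreading step gives a child $R$ of $Q$ the branching ratio $\nu(R)/\nu(Q)$, which is comparable to $b^{-q}$ but genuinely fluctuates with $Q$. In the doubling estimate one compares cubes $R,S$ of a common generation $k$ with $\gap(R,S)\lesssim\side R$ by telescoping their branching ratios back to a common ancestor; the concentration segments of the two chains cancel exactly, but a maximal spreading segment over generations $[a,c]$ contributes the factor $\bigl(\nu(R_c)/\nu(R_{a-1})\bigr)\bigl(\nu(S_{a-1})/\nu(S_c)\bigr)$, which is bounded above and below by constants depending on $\XX$ but is not generally equal to $1$. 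Because forcing the packing dimension up to $q$ requires the running average to return close to $q$ infinitely often, a chain from a deep cube to its common ancestor must cross unboundedly many spreading blocks, so this bounded-but-nonneutral factor compounds and the doubling constant of $\mu$ grows without bound. I expect defeating this accumulation to be the heart of the matter. Two plausible routes: prove the conjecture under the (mild, and satisfied by every example in the introduction) hypothesis that $\XX$ carries an Ahlfors $q$-regular measure whose $b$-adic conditional ratios have oscillation summable over scales — then the telescoped spreading factors multiply to a convergent product and the doubling proof closes; or, more ambitiously, show that every complete Ahlfors $q$-regular space admits a cube system along which some doubling measure assigns all cubes of a given generation comparable mass up to a summable error, which would remove the extra hypothesis. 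Whether the latter is always possible appears to be genuinely open.
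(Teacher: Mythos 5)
You should be clear that the statement you are addressing is a \emph{conjecture}, not a theorem: the paper offers no proof of it, and indeed the paragraph preceding Conjecture~\ref{all-d} states explicitly that the authors ``do not currently know how to build a doubling measure $\mu$ on a complete Ahlfors $q$-regular space such that $\underline{\dim}_H\,\mu=\overline{\dim}_H\,\mu<q=\underline{\dim}_P\,\mu=\overline{\dim}_P\,\mu$.'' Your reduction step is correct and is in fact the same device the paper already uses inside the proof of Theorem~\ref{t:prescribe} (sum two doubling measures with ``diagonal'' dimension profiles), and your bookkeeping that every quadruple excluded by the hypothesis forces $D^*=q$ is right; so you have correctly isolated the missing building block, namely a doubling measure with exact Hausdorff dimension $\delta\in(0,q)$ and exact packing dimension $q$. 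But this is precisely the object the authors say they cannot construct, and your proposal does not construct it either: as you yourself acknowledge in the final paragraph, on a general complete Ahlfors $q$-regular space the ``spreading block'' step assigns a child $R$ of $Q$ the ratio $\nu(R)/\nu(Q)$, which is only comparable to $b^{-q}$ rather than cube-independent, so in the telescoping argument of Lemma~\ref{qb-doubling} the contributions of adjacent chains no longer cancel; since adjacent cubes at depth $k$ may have their common ancestor at a bounded generation, the two chains cross every spreading block in between, the bounded-but-nonneutral factors compound, and the doubling constant is not uniform. That is a genuine, unresolved gap, and it sits exactly at the heart of the conjecture (which quantifies over \emph{all} complete Ahlfors $q$-regular $\XX$, not over spaces carrying a self-similar or constant-branching cube system).

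What your sketch plausibly does deliver is the special case of spaces admitting a cube system with constant child count and uniform subdivision of an Ahlfors regular measure (e.g.\ $\RR^d$ with $b$-adic cubes, where the spreading step gives every child exactly $b^{-d}$ of its parent and the outer-weight cancellation of Lemma~\ref{qb-doubling} survives verbatim, while the block-length argument gives $\liminf=\delta$ and $\limsup=q$ for the $b$-adic local dimensions, hence the stated dimensions via Theorem~\ref{metric-local-dim}); even there you have only a sketch, and related Euclidean results are essentially known (cf.\ the references \cite{Wu-dimension}, \cite{Bylund-Gudayol}, \cite{KRS-cubes} cited in the paper). Your two suggested hypotheses for closing the general case (summable oscillation of the $b$-adic conditional ratios of some Ahlfors regular measure, or the existence of a cube system on which some doubling measure is ``uniform up to summable error'') are reasonable directions, but you give no argument that either can always be arranged, and the second is, as you say, itself open. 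So: the reduction matches the paper's own strategy, the diagnosis of the obstruction is accurate and consistent with the authors' remark about $b\uparrow\infty$ as $s^*\uparrow q$, but the conjecture remains unproved by your proposal.
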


To establish the rectifiability of quasi-Bernoulli measures (see \S\ref{s:rect}), we need two more estimates.
For all $Q\in\Delta$ and for all integers $n\geq 1$ and $0\leq k\leq n$, define \begin{equation} \label{KQ}
K_Q(n,k):=\{R\in\Child^n(Q):\mathsf{cc}(Q,R)\geq n-k\},\end{equation} where $\mathsf{cc}(Q,R)$ is given by \eqref{cc-def}. That is, the collection $K_Q(n,k)$ consists of all $n$-th generation descendants of $Q$ such that at least $n-k$ members in the chain of descendants between $Q$ and $R$ were the central child of their parent. Note that $\#K_Q(n,0)=1$, $K_Q(n,n)=\Child^n(Q)$, and $K_Q(n,k-1)\subset K_Q(n,k)$ for all $1\leq k\leq n$.

\begin{lemma}[measure concentration] \label{l:measure-t} Let $Q\in\Delta$, let $0<t<1$, and suppose that $\muse(R^\downarrow)\geq (1-t)\muse(R)$ for all $R\in\bigcup_{j=0}^{n-1}\Child^j(Q)$, where $\Child^0(Q)=\{Q\}$. If $n$ and $k$ are positive integers and $tn\leq k\leq n$, then \begin{equation}\label{measure-bound-t} \muse\left(\textstyle\bigcup K_Q(n,k)\right)\geq\muse\left(\textstyle\bigcup K_Q(n,k-1)\right)\geq \left(1-\exp\left(-\frac{n}{2}\left[\frac{k}{n}-t\right]^2\right)\right)\muse(Q).\end{equation}\end{lemma}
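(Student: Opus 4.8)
The plan is to recast \eqref{measure-bound-t} as a one-sided large deviation estimate for a sum of conditionally dominated Bernoulli random variables, and then invoke a Hoeffding/Azuma-type bound. The first inequality in \eqref{measure-bound-t} is immediate, since $K_Q(n,k-1)\subseteq K_Q(n,k)$ implies $\bigcup K_Q(n,k-1)\subseteq\bigcup K_Q(n,k)$. For the second inequality, I would begin by recording the elementary additivity identity $\muse(Q)=\sum_{R\in\Child^n(Q)}\muse(R)$, obtained by iterating $\muse(P)=\sum_{R\in\Child(P)}\muse(R)$; the latter is a direct consequence of the normalization $L_P\alpha+(M_P-1)\beta_P+\gamma_P=1$ in \eqref{qb-weights} together with the partition $\Child(P)=\Outer(P)\sqcup(\Inner(P)\setminus\{P^\downarrow\})\sqcup\{P^\downarrow\}$ from Definition \ref{regime-measures}. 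Consequently $\PP(A):=\muse(Q)^{-1}\sum_{R\in A}\muse(R)$ defines a probability measure on the finite set $\Child^n(Q)$, and the mass of $Q\setminus\bigcup_{R\in\Child^n(Q)}R$ is invisible to this computation.

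Next I would set up the probabilistic dictionary. For $R\in\Child^n(Q)$ write $R=P^R_n\subset P^R_{n-1}\subset\cdots\subset P^R_0=Q$ for its chain of ancestors, let $\mathcal{F}_i$ be the $\sigma$-algebra on $\Child^n(Q)$ generated by the map $R\mapsto P^R_i$, and define $Z_i(R):=1-\mathsf{ci}(P^R_i)\in\{0,1\}$ for $1\leq i\leq n$. Then $\mathsf{cc}(Q,R)=n-\sum_{i=1}^n Z_i(R)$, so with $S_n:=\sum_{i=1}^n Z_i$ we have $\bigcup K_Q(n,k)=\{R:S_n(R)\leq k\}$ and hence $\muse\bigl(\bigcup K_Q(n,k)\bigr)=\muse(Q)\,\PP(S_n\leq k)$. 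A short computation, unwinding the conditional expectation over the children of the relevant cube in $\Child^{i-1}(Q)$, shows that $\EE[Z_i\mid\mathcal{F}_{i-1}]$ equals $1-\muse\bigl((P^R_{i-1})^\downarrow\bigr)/\muse(P^R_{i-1})$; the hypothesis $\muse(R^\downarrow)\geq(1-t)\muse(R)$ for all $R\in\bigcup_{j=0}^{n-1}\Child^j(Q)$ therefore gives precisely $\EE[Z_i\mid\mathcal{F}_{i-1}]\leq t$ for $1\leq i\leq n$.

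It then remains to run the standard martingale Chernoff argument. Writing $p_i:=\EE[Z_i\mid\mathcal{F}_{i-1}]\leq t$, Hoeffding's lemma applied conditionally (each $Z_i-p_i$ lies in an interval of length $1$) gives $\EE\bigl[e^{\theta(Z_i-p_i)}\mid\mathcal{F}_{i-1}\bigr]\leq e^{\theta^2/8}$ for $\theta>0$, so by the tower property $\EE\bigl[e^{\theta(S_n-\sum_i p_i)}\bigr]\leq e^{n\theta^2/8}$. Since $\sum_i p_i\leq tn\leq k$, Markov's inequality followed by optimizing $\theta=4(k-tn)/n$ yields $\PP(S_n>k)\leq\PP\bigl(S_n-\sum_i p_i\geq k-tn\bigr)\leq\exp\bigl(-2(k-tn)^2/n\bigr)=\exp\bigl(-2n(k/n-t)^2\bigr)$, which is even smaller than $\exp\bigl(-\tfrac n2(k/n-t)^2\bigr)$ because $k\geq tn$. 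Taking complements and multiplying by $\muse(Q)$ gives \eqref{measure-bound-t}.

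I do not anticipate a genuine obstacle: this is a textbook concentration bound once the dictionary is fixed, and the target exponent $\tfrac n2(k/n-t)^2$ is deliberately lossy, so even a crude Azuma estimate suffices. The two places that require a little care are (i) confirming that the $\muse$-null difference $Q\setminus\bigcup_{R\in\Child^n(Q)}R$ contributes nothing, handled by the additivity identity (or, alternatively, by Lemma \ref{extension}); and (ii) checking that the hypothesis controls the \emph{conditional} means $\EE[Z_i\mid\mathcal{F}_{i-1}]$, not merely the unconditional ones---this is exactly why the martingale (rather than i.i.d.) form of the inequality is the right tool.
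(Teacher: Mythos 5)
Your approach is essentially the paper's: both proofs encode the generation-by-generation central/non-central choices as bounded increments of a martingale and apply an Azuma--Hoeffding-type concentration bound. In fact your version is slightly more careful on one point: the paper centers its partial sums at the \emph{unconditional} means $\EE(Y_j)$, which need not make them an exact martingale (the weight $\gamma_R$ varies from cube to cube), whereas your centering at the conditional means $p_i=\EE[Z_i\mid\mathcal{F}_{i-1}]\leq t$ is exactly the right normalization and even yields the sharper exponent $2n(k/n-t)^2$ in place of $\tfrac n2(k/n-t)^2$. Your accounting of the mass (children of a cube partition it exactly, so $\muse(Q)=\sum_{R\in\Child^n(Q)}\muse(R)$) is also fine.

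The one genuine defect, easily repaired, is an off-by-one in which set you bound. As written you estimate $\PP(S_n>k)$ and conclude the lower bound for $\muse\bigl(\bigcup K_Q(n,k)\bigr)=\muse(Q)\,\PP(S_n\leq k)$; but \eqref{measure-bound-t} asserts this lower bound for the \emph{smaller} set $\bigcup K_Q(n,k-1)=\{S_n\leq k-1\}$, and the first inequality $\muse\bigl(\bigcup K_Q(n,k)\bigr)\geq\muse\bigl(\bigcup K_Q(n,k-1)\bigr)$ transfers estimates in the wrong direction, so your conclusion is strictly weaker than the lemma (it is the $k-1$ form that the paper is set up to use in \S\ref{ss:simple}). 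The fix is one line: run the identical Chernoff--Markov step on the event $\{S_n\geq k\}$ rather than $\{S_n>k\}$. Since $\sum_i p_i\leq tn\leq k$, the event $S_n\geq k$ forces $S_n-\sum_i p_i\geq k-tn$, so $\PP(S_n\geq k)\leq\exp\bigl(-2n(k/n-t)^2\bigr)\leq\exp\bigl(-\tfrac n2(k/n-t)^2\bigr)$, and taking complements gives precisely the claimed bound for $\muse\bigl(\bigcup K_Q(n,k-1)\bigr)$, with the bound for $\bigcup K_Q(n,k)$ then following from monotonicity.
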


\begin{proof} The first inequality in \eqref{measure-bound-t} is trivial and is recorded for the convenience of applying the lemma in \S\ref{s:rect}. As to the main matter, we will derive the second inequality in \eqref{measure-bound-t} from Azuma's inequality, a standard measure concentration estimate for martingales with bounded differences (see e.g.~\cite[top of p.~96]{concentration-survey}).
Let $\mathbb{P}$ denote the Borel probability measure $(\muse(Q))^{-1}\muse\res Q$ and let $\mathbb{E}$ denote its expectation. For each integer $1\leq j\leq n$, define the random variable \begin{equation*}Y_j:=\sum_{R\in\Child^{j}(Q)} \mathsf{ci}(R)\chi_R=\sum_{S\in\Child^{j-1}(Q)}\chi_{S^\downarrow}.\end{equation*} That is, $Y_j$ is the sum of indicator functions for $j$-th generation descendants of $Q$ that are the central child of their parents. By the hypothesis on the measure of central children, we have $1-t\leq \mathbb{E}(Y_j)\leq 1$ for all $j\geq 1$ and $$Q\setminus \textstyle\bigcup K_Q(n,k-1)=\{x\in Q:\sum_{j=1}^{n} Y_j(x)\leq n-k\}.$$ Next, define random variables $X_0\equiv 0$ and $X_j:=\sum_{i=1}^j \left(Y_i-\mathbb{E}(Y_i)\right)$ for all $j\geq 1$. Then the sequence $X_0,X_1,\cdots$ is a martingale with respect to the filtration $(\mathcal{F}_j)_{j=0}^\infty$, where $\mathcal{F}_j$ is the $\sigma$-algebra generated $\Child^j(Q)$. Moreover, for all $j\geq 1$, $c_j:=\|X_{j}-X_{j-1}\|_\infty = \|Y_{j}-\mathbb{E}(Y_{j})\|_\infty\leq 1$. Suppose that $k-nt\geq 0$. Then, by Azuma's inequality, \begin{equation*}\begin{split}
\mathbb{P}\left(\textstyle\sum_{j=1}^{n} Y_j\leq n-k\right)&=\mathbb{P}\left(X_{n}\leq n-k-\textstyle\sum_{j=1}^{n}\mathbb{E}(Y_j)\right)\\
&\leq \mathbb{P}\left(X_{n}-X_0\leq -\left[k-nt\right]\right)\\
&\stackrel{\textrm{Azuma}}{\leq} \exp\left( \frac{-\left[k-nt\right]^2}{2\sum_{j=1}^{n}c_j^2}\right)\leq\exp\left(-\frac{n}{2}\left[\frac{k}{n}-t\right]^2 \right).\end{split}\end{equation*} This yields \eqref{measure-bound-t}.
\end{proof}

\begin{lemma}[cardinality estimate] \label{l:KQ-bound} Let $Q\in\Delta$ and let $n\geq 1$. Suppose that $\hat N$ is a positive number such that \begin{equation}\label{hatN-requirement}\max\left\{N_R:0\leq i\leq n-1,\;R\in\Child^{i}(Q)\right\}\leq \hat N,\quad\text{where }N_R=\#\Child(R).\end{equation}
If $n$ and $k$ are positive integers with $k\leq n\hat N/(\hat N + 1)$, then \begin{equation}\label{KQ-bound}\#K_Q(n,k) \leq \hat N^k\frac{n^n}{k^k(n-k)^{n-k}}\leq b^{n\left((k/n)\log_b(\hat N)+\log_b(2)\right)}\quad\text{for all $b>1$}.\end{equation} \end{lemma}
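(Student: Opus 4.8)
My plan is to reduce \eqref{KQ-bound} to an elementary binomial sum, estimate that sum by the standard exponential‑moment (Chernoff) device, and observe that the hypothesis $k\le n\hat N/(\hat N+1)$ is precisely what keeps the optimal auxiliary parameter admissible. First I would bound $\#K_Q(n,k)$ combinatorially: every $R\in\Child^n(Q)$ is determined by its descending chain $Q=Q_0\supset Q_1\supset\cdots\supset Q_n=R$, and at each step $i$ either $Q_i=Q_{i-1}^\downarrow$ (a ``central'' step) or $Q_i$ is one of the $N_{Q_{i-1}}-1\le\hat N$ non‑central children of $Q_{i-1}$ by \eqref{hatN-requirement}. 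Membership in $K_Q(n,k)$ means $\mathsf{cc}(Q,R)\ge n-k$, i.e.\ at most $k$ of the $n$ steps are non‑central; choosing the number $j\in\{0,\dots,k\}$ of non‑central steps, their positions among the $n$ slots, and a non‑central child at each such step gives $\#K_Q(n,k)\le\sum_{j=0}^k\binom nj\hat N^j$.

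Next, for any $\lambda\in(0,1]$ one has $\lambda^{k}\le\lambda^{j}$ whenever $0\le j\le k$, so $\sum_{j=0}^k\binom nj\hat N^j\le\lambda^{-k}\sum_{j=0}^n\binom nj(\hat N\lambda)^j=\lambda^{-k}(1+\hat N\lambda)^n$. I would then take $\lambda:=k/(\hat N(n-k))$; the hypothesis $k\le n\hat N/(\hat N+1)$ is exactly the inequality $\lambda\le1$, so this choice is admissible (and it is in fact the value that minimizes the right‑hand side). Since $\hat N\lambda=k/(n-k)$ and hence $1+\hat N\lambda=n/(n-k)$, substitution yields $\lambda^{-k}(1+\hat N\lambda)^n=\hat N^k\,n^n/\bigl(k^k(n-k)^{n-k}\bigr)$, which is the first inequality in \eqref{KQ-bound}.

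For the second inequality I would use convexity of $x\mapsto x\log x$: the function $x\mapsto x\log x+(n-x)\log(n-x)$ attains its minimum on $[0,n]$ at $x=n/2$, so $k^k(n-k)^{n-k}\ge(n/2)^n$, whence $n^n/\bigl(k^k(n-k)^{n-k}\bigr)\le 2^n$ and therefore $\hat N^k\,n^n/\bigl(k^k(n-k)^{n-k}\bigr)\le\hat N^k2^n=b^{\,n((k/n)\log_b\hat N+\log_b2)}$ for every $b>1$.

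I do not expect a genuine obstacle here, as the argument is entirely elementary. The only point requiring care is matching the precise algebraic shape of the asserted bound, which is what dictates the specific choice of $\lambda$ and thereby clarifies the role of the hypothesis $k\le n\hat N/(\hat N+1)$; it is also worth noting that this hypothesis forces $1\le k<n$ (for $n\ge2$), so no degenerate cases such as $k=0$ or $k=n$ actually intervene.
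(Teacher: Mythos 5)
Your proposal is correct and follows essentially the same route as the paper: the same binomial bound $\sum_{j=0}^k\binom{n}{j}\hat N^j$, the same exponential-moment optimization (your $\lambda$ is just $1/x$ with the paper's optimal $x_0=\hat N(n-k)/k$, and admissibility $\lambda\le 1$ is exactly the paper's $x_0\ge 1$, i.e.\ the hypothesis $k\le n\hat N/(\hat N+1)$), and the final step $n^n/\bigl(k^k(n-k)^{n-k}\bigr)\le 2^n$ is equivalent to the paper's entropy bound $\epsilon\log_b(1/\epsilon)+(1-\epsilon)\log_b(1/(1-\epsilon))\le\log_b 2$ with $\epsilon=k/n$. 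No gaps; the argument is complete as written.
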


\begin{proof} Let $1\leq k\leq n$. Since $K_Q(n,k)=\bigcup_{i=n-k}^{n} \{R\in\Child^n(Q):\mathsf{cc}(Q,R)=i\}$, for any real-valued $x\geq 1$, we may bound $$\#K_Q(n,k) \leq \sum_{i=n-k}^n\binom{n}{i} \hat N^{n-i}=\sum_{j=0}^k \binom{n}{j}\hat N^j\leq \sum_{j=0}^n \binom{n}{j} \hat N^j x^{k-j}=x^{k-n}(x+\hat N)^n=:f(x).$$ The function $f:(0,\infty)\rightarrow\RR$ has a unique critical point \begin{equation*}x_0=\frac{\hat N(n-k)}{k}\end{equation*} and $x_0\geq 1$ precisely when $k \leq n\hat N/(\hat N+1)$. Since $\lim_{x\rightarrow 0+} f(x)=\lim_{x\rightarrow\infty} f(x)=\infty$, the first upper bound on $\#K_Q(n,k)$ in \eqref{KQ-bound} is achieved by choosing $x=x_0$. The second bound in \eqref{KQ-bound} follows by expanding $k=(k/n)n$, writing $x=b^{\log_b(x)}$, and applying the entropy bound $\epsilon\log_b(1/\epsilon)+(1-\epsilon)\log_b(1/(1-\epsilon))\leq \log_b(2)$ for all $0<\epsilon<1$.
\end{proof}

\begin{remark}Lemma \ref{l:measure-t} requires a lower bound $k\gtrsim_t n$ whereas Lemma \ref{l:KQ-bound} requires an upper bound $k\lesssim_{\hat N} n$.\end{remark}

\section{Proof of Theorem \ref{t:main}}\label{s:rect}

Suppose that $\XX$ is a complete Ahlfors $q$-regular metric space, $m\geq 1$ is an integer with $q>m-1$, and $0<s_H\leq s_P<q$ are real numbers with $m-1<s_P< m$. Without loss of generality, we may assume that $\diam\XX\geq 2.1$. Let $\nu$ be any doubling measure on $\XX$. Let $b\geq 47$ be large and fixed, ultimately depending on at most $q$, $s_H$, $s_P$, and $\XX$. Let $(\Delta_k)_{k\in\ZZ}$ be any system of $b$-adic cubes for $\XX$ (see \S\ref{ss:cubes}). By Lemma \ref{l:AR-count} and Lemma \ref{l:2AR-count}, there exists $C_\XX>1$ such that for all $Q\in\Delta_+=\bigcup_{k=0}^\infty\Delta_k$ and $j\geq 1$, \begin{equation}\label{card-bound} C_\XX^{-1} b^{jq}\leq \#\Child^j(Q)\leq C_\XX b^{jq}\quad\text{and}\quad \#\Inner(Q)\geq C_\XX^{-1} b^q.\end{equation} Let $\mathbf{s}=(s_k)_{k=1}^\infty$ be a sequence of numbers taking values in $[s_H,s_P]$ such that \begin{align}
s_*&=\inf_{k\geq 1}s_k=s_H=\liminf_{k\rightarrow\infty} \frac{1}{k}\sum_{j=1}^k s_j, \label{rect-liminf}\\
s^*&=\sup_{k\geq 1}s_k=s_P=\limsup_{k\rightarrow\infty} \frac{1}{k}\sum_{j=1}^k s_j. \label{rect-limsup}\end{align} Let $\alpha>0$ be small and fixed, ultimately depending on at most $q$, $s_H$, $s_P$, $b$, and $\XX$. Finally, let $\muse$ be the quasi-Bernoulli measure with target dimensions $\mathbf{s}$, background measure $\nu$, and outer weight $\alpha$ as in Definition \ref{regime-measures}. To prove the theorem, we show that \eqref{doubling}--\eqref{bi-lip-vanish} hold with $\mu=\muse$. A lot of the work has been done already. For large enough $b$, the measure $\muse$ is doubling in the sense of \eqref{doubling} by Lemma \ref{qb-doubling} and $\muse$ satisfies \eqref{haus-dim-s} and \eqref{pack-dim-s} by Theorem \ref{metric-local-dim}, Lemma \ref{qb-dimension}, \eqref{rect-liminf}, and \eqref{rect-limsup}. As we already noted in the introduction, $\muse$ is purely $(m-1)$-unrectifiable in the sense of \eqref{unrectifiable}, because $\underline{\dim}_P\,\muse=s_P>m-1$ and Lipschitz images of subsets of $\RR^{m-1}$ have packing dimension at most $m-1$. Finally, we will prove \eqref{m-rectifiable} in \S\ref{ss:simple} and prove \eqref{bi-lip-vanish} in \S\ref{ss:porous}.

\subsection{Rectifiability}\label{ss:simple}
We would like to prove that $\muse$ is  $m$-rectifiable in the sense of \eqref{m-rectifiable}. Since $\Delta_0$ is countable and covers $\XX$, it suffices to fix $Q_0\in\Delta_0$ and prove that $\muse\res Q_0$ is $m$-rectifiable. Further, it is enough to fix $0<\theta<1$ (close to 1) and find a tree of sets $\mathcal{T}=\bigcup_{l=0}^\infty \mathcal{T}_l$ such that $\muse(\leaves(\mathcal{T}))>\theta\muse(Q_0)$ and $\leaves(\mathcal{T})$ lies in the image of some Lipschitz map $f:E\subset[0,1]^m\rightarrow\XX$.

\emph{Definition of $\mathcal{T}$.} For the remainder of \S\ref{ss:simple}, we fix any number $\tau=\tau(q,m,s_P)>0$ satisfying  \begin{equation}\label{tau61} (1+3\tau)\frac{s_P}{m}<1\quad\text{and}\quad a:=(1+2\tau)\frac{s_P}{q}\in\mathbb{Q}\cap(0,1).\end{equation} Write $a=u/v$ in reduced form, i.e.~$u,v\in\ZZ$, $v\geq 1$, and $\mathrm{gcd}(u,v)=1$. Let $n_0\geq v$ be a large integer such that $n_0\equiv 0\pmod{v}$. For all $i\geq 1$, we put $n_i:=in_0$ and $k_i:=an_i$. Since $v$ divides $n_i$, we get that $k_i$ is also an integer. We build $\mathcal{T}$ using induction. Initialize $\mathcal{T}_0=\{Q_0\}$. Then, assuming that the level $\mathcal{T}_j$ of $\mathcal{T}$ has been defined for some $j\geq 0$, define the next level $\mathcal{T}_{j+1}$ by specifying that \begin{equation}\label{next-level} \Child_\mathcal{T}(Q)\cap \mathcal{T}_{j+1} = K_Q(n_{j+1},k_{j+1})\quad\text{for all }Q\in\mathcal{T}_j,\end{equation} with $K_{Q}(n,k)$ as in \eqref{KQ}. Note that \begin{equation}\label{art-side}\side Q = b^{-(n_1+\cdots+n_j)}=b^{-\frac{1}{2}j(j+1)n_0}\quad\text{for any $j\geq 0$ and $Q\in\mathcal{T}_j$}.\end{equation}

\emph{We show that $\leaves(\mathcal{T})$ has significant measure.} In view of \eqref{card-bound}, by taking $b$ to be large enough depending only on $q$, $C_\XX$, and $\tau$ (hence only on $q$, $C_\XX$, $m$, and $s_P$) and $\alpha>0$ to be sufficiently small depending on $q$, $C_\XX$, $b$, and $\tau$ (hence only on $q$, $C_\XX$, $b$, $m$, and $s_P$), we may arrange things so that for all $Q\in\Delta_+$, \begin{equation}\label{art-bound} L_Q\alpha + \frac{s_P-L_Q\alpha\log_b(1/\alpha)}{\log_b(M_Q-1)} \leq C_\XX b^q\alpha+\frac{s_P}{q-\log_b(2C_\XX)} \leq (1+\tau)\frac{s_P}{q}=:t,\end{equation} where $L_Q=\#\Outer(Q)$ and $M_Q=\#\Inner(Q)$. Since $k_{j+1}/n_{j+1}=a=t+\tau s_P/q>t$, from \eqref{gamma-bound},  \eqref{measure-bound-t}, and \eqref{art-bound}, we conclude that for all $j\geq 0$ and all $Q\in\mathcal{T}_j$, \begin{equation}\muse\left(\textstyle\bigcup K_Q(n_{j+1},k_{j+1})\right)\geq \left(1-\exp\left(-n_{j+1}\frac{\tau^2s_P^2}{2q^2}\right)\right)\muse(Q).\end{equation} Thus, by continuity from above, \begin{equation}\label{art-measure}
\muse(\leaves(\mathcal{T})) = \lim_{j\rightarrow \infty} \muse\left(\textstyle\bigcup\mathcal{T}_j\right)=\muse(Q_0)\prod_{i=1}^\infty \left(1-\exp\left(-in_0\frac{\tau^2s_P^2}{2q^2}\right)\right)>0,\end{equation} where the infinite product is positive, since $0< c:= \exp(-n_0\tau^2s_P^2/(2q^2))<1$. As the parameter $n_0\rightarrow\infty$, the number $c\rightarrow 0$ and the infinite product in \eqref{art-measure} tends to $1$. Therefore, by taking $n_0$ to be large enough depending only on $q$, $s_P$, $\tau$, and $\theta$ (hence only on $q$, $m$, $s_P$, and $\theta$), we obtain $\muse(\leaves(\mathcal{T}))\geq \theta\muse(Q_0)$.

\emph{We show that $\leaves(\mathcal{T})$ is contained in a Lipschitz image of $E\subset\RR^m$.} Write $\hat N:=C_\XX b^q$. Taking $b$ to be large enough depending only on $C_\XX$, $q$, $s_P$, and $\tau$ (hence only on $C_\XX$, $q$, $s_P$, and $m$) ensures that $a=(1+2\tau)s_P/q<\hat N/(\hat N+1)$. By Lemma \ref{l:KQ-bound}, for all $j\geq 0$ and $Q\in\mathcal{T}_j$, \begin{equation}\#\Child_\mathcal{T}(Q)=\#K_Q(n_{j+1},k_{j+1}) \leq b^{(j+1)n_0\left((1+2\tau)(s_P/q)\log_b\hat N+\log_b(2)\right)}\end{equation} Note that $\log_b(\hat N)=q+\log_b(C_\XX)$. Increasing the scaling factor $b$ as necessary (depending only on $q$, $C_\XX$, $m$, and $s_P$), we can arrange for  $(1+2\tau)(s_P/q)\log_b(C_\XX)+\log_b(2)\leq \tau s_P$. Then \begin{equation} N_j=\max_{Q\in\mathcal{T}_j}\#\Child_\mathcal{T}(Q) \leq b^{(j+1)n_0(1+3\tau)s_P},\quad D_j=\max_{Q\in\mathcal{T}_j}\diam Q\leq 2C_b\,b^{-\frac{1}{2}j(j+1)n_0}.\end{equation} Hence \begin{equation}\label{check-S}
\check S=\sum_{j=0}^\infty \left(\prod_{i=0}^j N_i^{1/m}\right)D_j\leq 2C_b\sum_{j=0}^\infty b^{\frac12 n_0\left[(j+1)(j+2)(1+3\tau)(s_P/m)-j(j+1)\right]}.\end{equation} Because $\eta:=1-(1+3\tau)s_P/m>0$, see \eqref{tau61}, the expression in square brackets $$[\cdots]\leq -\eta j^2+O(j)\leq -\frac{1}{2}\eta j^2\quad\text{for }j\gg_\eta 1.$$ (Here the big-$O$ notation means $O(j)\lesssim j$ for all $j\geq 1$.) Thus, we certainly know that $\check S<\infty$. We are almost ready to invoke Theorem \ref{t:pack}.

To proceed, note that for any $j\geq 0$ and any $Q\in\mathcal{T}_j$, we can bound \begin{equation}N_j\geq \#\Child_\mathcal{T}(Q)=\# K_Q(n_{j+1},k_{j+1})\geq \#\Child_{\Delta}^{k_{j+1}}(Q)\geq C_\XX^{-1}b^{(j+1)n_0(1+2\tau)s_P},\end{equation} where the final inequality is by \eqref{card-bound}. (The penultimate inequality holds, because there is a bijection between $\Child_{\Delta}^{k_{j+1}}(Q)$ and the set of all $R\in\Child_\Delta^{n_{j+1}}(Q)$ such that $\mathsf{ci}(R)=1$ and whose $n_{j+1}-k_{j+1}-1$ immediate ancestors $P$ also satisfy $\mathsf{ci}(P)=1$.) It follows that $C=\sum_{j=0}^\infty N_j^{-1/m} <\infty$, and thus, in addition to $\check S<\infty$, we have \begin{equation}S=\sum_{j=0}^\infty \left(\prod_{i=0}^j \lceil N_i^{1/m}\rceil\right)D_j\leq e^C\check S<\infty \end{equation} by Remark \ref{r:no-ceil}. Therefore, by Theorem \ref{t:pack}, there exists a compact set $E\subset\RR^m$ and a Lipschitz map $f:E\rightarrow\XX$ such that $f(E)\supset \leaves(\mathcal{T})$. This completes the proof of \eqref{m-rectifiable}.

\subsection{Doubling measures on \texorpdfstring{$\RR^d$}{Rd} vanish on porous sets}\label{ss:porous} To complete the proof of the theorem, all that remains is to verify \eqref{bi-lip-vanish}. The argument is now standard. Assume that $\XX=\RR^d$ and $1\leq m\leq d-1$. Let $\mu$ be any doubling measure on $\RR^d$ with full support. If $g:\RR^m\rightarrow\RR^d$ is a bi-Lipschitz embedding, then $\Sigma=g(\RR^m)$ is $m$-Ahlfors regular. In particular, $\dim_A \Sigma=m\leq d-1$. More generally, suppose that $\Sigma\subset\RR^d$ is any set with $\dim_A \Sigma<d$. By \cite[Theorem 5.2]{Luukk}, $\Sigma\subset\RR^d$ and $\dim_A \Sigma<d$ imply that $\Sigma$ is porous in the sense that there exists $0<\epsilon<1$ such that \begin{quotation}for all $x\in \Sigma$ and $r>0$, there exists a ball $B(y,\epsilon r)\subset B(x,r)$ such that $B(y,\epsilon r)\cap \Sigma=\emptyset$. \end{quotation} Let $x\in\Sigma$, let $r>0$, and let $B(y,\epsilon r)$ be the ball given by the porosity condition. Since $\mu$ is doubling and $y$ is in the support of $\mu$ (simply because $\spt\mu=\RR^d$), we have $\mu(B(x,r)\setminus\Sigma)\geq \mu(B(y,\epsilon r))\gtrsim \mu(B(y,2r))\geq \mu(B(x,r))$, where the implicit constant depends only on $\epsilon$ and the doubling constant for $\mu$. Thus, on the one hand, \begin{equation} \label{zero-density-at-x} \lim_{r\downarrow 0} \frac{\mu(B(x,r)\setminus\Sigma)}{\mu(B(x,r))}=0\end{equation} fails at every $x\in\Sigma$. On the other hand, by the Lebesgue-Besicovitch differentiation theorem for Radon measures (applied to the characteristic function $\chi_{\RR^d\setminus\Sigma}$), we know that \eqref{zero-density-at-x} holds at $\mu$-a.e.~$x\in\Sigma$ (see e.g.~\cite[Corollary 2.4]{Mattila}). Therefore, $\mu(\Sigma)=0$.


\section*{Acknowledgments} 
Research for this project began in 2020, during the early days of the COVID-19 pandemic. The authors thank their family members---Matt, Maya, Naomi, and Mikael---for patience with mathematical discussions over online meetings from home. We also thank Pablo Shmerkin and Boris Solomyak for explaining \eqref{limsup-infinity} and their help in locating references. Finally, the authors are grateful to the referee for their comments on an earlier draft of this manuscript.


\providecommand{\MR}{\relax\ifhmode\unskip\space\fi MR }
\providecommand{\MRhref}[2]{%
  \href{http://www.ams.org/mathscinet-getitem?mr=#1}{#2}
}
\providecommand{\href}[2]{#2}


\begin{dajauthors}
\begin{authorinfo}[badger]
  Matthew Badger\\
  University of Connecticut\\
  Storrs, Connecticut, USA\\
  matthew\imagedot{}badger\imageat{}uconn\imagedot{}edu \\
  \url{https://tangentmeasure.com/}
\end{authorinfo}
\begin{authorinfo}[schul]
  Raanan Schul\\
  Stony Brook University\\
  Stony Brook, New York, USA\\
  raanan\imagedot{}schul\imageat{}stonybrook\imagedot{}edu\\
  \url{https://www.math.stonybrook.edu/~schul/}
\end{authorinfo}
\end{dajauthors}

\end{document}